\newcommand{\bfx}{{\bf x}}
\newcommand{\bfX}{{\bf X}}
\newcommand{\ud}{\mathrm{d}}
\begin{document}
\title{Asymptotic gradient flow structures of a nonlinear Fokker--Planck equation\thanks{Received date \today}}

\author{Maria Bruna\thanks{Mathematical Institute, University of Oxford, Oxford OX2 6GG, UK (bruna@maths.ox.ac.uk).}
\and 
Martin Burger\thanks{Institut f\"ur Numerische und Angewandte Mathematik and Cells in Motion Cluster of Excellence, Westf\"alische Wilhelms Universit\"at M\"unster, Einsteinstrasse 62, 48149 M\"unster, D (martin.burger@wwu.de).}
\and
Helene Ranetbauer\thanks{University of Vienna, Faculty of Mathematics, Oskar-Morgenstern-Platz 1, 1090 Wien, AT (helene.ranetbauer@univie.ac.at).}
\and
Marie-Therese Wolfram\thanks{University of Warwick, Coventry CV4 7AL, UK and RICAM, Austrian Academy of Sciences, Altenbergerstr. 66, 4040 Linz, AT (m.wolfram@warwick.ac.uk).}
}

\pagestyle{myheadings} \markboth{ASYMPTOTIC GRADIENT FLOWS OF A FOKKER--PLANCK EQUATION}{M. BRUNA, M. BURGER, H. RANETBAUER AND M.T. WOLFRAM} \maketitle

\begin{abstract}
   In this paper we consider a nonlinear Fokker--Planck equation with asymptotically small parameters. It describes the diffusion of finite-size particles in the presence of a fixed distribution of obstacles in the limit of low-volume fraction. The equation does not have a gradient flow structure, but can be interpreted as an asymptotic gradient flow, that is, as a gradient flow up to a certain asymptotic order. We use this scalar equation as a simple testbed model for more complicated systems of this kind. We discuss several possible entropy-mobility pairs, illustrate their dynamics with numerical simulations, present global in time existence results and study the long time behavior of solutions. 
\end{abstract}
\begin{keywords} nonlinear parabolic equation, interacting particle systems, asymptotic expansions, volume exclusion, entropy techniques and gradient flow
structure, exponential convergence
\end{keywords}

\begin{AMS}
35K55, 35Q84, 35C20, 35A01, 35B40
\end{AMS}

\section{Introduction.} \label{sec:introduction}

In this paper we study the solution of a nonlinear Fokker--Planck equation of the form
\begin{align}\label{pme_d}
\partial_t r &= \nabla  \cdot \left[   ( 1 +
 \varepsilon_1   r -\varepsilon_2 b) \nabla {  r} + \varepsilon_3 r \nabla { b}  \right],   \qquad t>0, \bfx \in \Omega,
\end{align}
where $r = r(\bfx, t)$ describes the density of interacting particles diffusing through a porous medium represented by a fixed porosity density 
$b(\bfx)$. Here $\varepsilon_i >0$ are (small) parameters related to the excluded-volume interactions and $\Omega \subset \mathbb R^d$ with $d=2,3$. 
Equation \eqref{pme_d} was derived as the macroscopic limit of a microscopic system of two types of interacting particles, namely diffusing (Brownian) red and immobile (obstacle) blue particles using an asymptotic method in the limit of low volume fraction \cite{Bruna:2015eh}. While the microscopic system has a natural gradient flow (GF) structure, this is lost in the macroscopic equation \eqref{pme_d}. In this paper we are interested in a framework for generalizing the concept of GFs to extend to equations such as \eqref{pme_d}. 
 
A GF structure provides a natural framework to study the behavior of solutions \cite{AGS2008}, and as such it is a desirable feature for the macroscopic-level partial differential equation (PDE). In general, the GF structure of a conservation law is given by 
\begin{equation}\label{e:gf}
\partial_t r=\nabla \cdot \left[m(r)\nabla \frac{\delta E}{\delta r}(r)\right],
\end{equation}
where $E$ is an entropy and $m$ a mobility. Many well known PDEs can be written in this form, including the porous medium \cite{vazquez2007} or the 
linear Fokker--Planck equation \cite{JKO1998}. The decay of the entropy functional along its solutions provides useful estimates to study global in time existence and the long time behavior of 
solutions. GF techniques as well as entropy methods have been used successfully to understand the structure 
of many nonlinear PDEs and their qualitative behavior \cite{jungel2015boundedness, MV99,zamponi2015analysis,ZM2015}.  
The interpretation of \eqref{e:gf} as a GF with respect to the Wasserstein metric was first analyzed by Jordan, Kinderlehrer and Otto in the case of the diffusion equation  
\cite{JKO1998}. This connection initiated a lot of research in the optimal transportation community, see for example \cite{liero2013gradient,L2009}. 

More recently the connection between the 
time-discrete Wasserstein flow formulation of the diffusion equation and large deviation principles was established by Adams and co-workers in \cite{adams2013,adams2011}. 
This limiting passage from the microscopic to the macroscopic level is only well understood under certain scaling assumptions on the number and size of particles. These assumptions do not hold in the case of finite-size particles as is the case of \eqref{pme_d}. The only rigorous results available are restricted to spatial dimension one, see for example \cite{Rost1984}. Different formal approaches, for example the derivation from a simple exclusion process on a discrete lattice using Taylor expansion, were presented and analyzed in \cite{burger:2015uk,burger2012nonlinear,painter2009}. The derived macroscopic equations differ to the ones for off-lattice diffusion obtained using the methods of matched asymptotics \cite{bruna2012diffusion,Bruna:2012cg}, and in some cases they may also not exhibit a full GF structure.

The lack of GF structure at the macroscopic level can result from the nature of the approximations made when passing from the particle level to the continuum. This is the case of some models such as \eqref{pme_d}. Specifically, \eqref{pme_d} was derived using the method of matched asymptotics with a small parameter $\varepsilon>0$ (such that $\varepsilon_i / \varepsilon$  for $i=1, 2, 3$ are all order one), with terms computed up to order $\varepsilon$. The truncation of the expansion at a given order in the small parameter $\varepsilon$ can result in the loss of the GF structure, and motivated our definition of an asymptotic gradient flow (AGF) \cite{bruna2016cross}. 
\begin{definition}[Asymptotic gradient flow structure]\label{def:agf}
Consider a conservation law given as an asymptotic expansion in $\varepsilon >0$ up to order $k$,
\begin{align}\label{agm_intro}
\partial_t r = \nabla \cdot [F(r;\varepsilon)] = \nabla \cdot \left [ F^{(0)}(r) + \varepsilon  F^{(1)}(r) + \cdots + \varepsilon^k F^{(k)}(r) \right].
\end{align}
Then we call \eqref{agm_intro} an asymptotic gradient flow (AGF) at order $\varepsilon^k$ if we can find a mobility $m$ and entropy $E$ such that 
\begin{align*}
m(r;\varepsilon) \nabla \frac{\delta E}{\delta r}(r;\varepsilon) = F(r;\varepsilon)  + O(\varepsilon^{k+1}). 
\end{align*}	
\end{definition}
For example, in our case of interest, \eqref{pme_d} is an equation up to order $\varepsilon$ ($k=1$). Note that GF structures are always preserved at order $k=0$, in our case this is rather trivial since we would arrive at the linear diffusion equation.

In this work we continue our efforts to develop an analytic framework for this kind of equations 
by considering a special case of interacting species: hard-sphere particles diffusing through immobile obstacle particles. The resulting equation \eqref{pme_d} is a nonlinear scalar Fokker--Planck equation that lacks a full GF structure. Its existence and uniqueness can be studied using classical existing 
results for scalar nonlinear PDEs, see for example \cite{ladyzhenska}. However, here we are interested in using \eqref{pme_d} as a testbed for employing different AGF structures and generalizing GF techniques. In particular, we want to explore how one can exploit the AGF structure to understand properties of its solutions such as existence, uniqueness, or long-time behavior. Moreover, we want to understand how to choose a given AGF structure. 
The dynamic and equilibration behavior of a full GF structure is fully determined by its entropy and mobility. In contrast,  for an AGF this is only true up to a certain order. This additional freedom allows us to use different entropy-mobility pairs for analyzing the behavior of solutions. We discuss how the various choices give different global in time existence results as well as equilibrium solutions. A natural question that arises in this context is the what is the "best" choice of an entropy if there are multiple ones, an issue we try to tackle in our case by comparison of dynamics with simulations of the original stochastic particle systems used to derive the asymptotic model.

This paper is organized as follows: we introduce the mathematical model in Section \ref{sec:model} and discuss alternative GF structures that are AGFs of \eqref{pme_d}. In Section \ref{sec:numerics} we present several numerical examples to illustrate the behavior of the solutions to \eqref{pme_d} and the GFs presented in the previous section. We also compare the solutions of the different models with Monte Carlo simulations of the underlying system of interacting particles, to explore which AGF structure might represent the underlying system best. In Section \ref{sec:GF_analysis} we consider two of the GF structures and study their advantages and disadvantages for the analysis of their solutions. In particular, we discuss existence and long time behavior. Equation \eqref{pme_d} is then discussed in Section \ref{sec:AGF_analysis}, where we present a global existence result. Finally, we conclude in Section \ref{sec:conclusion} with a summary of our results concerning the particular model \eqref{pme_d} and a discussion of the open questions and challenges for the analysis and numerical solution of AGF structures in general. 

\section{Model and Asymptotic Gradient Flow Structures.} \label{sec:model}

Below we summarize the assumptions leading to the nonlinear Fokker--Planck equation \eqref{pme_d}, as they will influence our choices of functional spaces and set of solutions later on. 
The equation is obtained as the macroscopic limit of a stochastic system with two types of particles: $N_r$ diffusing red particles of diameter $\epsilon_r$ (with diffusion coefficient normalized to one) and $N_b$ fixed blue obstacle particles of diameter $\epsilon_b$. The particles interact via hard-core collisions, which means that their centers $\bfX_i$ cannot get closer than the sum of their radii, $\| \bfX_i - \bfX_j \| \ge (\epsilon_i + \epsilon_j)/2$, where $\epsilon_i$ denotes the diameter of the particle at position $\bfX_i$. Accordingly, the red particles diffuse in a ``perforated domain'' with obstructions of diameter $\epsilon_{rb} = (\epsilon_r + \epsilon_b)/2$, distributed according to the probability density $b(\bfx)$. This is related to the porosity $\phi_b(\bfx)$ by
\begin{equation} \label{porosity_b}
\phi_b(\bfx) = N_b v_d(\epsilon_b) b(\bfx),	
\end{equation}
where $v_d(\epsilon)$ is the volume of a $d$-dimensional ball of diameter $\epsilon$. The population of red particles is characterized by the one-particle marginal probability density $r(\bfx, t)$, which describes the probability that one of the red particles is at position $\bfx$ at time $t$. The number density (describing the probability of finding \emph{any} of the red particles at a given position) is given by  $N_r r(\bfx, t)$, and the volume concentration of red particles is given by 
\begin{equation} \label{conc_r}
	\phi_r(\bfx,t) = N_r v_d(\epsilon_r) r(\bfx,t). 
\end{equation}

The procedure adopted in \cite{Bruna:2015eh} to derive the equation \eqref{pme_d} for $r$ is a systematic asymptotic expansion for  $N_b v_d(\epsilon_b) + N_r v_d(\epsilon_r) \ll 1$. 
Specifically, the parameters in \eqref{pme_d} are given by\footnote{We note there was a typo in Eq. (3.6b) of \cite{Bruna:2015eh}: there should have been a $2^{d+1}$ in the numerator of the $p(\bfx)$ term, rather than a $2^{d-1}$ in the denominator.}
\begin{equation} \label{deltas}
	\varepsilon_1 = 4(N_r-1) (d-1)   v_d(\epsilon_r), \qquad \varepsilon_2 = 4 N_b v_d(\epsilon_{rb}), \qquad \varepsilon_3 = (d-1)\varepsilon_2.
\end{equation}
Taking the particle diameters to be of a similar order, $\epsilon_r \sim \epsilon_b \sim \epsilon$, and the number of red particles large such that $N_r -1 \sim N_r$, we can write
\begin{equation*}
	\varepsilon_1 + \varepsilon_3 \sim 4 (d-1) \left [N_r v_d(\epsilon_r) + N_b v_d(\epsilon_b) \right] \sim 4(d-1) (N_r+N_b) v_d(\epsilon),
\end{equation*}
that is, $\varepsilon_1 + \varepsilon_3$ is approximately $4(d-1)$ times the volume fraction occupied by particles.

Due to the finite size of particles, there is a physical constraint on the total volume occupied by particles, $\Phi = [N_b v_d(\epsilon_b) + N_r v_d(\epsilon_r)]/|\Omega|$,  as well as a constraint on the local volume concentration
\begin{equation}
	\phi (\bfx, t) = \phi_b(\bfx) + \phi_r(\bfx, t).
\end{equation}
In particular, we have that $\Phi < \Phi_d^*$, where $\Phi_d^*$ is the maximal packing density in $d$ dimensions, $\Phi_2^* \approx 0.91$ and $\Phi_3^* \approx 0.74$.
We note that this physical constraint is not present in \eqref{pme_d}, that is, one could in principle solve \eqref{pme_d} for a very large concentration of red particles without running into mathematical problems. However, since this equation represents the physical system described above, in our analysis we will impose that its solutions remain in a set of ``feasible'' configurations. In particular, our bound for the maximum allowed density will be much lower than $\Phi_d^*$, so that we stay within the region of validity of  the model: we will require $\varepsilon_1 r + \varepsilon_3 b \le 1$, so that the total volume fraction for $d= 2$ is under 25\% (see Section \ref{sec:GF_analysis}).

Equation \eqref{pme_d} can be seen as a particular case of the model derived in \cite{bruna2012diffusion} and analyzed in \cite{bruna2016cross}, consisting of a cross-diffusion system of interacting particles in which both species are allowed to diffuse in the domain under an external potential. When the red moving particles are point particles ($\varepsilon_1 \equiv 0$), one recovers a linear diffusion-advection equation in a porous medium with porosity $\phi_b$. When the obstacles are removed ($\varepsilon_2 = \varepsilon_3 \equiv 0$), equation \eqref{pme_d} is a nonlinear diffusion equation describing the enhanced diffusion due to self-excluded volume \cite{Bruna:2012cg}.

\subsection{Alternative asymptotic gradient flow structures.} \label{sec:AGF}
As mentioned in the introduction, \eqref{pme_d} does not have a formal gradient flow (GF) structure. However, writing $\varepsilon_i \sim \varepsilon$ for all $i$, we can see \eqref{pme_d} as an asymptotic expansion in $\varepsilon$ up to order one, and express it as an asymptotic gradient flow (AGF)
\begin{align}\label{pme_gf}
\partial_t r &= \nabla \cdot \left[m(r; \varepsilon )\nabla\frac{\delta E}{\delta r} (r;\varepsilon)+  f(r,\nabla r; \varepsilon)\right],\qquad f = O(\varepsilon^2).
\end{align}

Note that since equation \eqref{pme_d} was derived as an asymptotic expansion, formally there should be a term $ O(\varepsilon^2)$ added to the right-hand side. This means we do not know what the second-order term is: it could be zero, or it could be anything else. This is why, in the definition of AGF, the higher-order term $f$ can be arbitrary as long as it is of order $\epsilon^2$ (see Definition \ref{def:agf}). In what follows we discuss several possible entropy-mobility pairs that induce an AGF of the form \eqref{pme_gf} for \eqref{pme_d}. These obviously also induce a GF (corresponding to having $f= 0$ in \eqref{pme_gf}). With a slight abuse of language, throughout this paper will refer to equation \eqref{pme_d} as the AGF to distinguish it from the full GFs. But it should be clear that it is just a choice among infinitely many, that correspond to setting the $ O(\varepsilon^2)$ to zero in \eqref{pme_d}.

For ease of notation, we omit the dependence of $m$ and $E$ on $\varepsilon$ from now on.
The first entropy-mobility pair $(E_1,m_1)$ is
\begin{subequations}
	\label{pair1}
	\begin{align} \label{entropy}
E_1(r) &=\int_{\Omega} \left[r(\log r - 1)+\frac{1}{2}\varepsilon_1 r^2+\varepsilon_3 rb \right] \ud \bfx,\\
 \label{mobility}
m_1(r) &= r (1-\varepsilon_2 b),
\end{align}
with the  higher-order term in $\varepsilon$ given by
\begin{equation} \label{hot_f}
f_1(r,\nabla r)=rb (\varepsilon_1 \varepsilon_2\nabla r+\varepsilon_2 \varepsilon_3 \nabla b).
\end{equation}
\end{subequations}
The corresponding entropy variable is
\begin{equation} \label{ent_var1}
	u_1 = \frac{\delta E_1}{\delta r} = \log r + \varepsilon_1 r + \varepsilon_3 b,
\end{equation}
and $m_1 \nabla u_1 =  ( 1 + \varepsilon_1 r - \varepsilon_2 b) \nabla {  r} + \varepsilon_3 r \nabla { b} + O(\varepsilon^2)$ as required. 

The equilibrium solution $r_\infty$ of the full GF (corresponding to $f\equiv 0$  in \eqref{pme_gf}) can be found as the minimizer of the entropy $E$, and it agrees with the stationary solution $r_*$ of \eqref{pme_gf} up to order $\varepsilon$. To find the equilibrium solution $r_{1,\infty}$ corresponding to the first entropy $E_1$, we look for an asymptotic expansion $r_{1,\infty} \sim r^{(0)} + \varepsilon r^{(1)} + \cdots$. Setting $u_1 = \chi$ constant in \eqref{ent_var1} and using the normalization condition on $r$ we obtain, at leading order, $r^{(0)} = 1$. At order $\varepsilon$, we find $
r^{(1)} + \bar \varepsilon_1 + \bar \varepsilon_3 b = \chi_1$, where $\bar \varepsilon_i := \varepsilon_i / \varepsilon$ and $\chi_1$ is a constant. Thus, $r^{(1)} = \tilde \chi_1 - \bar \varepsilon_3 b$ or $r^{(1)} =  \bar \varepsilon_3 (1- b)$ after using again the normalization condition ($\int r^{(1)} = 0$). Therefore, we find that the equilibrium solution of the first GF pair is
\begin{equation} \label{rinf1}
	r_{1,\infty} = 1 + \varepsilon_3 (1-b) + O(\varepsilon^2).
\end{equation}

As mentioned before, the lack of GF structure at the macroscopic level is due to the fact that \eqref{pme_d} corresponds to an asymptotic expansion truncated at a given order. One could argue that this should be resolved by calculating the next order terms in the expansion, and that these should coincide with \eqref{hot_f}. However, $f_1$ only contains some of the possible order $\varepsilon^2$ terms (corrections to pairwise interactions), but in general also the first-order correction to triplet interactions should be at that order. In other words, in general calculating the next order term in \eqref{pme_d} would force us to write down expansions up to order $\varepsilon^2$ for the mobility and the entropy, potentially leading to new terms of order $\varepsilon^3$ in \eqref{hot_f}. 

In terms of an AGF structure, the question of the ``right'' entropy functional and mobility matrix arises. Indeed, just as with standard GF where one sometimes can define more than one entropy-mobility pair, is the AGF non-uniquely defined? To illustrate this consider a second entropy-mobility pair $(E_2, m_2)$
\begin{subequations}
	\label{pair2}
\begin{align}\label{entropy2}
E_2(r)&=\int_\Omega r\left[\log\left(\frac{r}{1-\varepsilon_1 r -\varepsilon_3 b}\right)-1\right] \,\ud \bfx,\\
\label{mobility2}
m_2(r)&=r(1-\varepsilon_1 r-\varepsilon_2 b).
\end{align}
The higher-order term in $\varepsilon$ is given by
\begin{equation}\label{f_GF2}
	f_2(r, \nabla r) = -\varepsilon_1 r^2 \left( \varepsilon_1 \nabla r + \varepsilon_3 \nabla b \right) +  (\varepsilon_2 - \varepsilon_3) r b \left( 2 \varepsilon_1 \nabla r + \varepsilon_3 \nabla b \right)  + O(\varepsilon^3). 
\end{equation}
\end{subequations}
In this case we have the following entropy variable
\begin{align} \label{ent_var2}
u_2 &= \frac{\delta E_2}{\delta r}=\log r -\log (1-\varepsilon_1 r-\varepsilon_3 b) +\frac{\varepsilon_1 r}{1-\varepsilon_1 r-\varepsilon_3 b},
\end{align}
and as before we find that $ m_2 \nabla u_2 =   ( 1 + \varepsilon_1   r -\varepsilon_2 b) \nabla {  r} + \varepsilon_3 r \nabla { b} + O(\varepsilon^2)$. 
This implies that the second pair \eqref{pair2} is also an AGF to equation \eqref{pme_d}, and that errors between using a GF representations with $(E_1, m_1)$, $(E_2, m_2)$, or the AGF \eqref{pme_gf} will be, at most, at $O(\varepsilon^2)$. Does the same hold when considering the equilibrium solutions of the GFs or the stationary solution of the AGF? The equilibrium solution $r_{2,\infty}$ corresponding to the second pair \eqref{pair2} is given by imposing $u_2 = \chi$ constant, which expanding \eqref{ent_var2} asymptotically leads to 
\begin{equation} \label{u2_expanded}
	u_2 \sim \log r_{2,\infty} + 2\varepsilon_1 r_{2,\infty} + \varepsilon_3 b = \chi. 
\end{equation}
Although it may seem that there is a difference at $O(\varepsilon)$ since the terms with $\varepsilon_1$ between $u_1$ and $u_2$ differ (compare \eqref{u2_expanded} with \eqref{ent_var1}), that is not the case:  note that the term with $\varepsilon_1$ in \eqref{ent_var1} does not affect the equilibrium solution \eqref{rinf1} up to $O(\varepsilon)$. For the same reason, the equilibrium solution $r_{2,\infty}$ of the second entropy agrees with that of the first entropy, \eqref{rinf1}, up to order $O(\varepsilon)$.

This motivates the following definition.
\begin{definition}
Two AGF structures defined by a entropy-mobility pair $(E_i, m_i)$  and entropy variables $u_i = \delta E_i/\delta r$ are equal up to order $\varepsilon^k$, if
\begin{enumerate}[(1)]
\item the asymptotic expansions $m_i \nabla u_i$ are equal up to order $\varepsilon^k$.
\item the asymptotic expansions of their corresponding stationary solutions, found setting $u_i = \chi_i$ constant, are equal up to order $\varepsilon^k$.\end{enumerate}  
\end{definition}

Given these observations, we can define the following  family of AGFs up to $O(\varepsilon)$ for equation \eqref{pme_d}:
\begin{subequations}
	\label{pair_gen}
	\begin{align}\label{entropy_gen}
E_3(r)&=\int_\Omega r\left[\log\left(\frac{r}{1-\alpha \varepsilon_1 r - \varepsilon_3 b}\right)-1\right] \,\ud \bfx,\\
m_3(r)&=r(1- \beta \varepsilon_1 r -  \varepsilon_2 b),
\end{align}
\end{subequations}
with $2 \alpha - \beta = 1$. So for example we could choose to have a mobility as in the first pair \eqref{mobility}, setting $\beta=0$ and giving $1-\varepsilon_1 r/2 -\varepsilon_3 b$ in the denominator of the log term in the entropy. 

A key feature of the Fokker--Planck equation \eqref{pme_d} that allowed us to find several entropy-mobility pairs is that the drift term is at order $\varepsilon$. It is for this reason that the stationary solution is constant at leading order and the coefficient $\varepsilon_1$ of the nonlinear diffusion does not enter the solution until second order (see \eqref{rinf1}). Below we discuss how the situation would change if instead the potential term $\nabla b$ appeared at leading order in \eqref{pme_d}. 
Suppose we have an equation of the form 
\begin{equation*}
\partial_t r=\nabla \cdot \left\{ \big[d^{(0)}(r) + \varepsilon d^{(1)}(r) + \cdots \right]\nabla r + \left[c^{(0)}(r) + \varepsilon c^{(1)}(r) + \cdots \big] \nabla b(\bfx) \right \}, 
\end{equation*}
with $c^{(0)}(r) \ne 0$. We look for an AGF structure with expansions for mobility and entropy as follows
\begin{align*}
	m(r) = m^{(0)}(r) + \varepsilon m^{(1)}(r) + \cdots,\qquad 
	e(r) = e^{(0)}(r) + \varepsilon e^{(1)}(r) + \cdots + \int r b ~\ud \bfx.
\end{align*}
Note that the last term in $e(r)$ is the natural potential energy term given a drift $\nabla b$.
We immediately obtain through the potential term that
\begin{equation*}
m^{(i)} = c^{(i)}, \quad i \ge 0.
\end{equation*}
Then, having $m^{(0)}$ fixes in turn $e^{(0)}$ through the relation
\begin{equation*}
m^{(0)} \nabla \frac{\delta e^{(0)}}{\delta r}= d^{(0)} \nabla r.
\end{equation*}
Comparing the first order coefficients, we obtain
\begin{equation*}
m^{(1)} \nabla \frac{\delta e^{(0)}}{\delta r} + m^{(0)} \nabla \frac{\delta e^{(1)}}{\delta r} = c^{(1)} \nabla r,
\end{equation*}
and similarly for the higher-order terms. Thus, we see that in the case of an order one potential term, there is less flexibility in choosing the mobility and entropy of an AGF.

We have seen that there are several possible entropy-mobility pairs resulting in different AGFs for equation \eqref{pme_d}.
The first pair \eqref{pair1} had the natural entropy, with the quadratic terms coming from the pairwise interactions accounted for in \eqref{pme_d} and the natural potential energy term. But a somewhat unnatural mobility, since it is independent of the crowding of red particles. 
The second pair \eqref{pair2} has instead a more natural mobility, which is reduced by crowding from both species, and an entropy with the advantage that gives the correct physical bounds on the density. In the next section we use numerical simulations of \eqref{pme_d} and the GFs induced by the first two pairs to highlight their differences. Then in Section \ref{sec:GF_analysis} we will demonstrate the advantages of each pair from the analysis perspective.

\section{Numerical simulations.} \label{sec:numerics}

We start with a numerical investigation of the behavior of solutions of the AGF equation \eqref{pme_d} and associated GF equations of the form \eqref{e:gf} with the entropy-mobility pairs discussed in the previous section. To assess the accuracy of the models, we compare the numerical solutions of the PDEs with stochastic simulations of the underlying microscopic model. In particular, we are interested in comparing the decay in relative entropy in each of the models. 

\begin{definition}
The relative entropy functional is defined by
\begin{align} \label{bregman0}
E^*(r(t)) = E(r(t)) - E(r_*) - \int_\Omega u(r_*) (r(t)-r_*) \, \ud \bfx,
\end{align}
where $u = \delta E/\delta r$ is the entropy variable and $r_*$ is the stationary solution.
\end{definition}

Note that, if $r$ is the solution of a GF, the stationary solution is an equilibrium solution, $r_* = r_\infty$. Then the last term in \eqref{bregman0} vanishes since $u(r_\infty) = \chi$ constant, noting that $r$ has constant mass. However, in an AGF such as \eqref{pme_gf}, $r_*$ is not a minimizer of the entropy functional $E$ and therefore the last term in \eqref{bregman0} plays a key role, which is similar to the analysis of linear Fokker-Planck equations with non-constant drift (cf. \cite{arnold2008}). For a discussion of further properties of such relative entropies, also called Bregman distances in convex optimization we refer to \cite{Burger:2016hu}.

The time-dependent solution of the full GF \eqref{e:gf} is obtained numerically using the finite-volume scheme in space described in \cite{Carrillo:2015cf}. This scheme is second-order in space and preserves non-negativity and entropy dissipation. We use a Runge--Kutta scheme in time.  To obtain the time-dependent solution of the original equation \eqref{pme_d}, we consider it in its AGF form \eqref{pme_gf} and adapt the finite-volume scheme to include the higher-order term $f$.   

The equilibrium solution $r_\infty$ of the GF equation \eqref{e:gf} is easily obtained by solving $u(r_\infty) = \chi$ constant. For example, for the first entropy $E_1$ we have (see \eqref{ent_var1})
\begin{align} \label{equil_num}
u_1 (r_\infty) = \log r_\infty + \varepsilon_1 r_\infty  + \varepsilon_3 b = \chi, \qquad
\int_\Omega r_\infty\, \ud \bfx  = 1,
\end{align}
where $\chi$ is a constant to be determined by imposing the normalization constraint.  Equations \eqref{equil_num} are solved by discretizing in space using equally spaced points and solving the resulting system of equations by the Newton--Raphson's method. In the AGF case, the stationary solution $r_* (\bfx)$ of \eqref{pme_d} is obtained as the long-time limit solution, $r_* = \lim_{t \to \infty} r(t)$ by running the time-dependent simulation for large times until it has equilibrated. Note that \emph{a priori} we do not know if this limit exists, since \eqref{pme_d} is not a GF. However, we know that \eqref{pme_d} has a unique stationary solution $r_*$ and that this is close to $r_\infty$, specifically at a distance $O(\varepsilon^2)$. This is a consequence of a more general result of \cite{bruna2016cross}, where it was proven that for the corresponding cross-diffusion system (allowing the obstacles' density $b$ in \eqref{pme_d} to be dynamic), there exists a unique stationary solution $r_*$ with $\|r_*-r_\infty\|=O(\varepsilon^2)$.

The underlying microscopic model of \eqref{pme_d} is (see \cite{Bruna:2015eh} for details) 
\begin{equation} 
\label{ssde}
\ud {\bf X}_i(t) =  \sqrt{2} \, \ud{\bf W}_i(t),
\end{equation}
where $\bfX_i(t)$ is the position of the $i$th red particle at time $t$ ($ 1 \le i \le N_r$) and ${\bf W}_i(t)$ denotes a $d$-dimensional Brownian motion. The position $\bfX_i(t)$ is constrained by hard-core interactions with other red particles, $\| \bfX_i(t) - \bfX_j(t) \| \ge \epsilon_r$ for $j\ne i$, as well as with the blue obstacles, $\| \bfX_i(t) - {\bf O}_j\| \ge \epsilon_{rb}$ for $1\le j \le N_b$. Here ${\bf O}_j$ is the (fixed) position of the $j$th obstacle. In addition, the domain boundaries $\partial \Omega$ are hard-walls. Equation \eqref{ssde} is integrated using the Euler--Maruyama method and a constant time-step $\Delta t$, and simulated using the open-source C\texttt{++} library Aboria \cite{aboria,Robinson:2017vxa}. For all simulations we used a time-step of $\Delta t = (0.5 \epsilon_r)^2/2$, leading to an average diffusion step size of $0.5 \epsilon_r$, and $10^5$ realizations of \eqref{ssde} to compute the histograms of $r(\bfx,t)$. The stationary distribution of the particle system is obtained using the Metropolis--Hastings method (the single-particle local move variant, which means one particle is picked at random and a candidate new position is drawn from a normal distribution centered around  its current position). The variance of the move is adjusted so that the acceptance rate is of the order of 23\%  \cite{roberts1997weak}. For all Metropolis--Hastings simulations we used $2\times 10^4$ realizations of the porosity distribution and $10^5$ moves per realization. To speed-up convergence of the Markov chain, we initialized the red particles according to the stationary solution up to first order given in \eqref{rinf1}. 

We perform simulations for the two-dimensional case ($d=2$). For ease of comparison, we consider one-dimensional initial data and density of obstacles $b$ in $x$ such that $r(\bfx, t)$ is also one-dimensional in $x$. In particular, all PDE simulations are performed in the domain $[-0.5, 0.5]$, using 1000 spatial grid-points and a time-step of $10^{-6}$. However, the stochastic simulations are still performed in the full two-dimensional domain $\Omega = [-0.5, 0.5]^2$. 

\def \scc {0.6}
\def \scl {.8}
\begin{figure}
\unitlength=1cm
\begin{center}
\vspace{3mm}
\psfrag{x}[][][\scl]{$x$} \psfrag{t}[][][\scl]{$t$} \psfrag{r}[][][\scl]{$ r(x,t)$} \psfrag{E}[][][\scl]{$E$} \psfrag{RelE}[][][\scl]{$E^*$} \psfrag{t}[b][][\scl]{$t$}
\psfrag{a}[][][\scl]{(a)} \psfrag{b}[][][\scl]{(b)} \psfrag{c}[][][\scl]{\quad (c)} \psfrag{d}[][][\scl]{(d)}
\psfrag{data1}[][][\scl]{\ Sim.}
\psfrag{equil1}[][][\scl]{$r_{1,\infty}$}
\psfrag{equil2}[][][\scl]{$r_{2,\infty}$}
\psfrag{ss}[][][\scl]{\ \ $r_*$}

\psfrag{AGF}[][][\scl]{AGF}
\psfrag{GF1}[][][\scl]{GF1}
\psfrag{GF2}[][][\scl]{GF2}

\psfrag{Eagradf}[][][\scl]{$E$ AGF}
\psfrag{Egammagf}[][][\scl]{$E - \gamma$ AGF}
\psfrag{Egradf}[][][\scl]{$E$ GF1}
\psfrag{Esimul}[][][\scl]{$E$ Sim.}
\psfrag{Egamsim}[][][\scl]{$E-\gamma$ Sim.}
	\includegraphics[height = .4\textwidth]{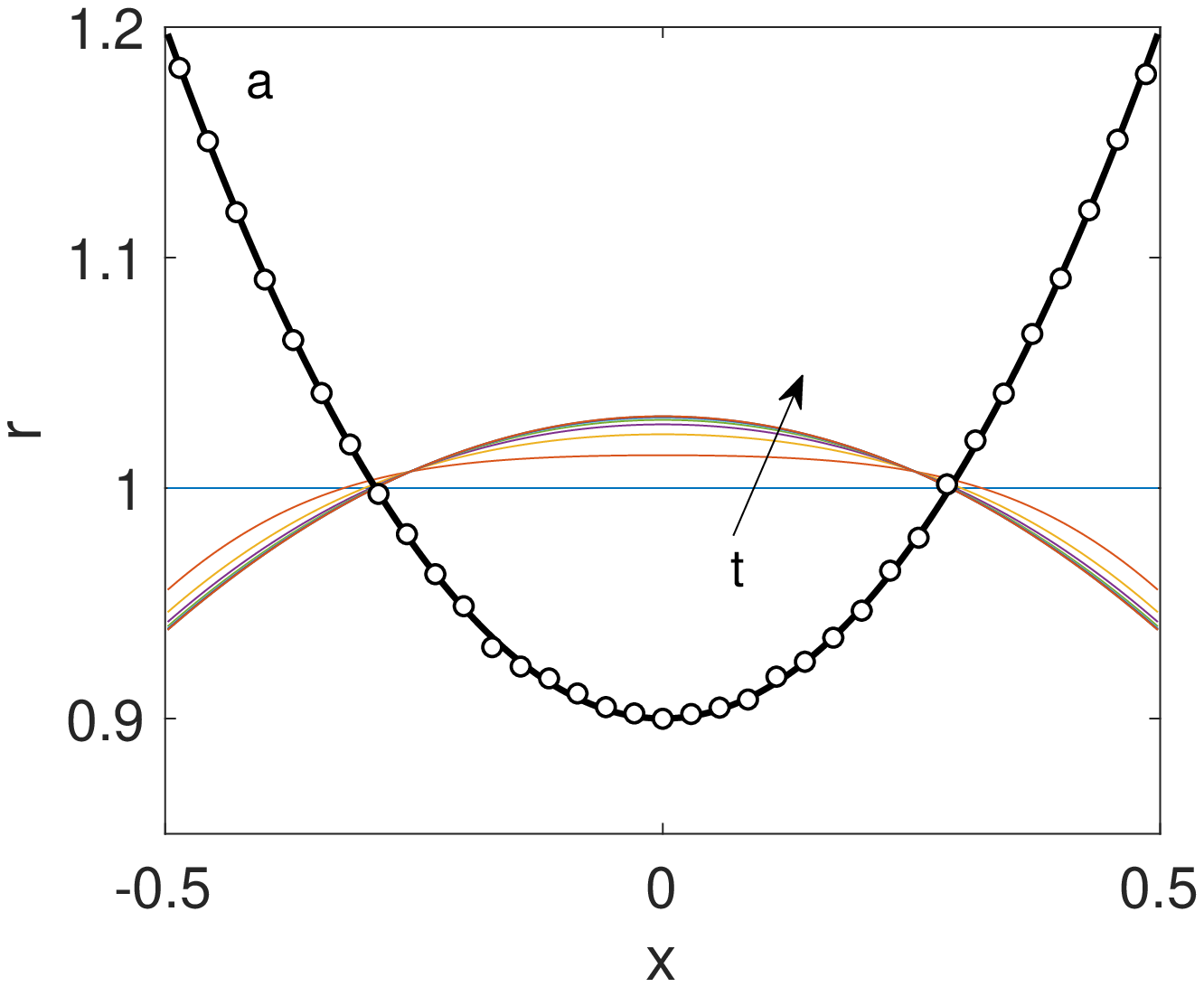} \hfill 
	\includegraphics[height = .4\textwidth]{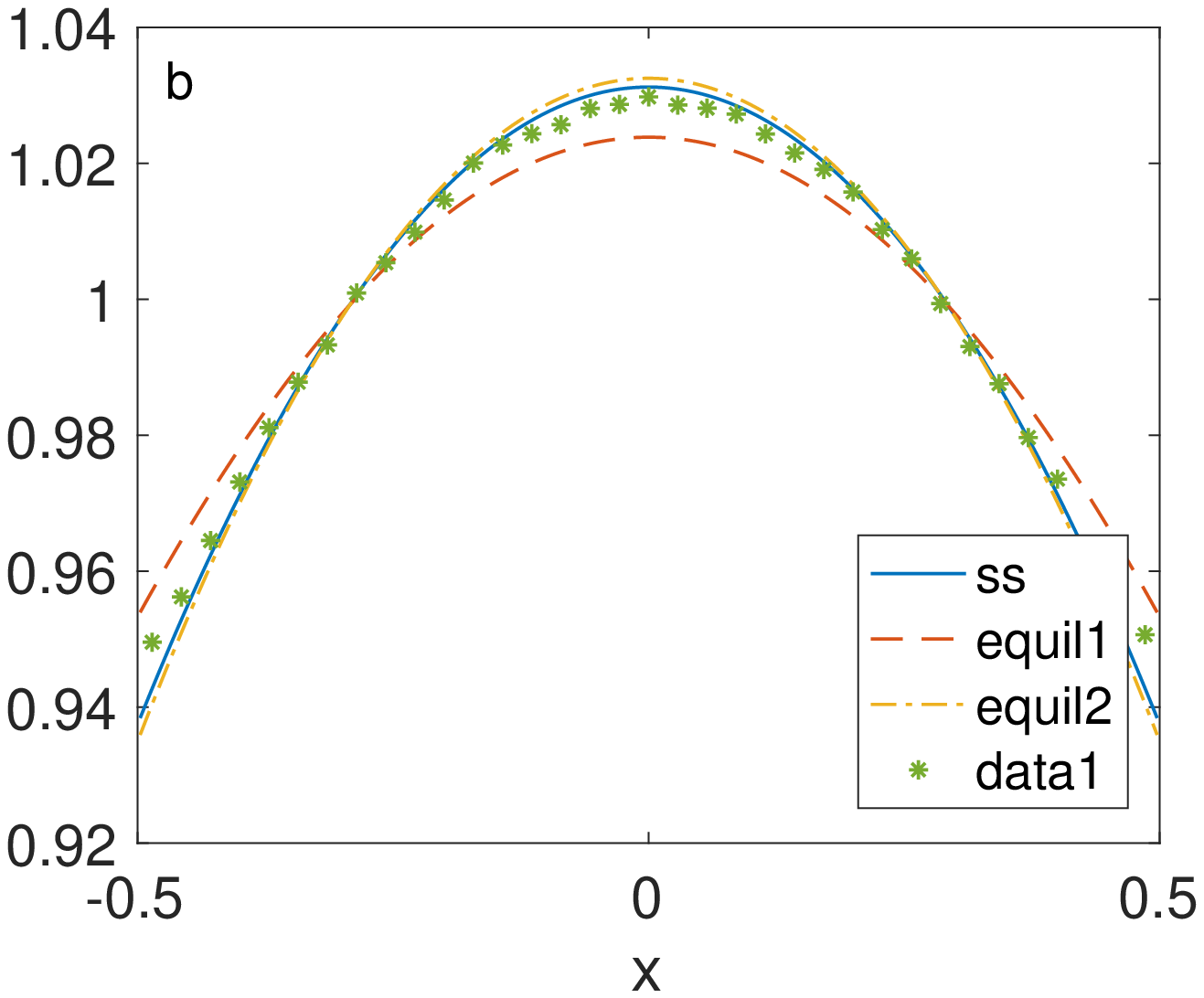}\\ \vspace{3mm}
	\includegraphics[height = .4\textwidth]{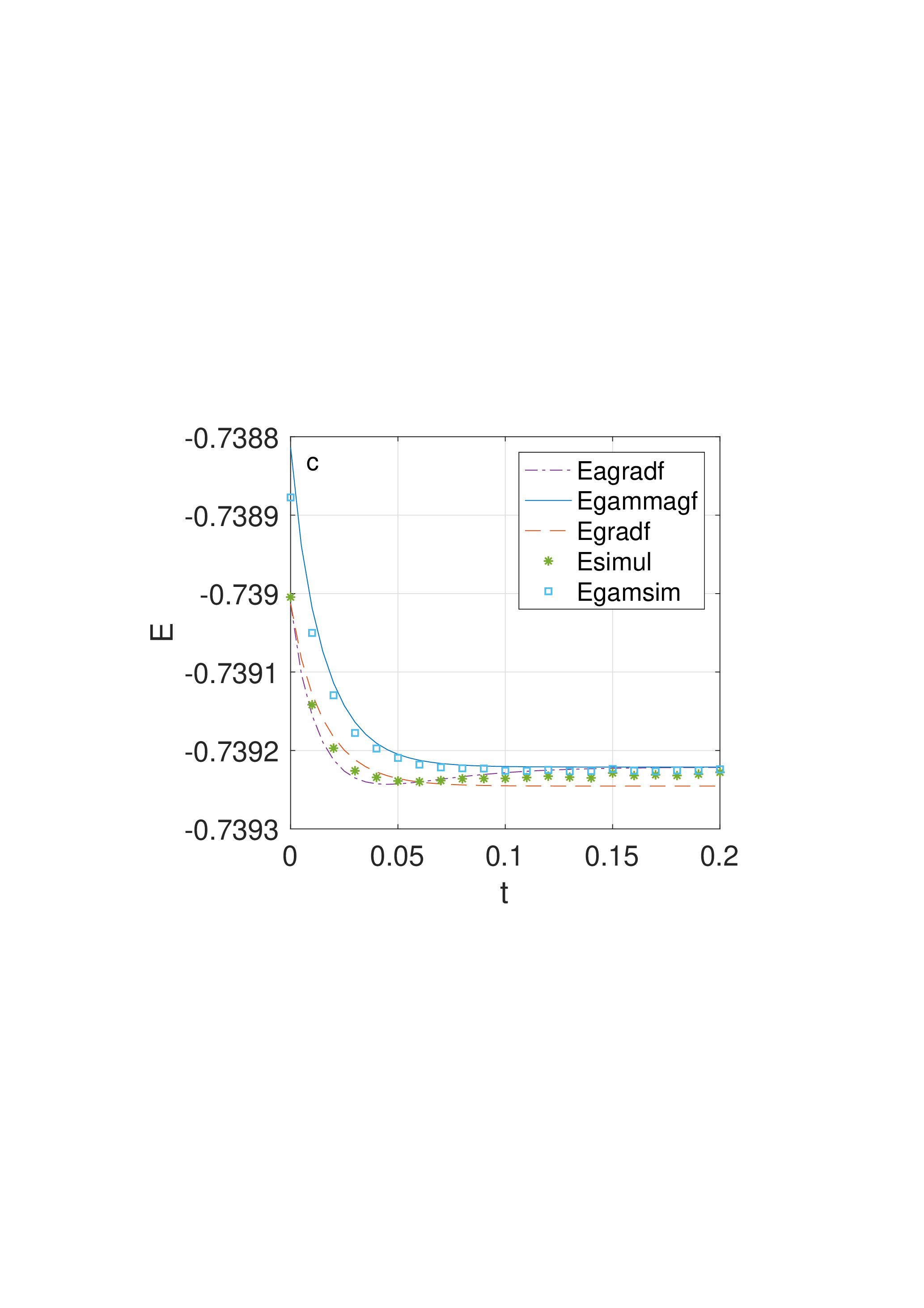} \hfill
	\includegraphics[height = .4\textwidth]{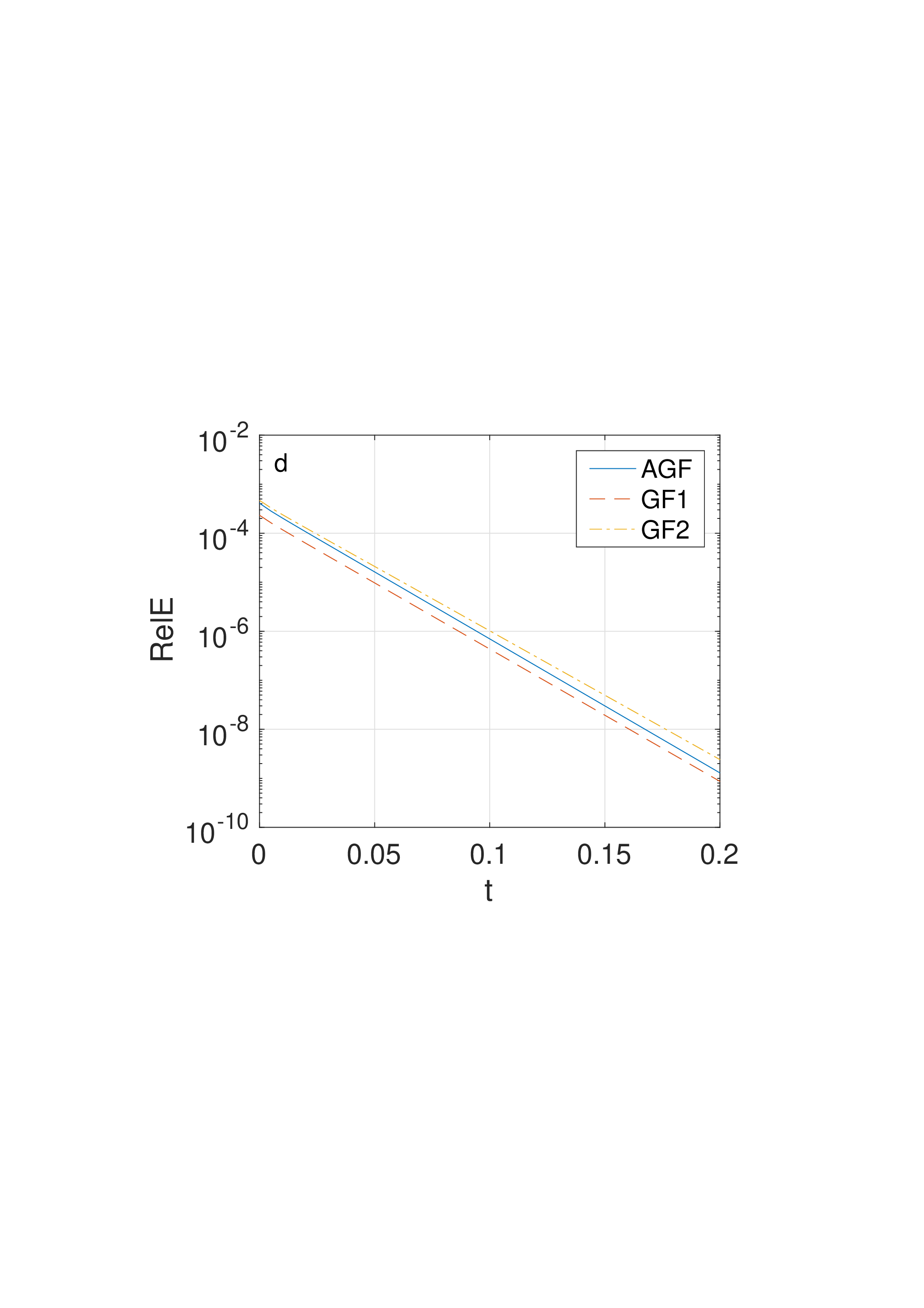} 
\caption{Example with a convex porosity distribution. (a) Time evolution of \eqref{pme_d} with a uniform initial density (times shown: $t = 0, 0.025, 0.05, \dots, 0.2$), and obstacles distribution $b(x) = 0.3(4x^2 + 3)$ (circles obtained from stochastic simulations). (b) Stationary solution $r_*$ of \eqref{pme_d}, equilibrium solution of \eqref{e:gf} with $E_1$ \eqref{entropy}, $r_{1,\infty}$ or $E_2$ \eqref{entropy2}, $r_{2,\infty}$, and stationary histogram from stochastic simulations of the particle system. (c) Evolution of the entropy $E_1$  and $E_{1} - \gamma_1$ using the solutions from the AGF, the GF, and stochastic simulations. (d) Evolution of the relative entropy $E^*$ given in \eqref{bregman0} in the AGF using $E_1$ and the GF using $E_1$ and $E_2$. Parameters used given in \eqref{parameters}.}
\label{fig:case1}
  \end{center}
\end{figure}

Figure \ref{fig:case1} illustrates the behavior of solutions in the case of a convex potential $b(x) = 0.3(4x^2 + 3)$, for the parameters
\begin{align} \label{parameters}
N_r = 100, ~N_b = 500, \epsilon_r = 0.01 \text{ and } \epsilon_b = 0.015.
\end{align}
With these parameters, the volume fraction occupied by particles is 0.1. In Figure \ref{fig:case1}(a)  we plot the time evolution of \eqref{pme_d} (colored thin lines) starting from the uniform distribution. The distribution of obstacles $b(x)$ is shown as a thick black line. The black circles correspond to the histogram of obstacles obtained from 20000 samples. In Figure \ref{fig:case1}(b) we plot the stationary solution $r_*$ of \eqref{pme_d} and the equilibrium solution $r_\infty$ of the associated GF \eqref{e:gf} with the entropies $E_1$  \eqref{entropy} and $E_2$  \eqref{entropy2}. We observe that the second entropy provides a closer approximation to the stationary solution of the original equation. Additionally, we plot the histogram of the stationary distribution of the microscopic system. We find that the original equation \eqref{pme_d} provides the best approximation to the stochastic simulations. 

Next we consider the evolution of the entropy. We consider the first entropy $E_1(t)$ along the solutions of \eqref{pme_d} and \eqref{e:gf}. The entropy decays monotonically along the solutions of the GF but it does not in the case of the AGF (see Figure \ref{fig:case1}(c)). Interestingly, the entropy $E_1$ computed from the histograms of the time-dependent stochastic simulations does not decay monotonically either. In the AGF case, this is to be expected since $r_*$ is not a minimizer of the entropy functional. Nevertheless, it turns out that the relative entropy $E^*$ also decays for the AGF, and to observe this in the entropy plot one needs to account for the integral term in \eqref{bregman0}. For convenience, we denote it by $\gamma = \int_\Omega u(r_*) (r(t)-r_*) \, \ud \bfx$. 
We plot $E_1 - \gamma_1$ in Figure \ref{fig:case1}(c) along the solutions of the AGF \eqref{pme_gf}  and the stochastic particle system. We now observe monotonic decay with the modified entropy functional. The relative entropy $E^*_1(t)$ along the solutions of the AGF \eqref{pme_gf} and the GF \eqref{e:gf}, and the relative entropy $E^*_2(t)$ along the GF are shown in a logarithmic plot in Figure \ref{fig:case1}(d). We observe the exponential decay expected for a GF with a similar rate in all cases (we show this in Subsection \ref{sec:exp_conv}). 

Figures \ref{fig:case1}(c-d) suggest that the AGF \eqref{pme_d} inherits properties typical of a GF, namely that the relative entropy functional $E^*$ is decaying along its solutions and that the convergence to its stationary solution $r_*$ is exponential in time. A rigorous analysis of this problem is a challenging question for future work. 
\def \scc {0.6}
\def \scl {.8}
\begin{figure}
\unitlength=1cm
\begin{center}
\vspace{3mm}
\psfrag{t}[][][\scl]{$t$}
\psfrag{E}[][][\scl]{$E$} 
\psfrag{Eagradf}[][][\scl]{$E$ AGF}
\psfrag{Egammagf}[][][\scl]{$E - \gamma$ AGF}
\psfrag{Egradf}[][][\scl]{$E$ GF2}
\psfrag{Esimul}[][][\scl]{$E$ Sim.}
\psfrag{Egamsim}[][][\scl]{$E-\gamma$ Sim.}
	\includegraphics[height = .4\textwidth]{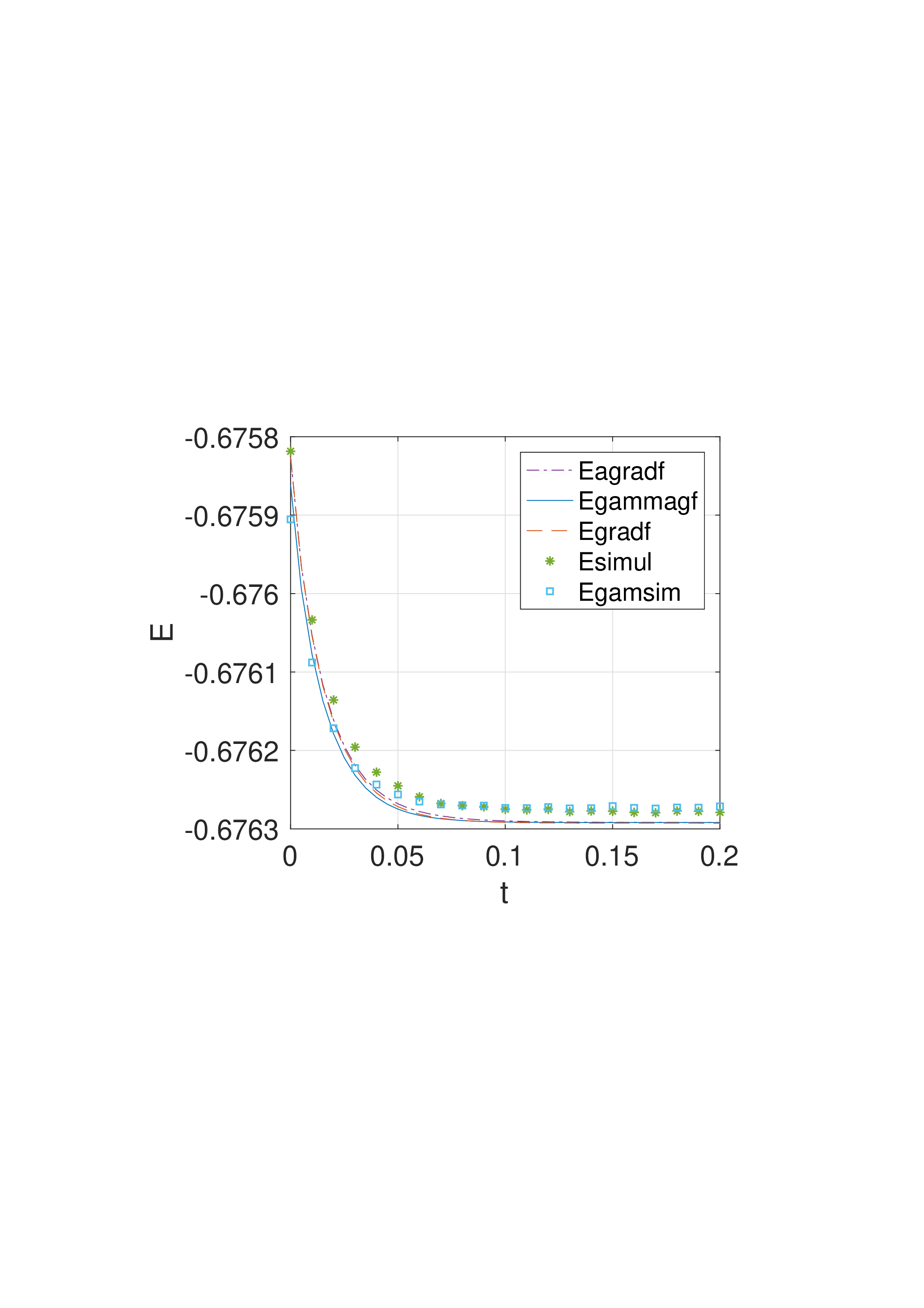}  
\caption{Evolution of the entropy $E_2(r(t))$ and $E_{2}(r(t)) - \gamma_2(r(t),r_*)$ using the solutions from the AGF, the GF, and stochastic simulations shown in Figure \ref{fig:case1}.}
\label{fig:case1_E2}
  \end{center}
\end{figure}
The fact that the first entropy $E_1$ does not decay monotonically for the stochastic simulations of the microscopic model suggests that $E_1$ is not a very good entropy for our model. The correct way to plot the entropy of the particle-based model would be to compute it directly from the simulations as a microscopic entropy, instead of using the histograms of $r(\bfx, t)$ and $b(\bfx)$ and the macroscopic entropy $E_1$. However, this is out of the scope of this paper since it would require performing a very high-dimensional density estimation to compute the entropic term of the $d N_r-$dimensional particle-based model. Instead, we examine whether the second entropy $E_2$ \eqref{entropy2}, which gave better results in Figure \ref{fig:case1}(b) for the stationary solution, is a better entropy for the microscopic system. To this end, Figure \ref{fig:case1_E2} reproduces Figure \ref{fig:case1}(c) but using $E_2$ instead of $E_1$. We see that, although there are slight differences between the entropy curves and those modified adding the extra term $\gamma_2$ (implying that their respective stationary solutions do not minimize exactly $E_2$), at least visually the second entropy decays monotonically along the solutions of our model \eqref{pme_d} and the particle-based model \eqref{ssde}. 

\def \scc {0.6}
\def \scl {.8}
\begin{figure}
\unitlength=1cm
\begin{center}
\vspace{3mm}
\psfrag{x}[][][\scl]{$x$} \psfrag{t}[][][\scl]{$t$} \psfrag{r}[][][\scl]{$ r(x,t)$} \psfrag{E}[][][\scl]{$E$} \psfrag{RelE}[][][\scl]{$E^*$} \psfrag{t}[b][][\scl]{$t$}
\psfrag{a}[][][\scl]{(a)} \psfrag{b}[][][\scl]{(b)} \psfrag{c}[][][\scl]{\quad (c)} \psfrag{d}[][][\scl]{(d)}
\psfrag{data1}[][][\scl]{Sim.}
\psfrag{equil1}[][][\scl]{$r_{1,\infty}$}
\psfrag{equil2}[][][\scl]{$r_{2,\infty}$}
\psfrag{ss}[][][\scl]{$r_*$}
\psfrag{AGF}[][][\scl]{AGF}
\psfrag{GF1}[][][\scl]{GF1}
\psfrag{GF2}[][][\scl]{GF2}
\psfrag{agf breg}[][][\scc]{AGF1-Breg.}
\psfrag{sim gf1}[][][\scc]{Sim. GF1}
\psfrag{sim breg1}[][][\scc]{Sim. GF1-Breg.}

\psfrag{Eagradf}[][][\scl]{$E$ AGF}
\psfrag{Egammagf}[][][\scl]{$E - \gamma$ AGF}
\psfrag{Egradf}[][][\scl]{$E$ GF1}
\psfrag{Esimul}[][][\scl]{$E$ Sim.}
\psfrag{Egamsim}[][][\scl]{$E-\gamma$ Sim.}
	\includegraphics[height = .4\textwidth]{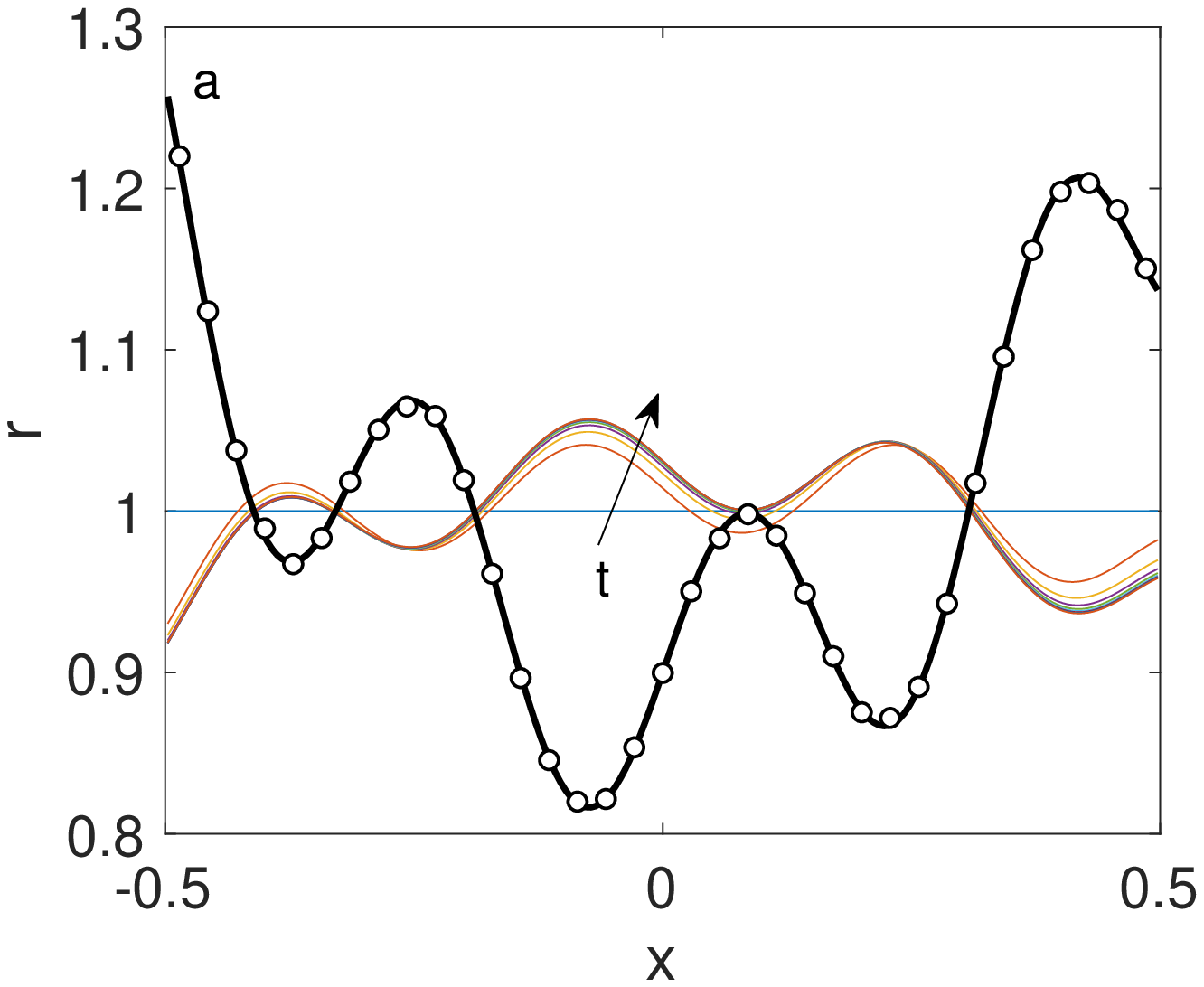} \hfill 
	\includegraphics[height = .4\textwidth]{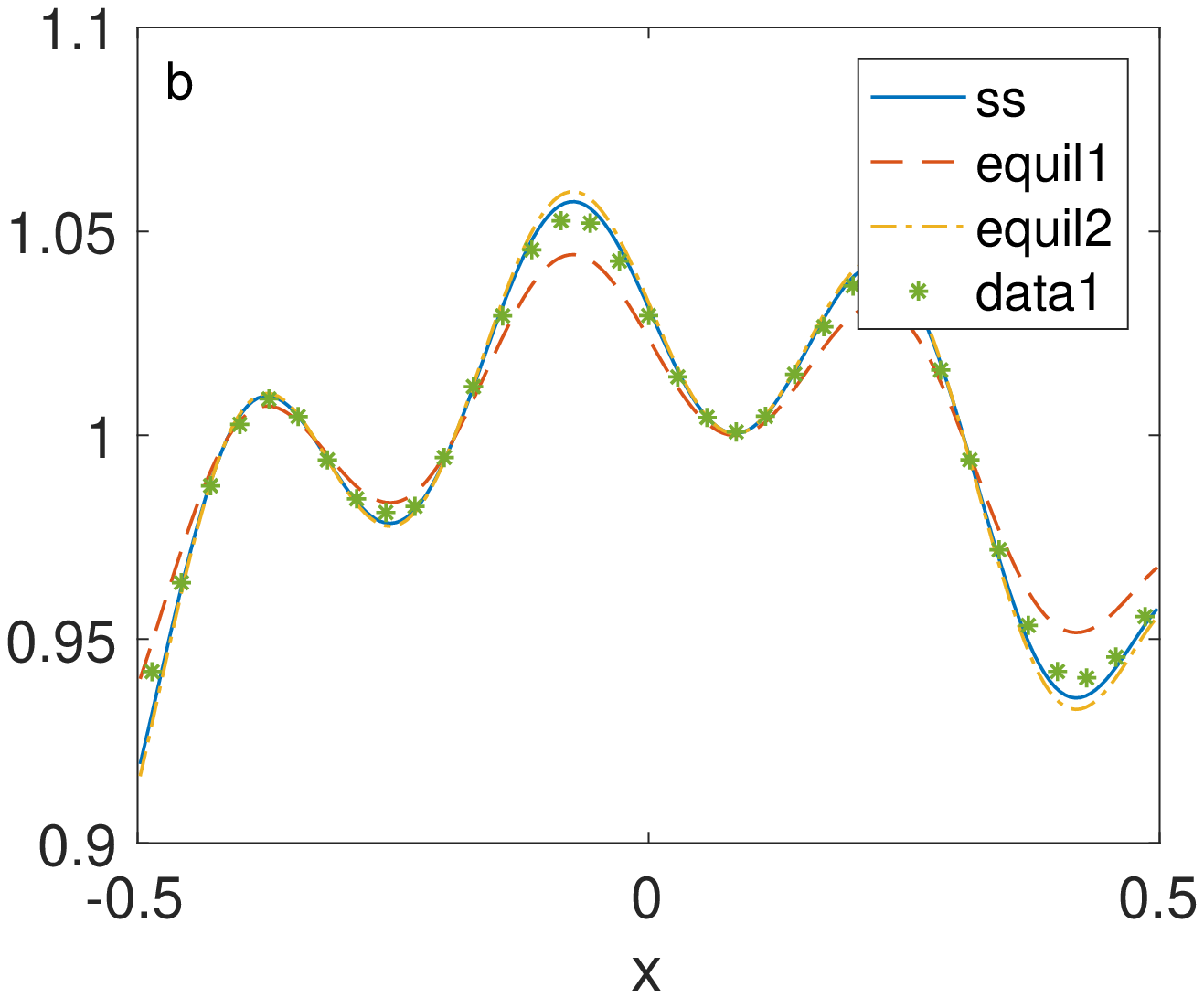}\\ \vspace{3mm}
	\includegraphics[height = .4\textwidth]{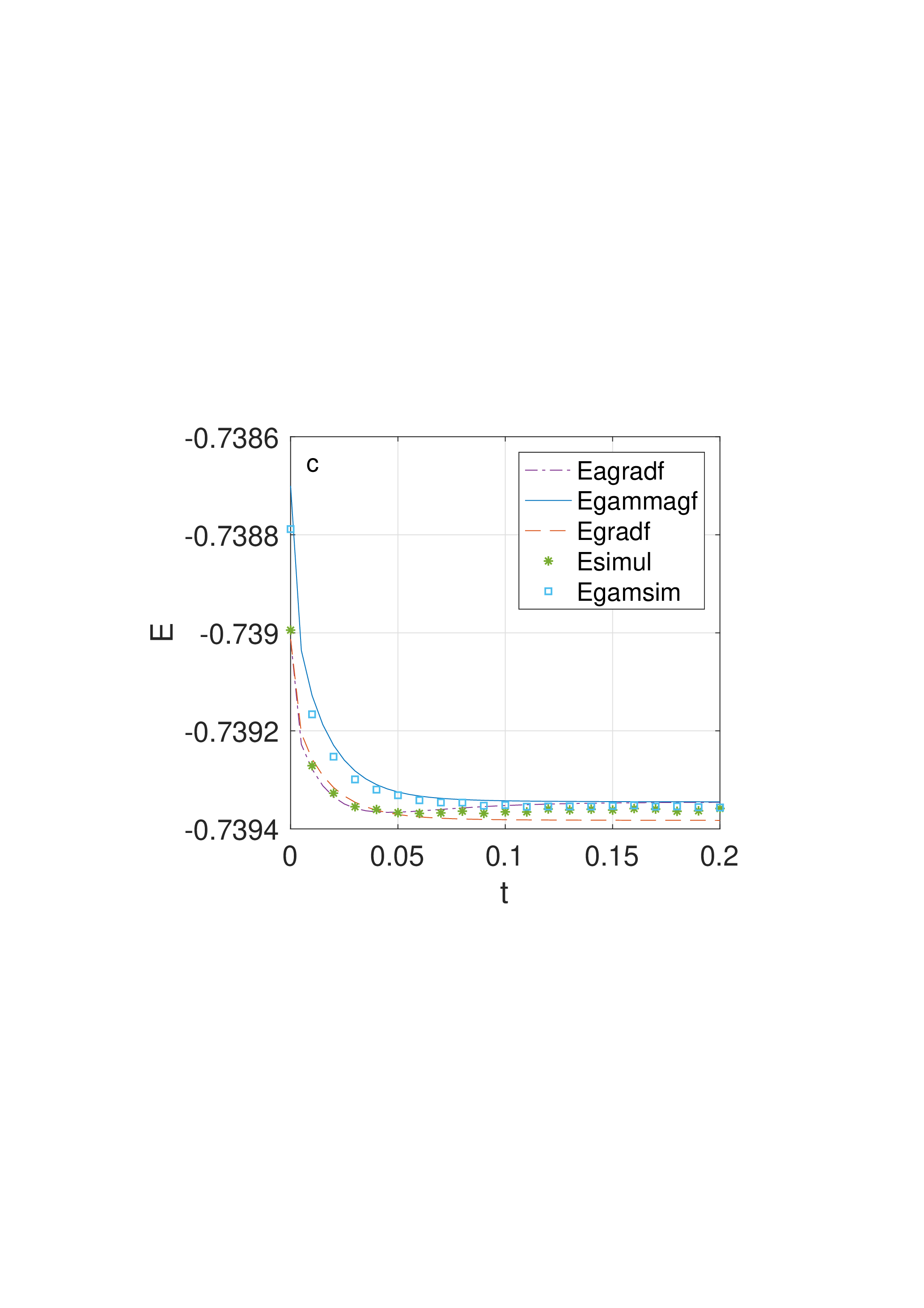} \hfill
	\includegraphics[height = .4\textwidth]{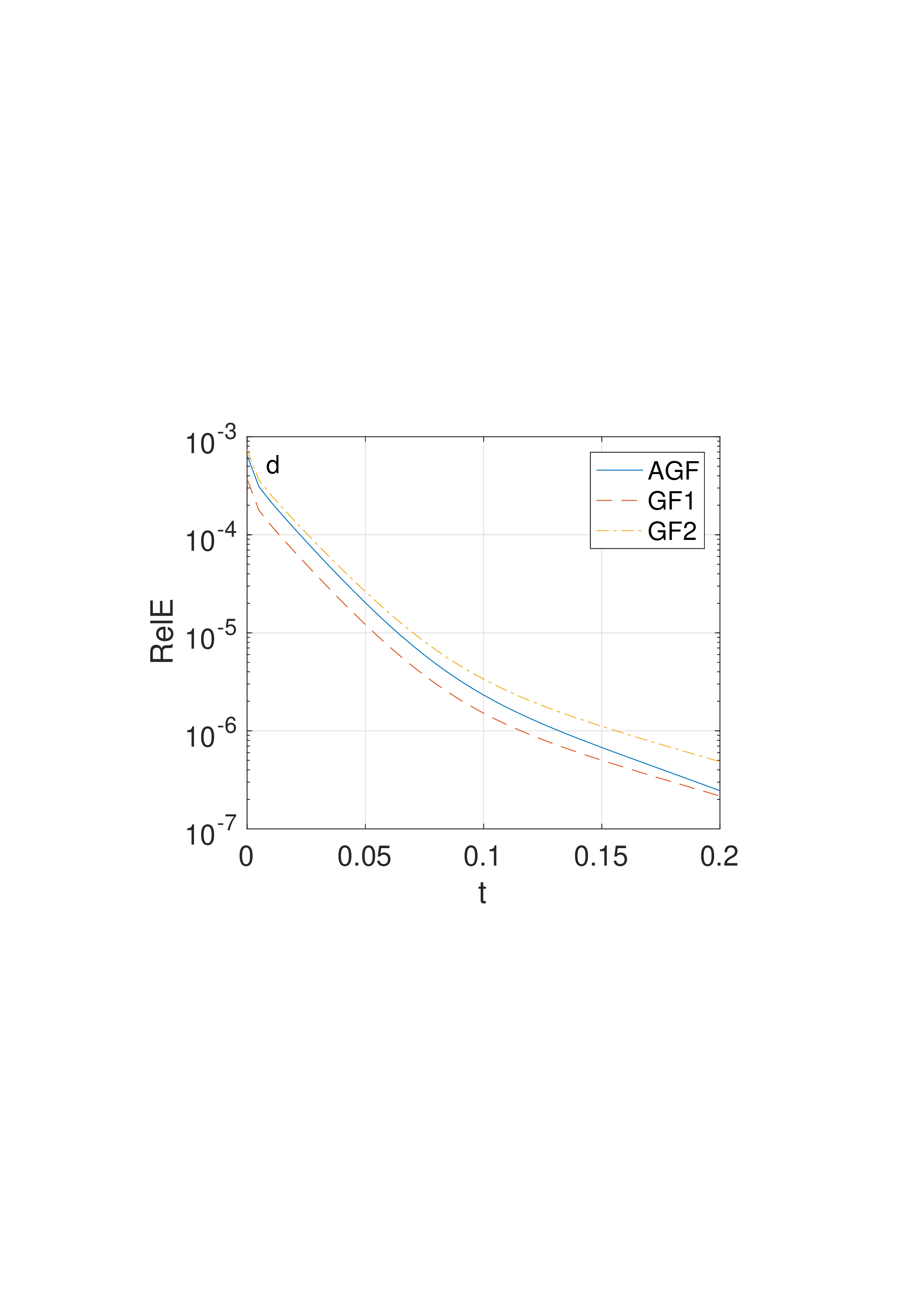} 
\caption{Example with a nonconvex porosity distribution.
Example with a convex porosity distribution. (a) Time evolution of \eqref{pme_d} with a uniform initial density (times shown: $t = 0, 0.025, 0.05, \dots, 0.2$), and obstacles distribution $b(x) = 1.2(1+0.1 \sin(20x))(x^2 + 0.75)$ (circles obtained from stochastic simulations). (b) Stationary solution $r_*$ of \eqref{pme_d}, equilibrium solution of \eqref{e:gf} with $E_1$ \eqref{entropy} or $E_2$ \eqref{entropy2}, and stationary histogram from stochastic simulations of the particle system. (c) Evolution of the entropy $E_1(t)$ using the solutions from the AGF, the GF, and stochastic simulations, as well as the Bregman-modified entropy. (d) Evolution of the relative entropy $E^*(t)$ in the AGF using $E_1$ and the GF using $E_1$ and $E_2$. Parameters used given in \eqref{parameters}.}
\label{fig:case2}
  \end{center}
\end{figure}

Next we consider a perturbation of a convex potential for $b$ as depicted in Figure \ref{fig:case2}(a). We again plot the stationary solutions of the original equation and its AGFs, and the evolution of the entropy and relative entropy. The numerical experiments indicate exponential convergence of the relative entropy functional (Figure \ref{fig:case2}(d)). In particular, the original AGF equation \eqref{pme_d} captures best the behavior of the stochastic particle system, with the GF with second entropy $E_2$ not being far off (see Figure \ref{fig:case2}(b)). We recall 
that we impose no-flux boundary conditions on a bounded domain, hence it is not surprising to still observe exponential convergence 
(we discuss this further in Subsection \ref{sec:exp_conv}).

In the examples above we have seen that the second entropy-mobility pair \eqref{pair2} provided the best approximation to the solutions of \eqref{pme_d} and the stochastic simulations of \eqref{ssde}. However, we used fixed parameters \eqref{parameters} so it is not clear if this  generalizes to other parameter values. To this end, we now examine the behavior of the error between the stationary solution $r_*$ of \eqref{pme_d} and the equilibrium solutions $r_{i, \infty}$ of the GF using each of the three entropies $E_1$ \eqref{entropy}, $E_2$ \eqref{entropy2} and $E_3$ \eqref{entropy_gen} (with $\beta =0$) while varying the  parameter $\varepsilon$ and the relative importance of self- to cross-interactions $\varepsilon_1/\varepsilon_2$. Since the three GFs are AGFs of order $\varepsilon$ to \eqref{pme_d}, we expect this error to be $O(\varepsilon^2)$. 

\def \scc {0.6}
\def \scl {.8}
\begin{figure}
\unitlength=1cm
\begin{center}
\vspace{3mm}
\psfrag{x}[][][\scl]{$x$} \psfrag{t}[][][\scl]{$t$} \psfrag{r}[][][\scl]{$ r(x,t)$} \psfrag{E}[][][\scl]{$E$} \psfrag{RelE}[][][\scl]{$DE$} \psfrag{t}[b][][\scl]{$t$}
\psfrag{a}[][][\scl]{(a)} \psfrag{b}[][][\scl]{(b)} \psfrag{c}[][][\scl]{(c)}
\psfrag{a2}[][][\scl]{(d)} \psfrag{b2}[][][\scl]{(e)} \psfrag{c2}[][][\scl]{(f)}
\psfrag{data0}[][][\scl]{$r_*$}
\psfrag{data1}[][][\scl]{GF1}
\psfrag{data2}[][][\scl]{GF2}
\psfrag{data3}[][][\scl]{GF3}
\psfrag{data4}[][][\scl]{$O(\varepsilon^2)$}
\psfrag{eps}[][][\scl]{$\varepsilon$}
\psfrag{norm}[][][\scl]{error}
	\includegraphics[height = .38\textwidth]{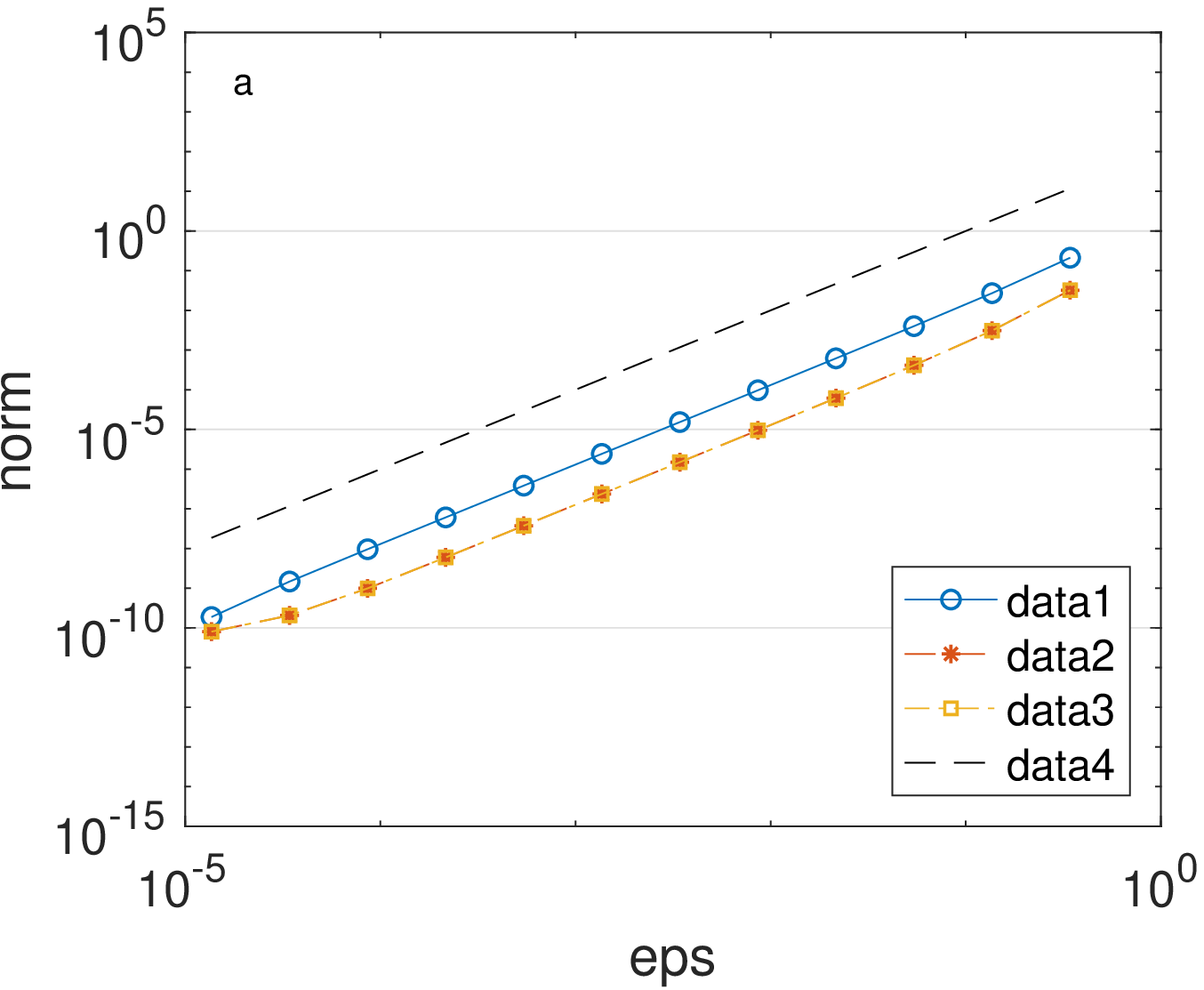} \includegraphics[height = .38\textwidth]{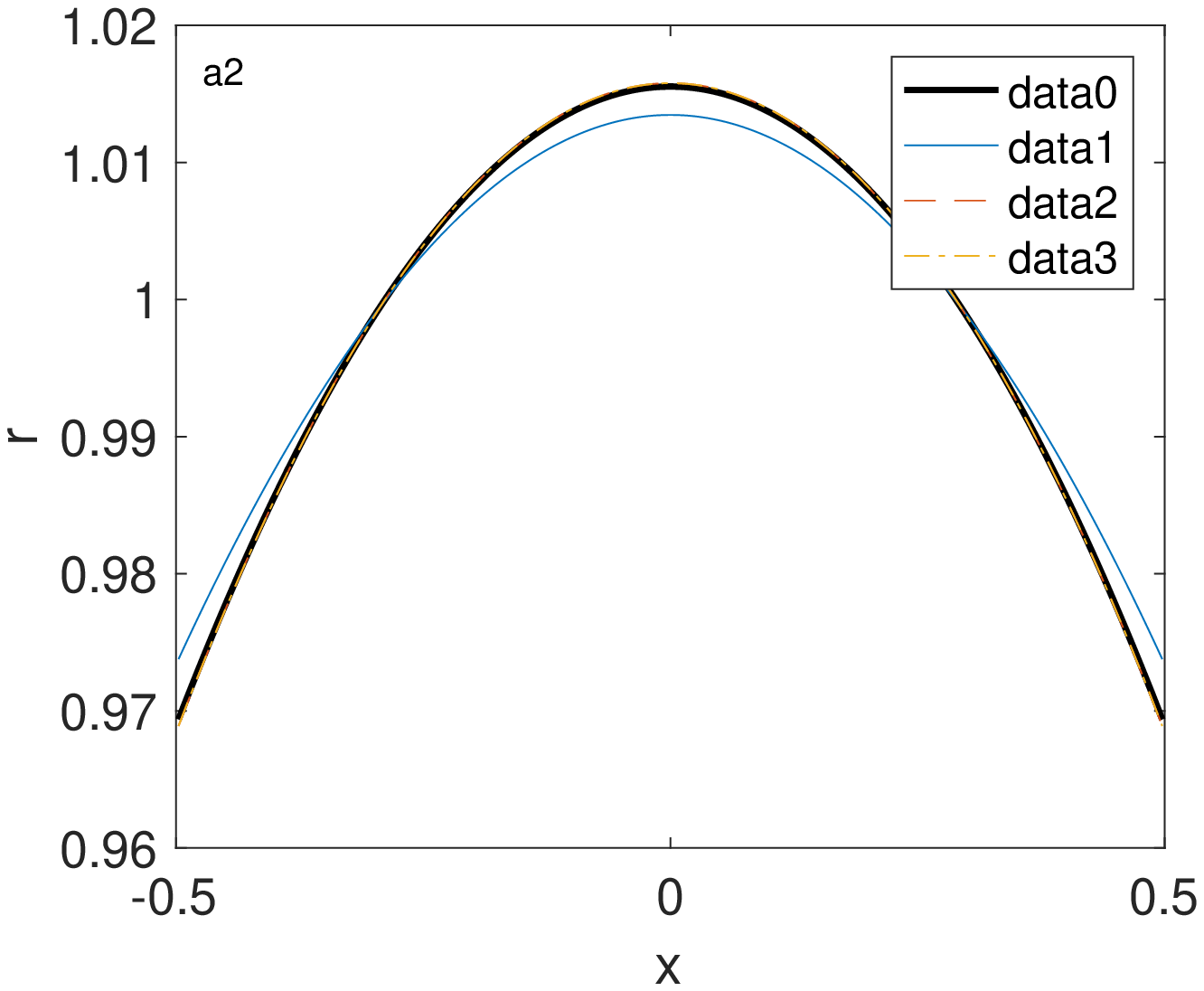} \\ 
	\includegraphics[height = .38\textwidth]{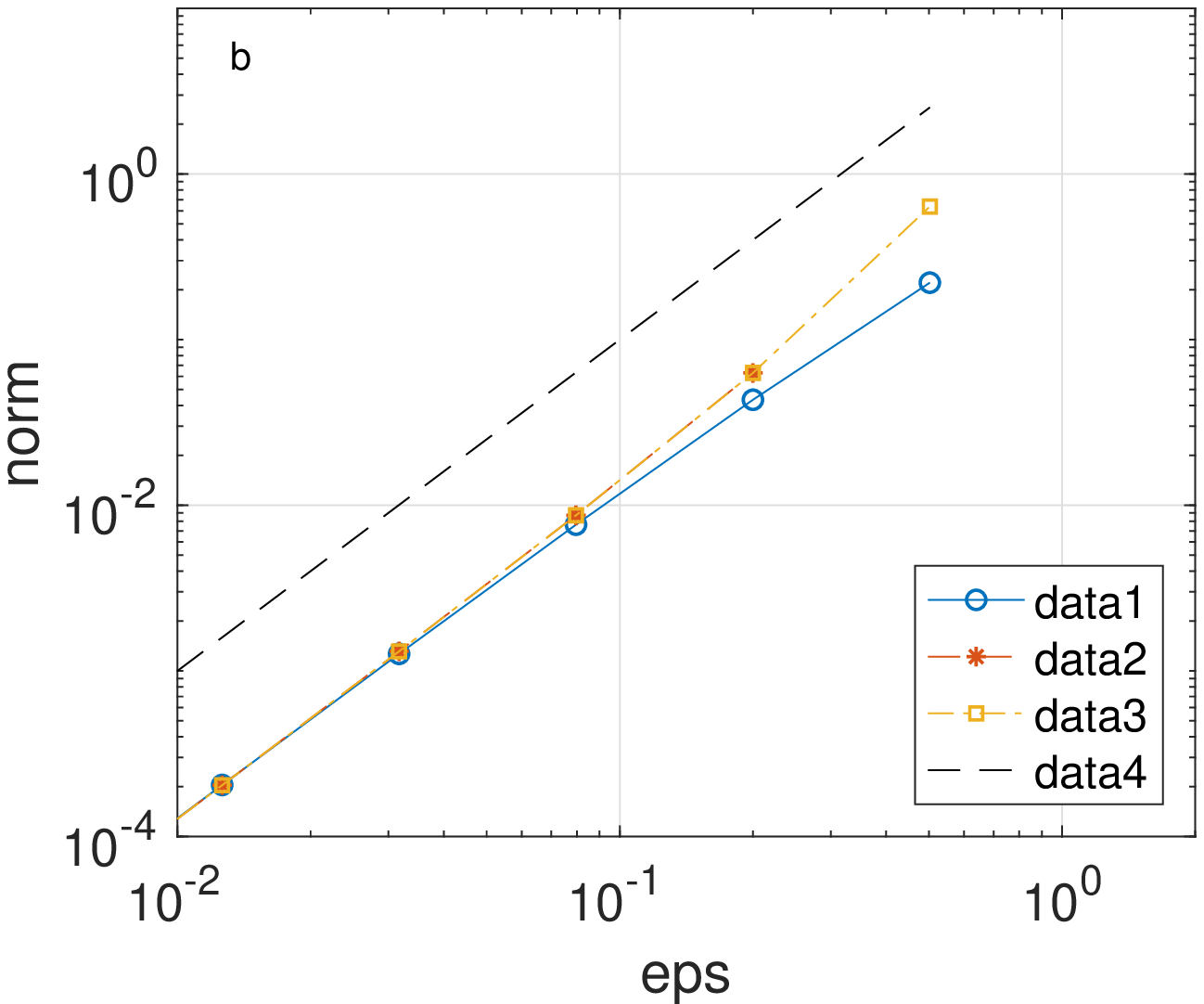} \includegraphics[height = .38\textwidth]{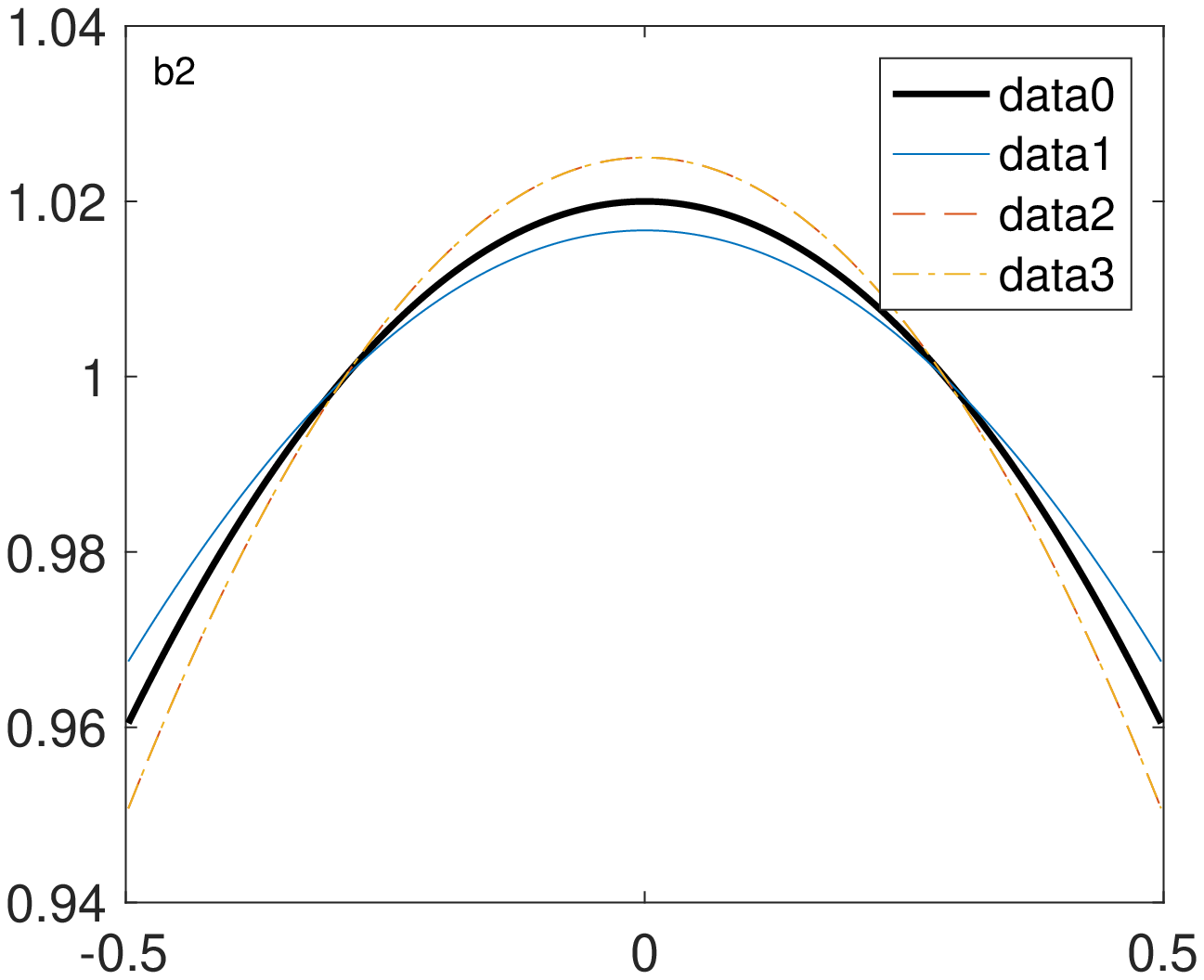} \\
	\includegraphics[height = .38\textwidth]{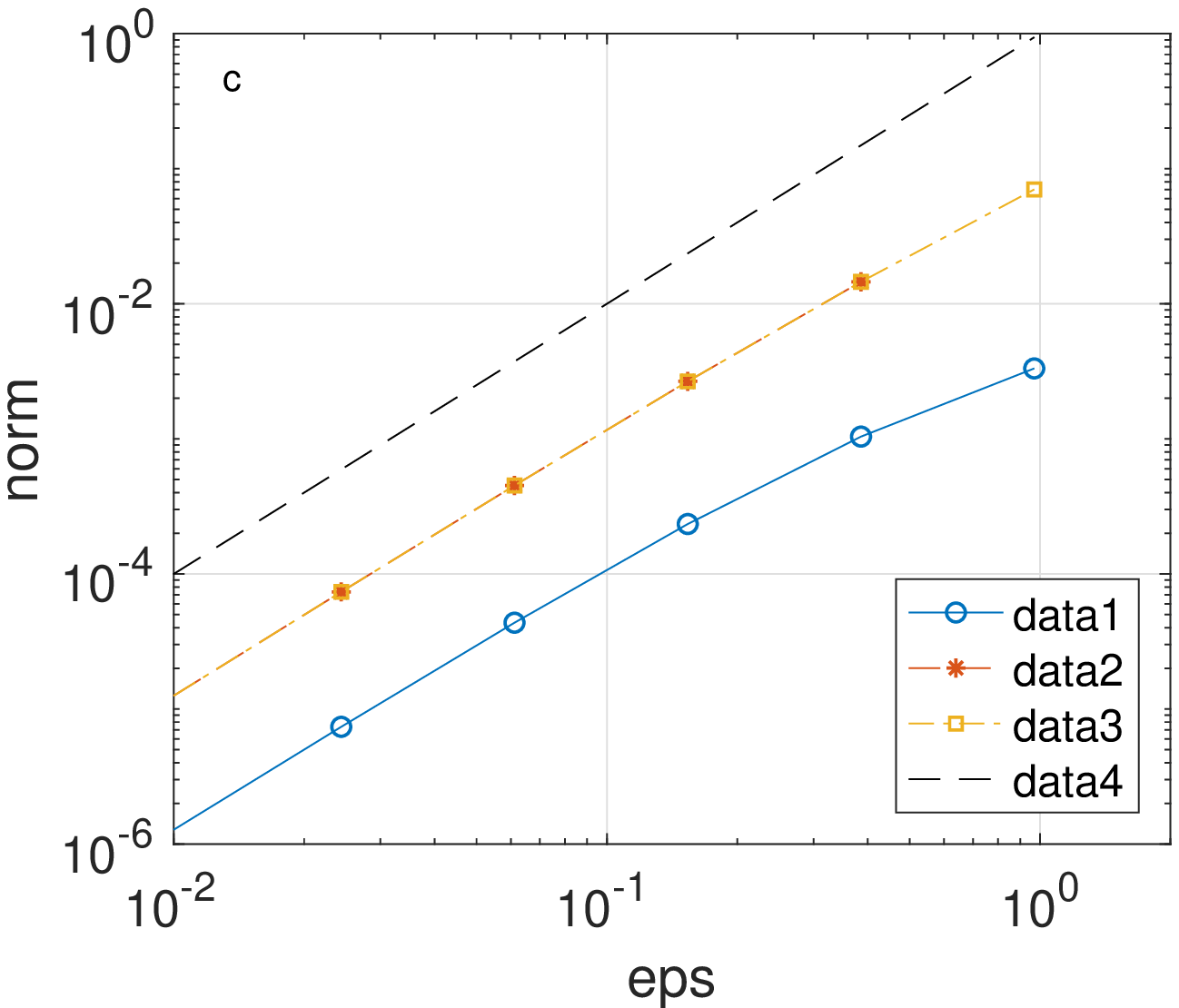} \includegraphics[height = .38\textwidth]{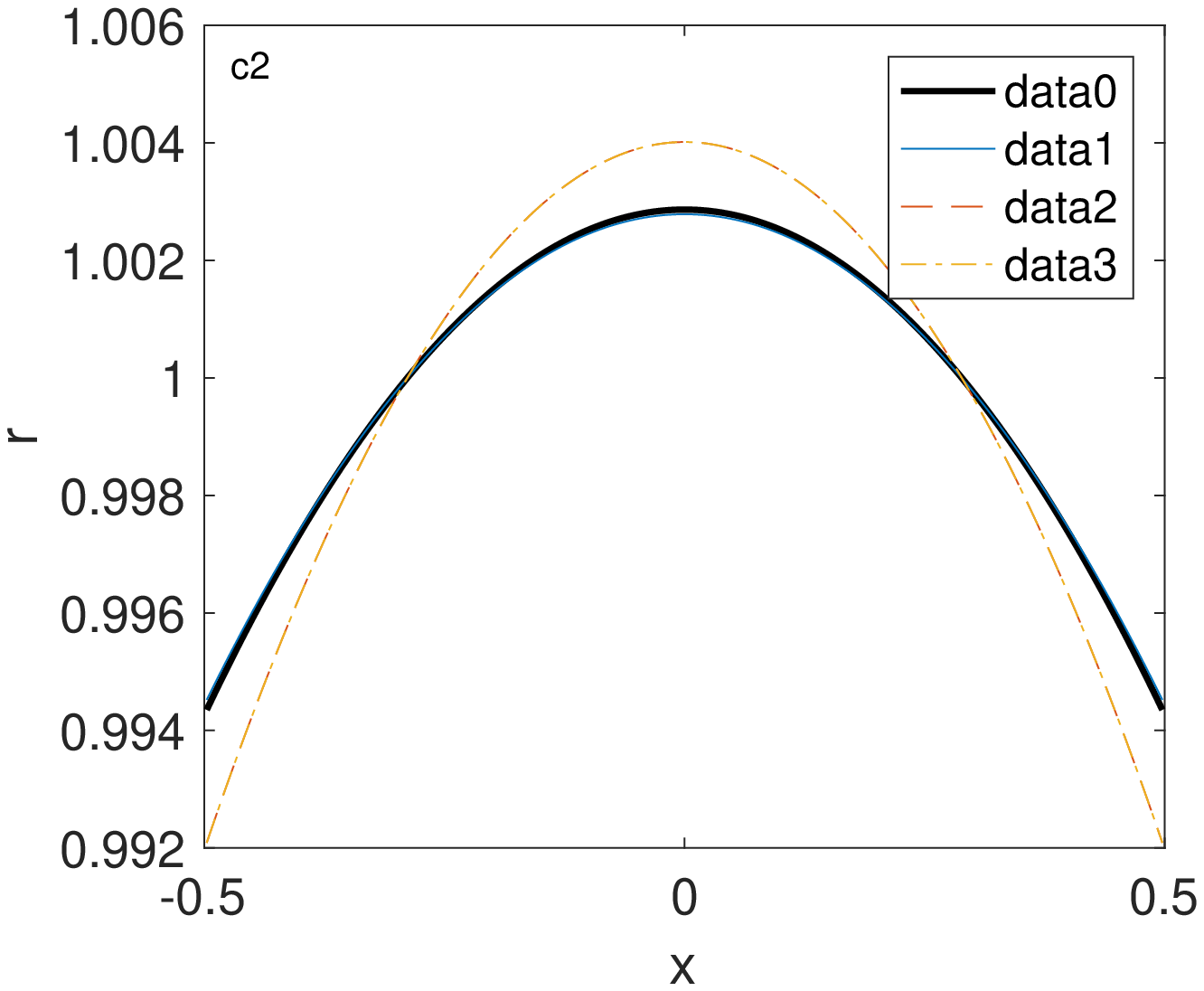}
\caption{Asymptotic behavior of the error $\| r_* - r_{i,\infty}\|$ between the stationary solution $r_*$ of the AGF and the equilibrium solution $r_{i,\infty}$ corresponding to GF i,  for $i = 1, 2, 3$ as a function of $\varepsilon$. (a) $\varepsilon_1/\varepsilon_2 = 0.1$ and $\varepsilon_2 = \varepsilon$, (b) $\varepsilon_1/\varepsilon_2 = 1$, $\varepsilon_2 = \varepsilon$, and (c) $\varepsilon_1/\varepsilon_2 = 10$ and $\varepsilon_1 = \varepsilon$. (d) Solutions corresponding to (a) for $\varepsilon=1$. (e) Solutions corresponding to (b) for $\varepsilon=1$. (f) Solutions corresponding to (c) for $\varepsilon=1$. }
\label{fig:comparison}
  \end{center}
\end{figure}

In Figure \ref{fig:comparison} we plot the $L_2$-norm of the difference between the stationary solution $r_*$ of \eqref{pme_d} and the three equilibrium solutions $r_{i, \infty}$. The three rows correspond to different ratios of the self-interaction $\varepsilon_1$ to obstacles-interaction $\varepsilon_2$ (we recall that for $d=2$,  $\varepsilon_3 = \varepsilon_2$). In all cases we observe the expected second order error in $\varepsilon$. In Figure \ref{fig:comparison}(a), $\varepsilon_1 < \varepsilon_2$ so that self-interactions are weaker than interactions with the obstacles, and we find that the second and third entropies are closest to the AGF \eqref{pme}. In Figure \ref{fig:comparison}(b) we choose $\varepsilon_1  =  \varepsilon_2$, for which the error for all three entropies is almost the same. Finally, in Figure \ref{fig:comparison}(c) $\varepsilon_1 > \varepsilon_2$ so that self-interactions are the dominant ones, and we find that the first entropy has a smallest error. The plots in the right column of Figure \ref{fig:comparison} show the solutions used to compute the point $\varepsilon=1$ in the left column. It appears as if the stationary solution of \eqref{pme_d} is bounded between the equilibrium solutions given by the entropies $E_1$ and $E_2$ for all values of $\varepsilon$. 

We have seen that if self-interactions are small compared to interactions with the obstacles ($\varepsilon_1 \ll \varepsilon_2$), GF2 approaches the AGF. This can be explained by noting that, in the limit $\varepsilon_1 \to 0$ (and $d=2$ so that $\varepsilon_2 = \varepsilon_3$), the higher-order term in the second AGF $f_2$ \eqref{f_GF2} becomes identically zero at all orders. This implies that GF2 becomes an \emph{full} GF for our model. In contrast, when self-interactions are the dominant ones ($\varepsilon_2 \ll \varepsilon_1$), the error in GF2 increases but GF1 is very close to the AGF. This again can be explained by looking at the higher-order term of the first AGF, $f_1$ in \eqref{hot_f}. When $\varepsilon_2 = \varepsilon_3 \to 0$, $f_1$ vanishes and our model becomes a full GF with the first entropy-mobility pair. 

\section{Analysis of the full gradient flow (GF) structure.} \label{sec:GF_analysis}

Before addressing the analysis of the asymptotic gradient flows in the next section, we present several analytic results for the respective full gradient flow structures. We start by reviewing the analysis for the first entropy-mobility pair \eqref{pair1}, which follows known results from the literature. In the case of the second entropy mobility pair \eqref{pair2} no such results are available. Hence we state the full proofs.  \\
In what follows we focus on the two-dimensional case $d=2$, for which the AGF \eqref{pme_d} becomes
\begin{align}\label{pme}
\partial_t r &= \nabla  \cdot \left[   ( 1 +
 \varepsilon_1   r -\varepsilon_2 b) \nabla {  r} + \varepsilon_2 r \nabla { b}  \right],   
\end{align}
with $\varepsilon_1 = (N_r-1) \pi \epsilon_r^2$ and $\varepsilon_2 = N_b \pi \epsilon_{rb}^2$. All analytic results presented in this and the next section extend in a straightforward manner to the three-dimensional problem. The AGF problem \eqref{pme}, as well as the associated GF problems, are complemented with no-flux boundary conditions on $\partial \Omega$ and initial data $r(\bfx,0)=r_0(\bfx)$. Since \eqref{pme} is valid in the small-volume fraction limit, we require its solutions to stay in the set
\begin{equation}\label{equ:set}
\mathcal{S}=\left\{r\in \mathbb{R}:r\geq 0,\varepsilon_1 r+\varepsilon_2 b \leq 1\right\}.
\end{equation}
Furthermore, we assume that $b:\overline{\Omega}\to [0,1/\varepsilon_2)$ is fixed such that
\begin{equation}
	\label{bound_b}
	0<c_1\leq 1-\varepsilon_2 b\leq 1.
\end{equation}
We note that these bounds correspond (for $d=2$ and $N_r$ large) to the following bounds on the volume concentrations, $4(\phi_r + \phi_b) \le 1$ and $4\phi_b \le 1$ (see Section \ref{sec:model}). This is then consistent with the small-volume fraction assumption to derive \eqref{pme}.

\subsection{Global in time existence and long time behavior for $(m_1,E_1)$}

In the first part of this section we discuss the analysis of the full GF structure for the first entropy mobility pair \eqref{pair1}.
This pair accounts for pairwise interactions via a quadratic term in the entropy, but does not contain any information on the correct physical bounds on the density.
However, the advantage of the first pair is that it allows us to 
prove global in time existence of the AGF \eqref{pme} (see Section \ref{sec:AGF_analysis}),
whereas for the second pair it is not clear how to control the structure of the higher-order term $f_2$ in \eqref{f_GF2}. 
In addition, we can prove exponential convergence to equilibrium using a logarithmic Sobolev inequality for $E_1$.

\subsubsection{Well-posedness and global in time existence.}
The well-posedness of the related two species model (we recall that we consider the special case of one immobile obstacle species) was 
presented in \cite{bruna2016cross}. These results can be adapted in a straight forward way to our problem 
and will not be detailed in the following. The existence and uniqueness proofs of solutions to the GF and the AGF
follow the same arguments. Hence we omit the existence proof for the GF structure and present the detailed proof for the AGF in Section \ref{sec:AGF_analysis} only.  

\subsubsection{Exponential convergence for convex potentials b.} \label{sec:exp_conv}
In this subsection, we prove exponential convergence towards equilibrium under some assumptions on the potential $b$ for the full GF 
\begin{align}\label{pme_gf_mod}
\partial_t r &= \nabla \cdot \left[m_1(r)\nabla\frac{\delta E_1}{\delta r}\right],
\end{align}
using the first entropy-mobility pair \eqref{pair1}, which we rewrite here for ease of reference:
\begin{equation*}
m_1(r)=r(1-\varepsilon_2 b) \qquad \text{and}  \qquad E_1(r)=\int_{\Omega} \left[ r(\log r-1)+\frac{1}{2}\varepsilon_1 r^2+\varepsilon_2 br\right] \ud \bfx.
\end{equation*}

Different approaches to study the long time behavior of GFs can be found in the literature, for example using the HWI method \cite{OV2000}  or the Bakry--\'Emery strategy \cite{bakry1985diffusions}.  Lisini \cite{L2009} generalized the former approach to study the equilibration behavior of scalar equations with a GF structure and nonlinear mobility. These equations can be interpreted as GFs with respect to the 2-Wasserstein distance and a Riemann distance induced by the nonlinear uniformly elliptic mobility. In this subsection we follow the Bakry--\'Emery strategy \cite{bakry1985diffusions}. This approach has been used successfully to study the equilibration behavior of various scalar PDEs, see for example \cite{carrillo2001entropy,carrillo2003kinetic, MV99}.  A key ingredient for proving exponential convergence to stationary solutions is the relative entropy functional \eqref{bregman0}, also known as the Bregman distance between the evolution and the stationary solution \cite{Burger:2016hu}. As already discussed in Section \ref{sec:model}, equation \eqref{pme_gf_mod} agrees with \eqref{pme_gf} up to order $\varepsilon$. 

We first show exponential convergence towards equilibrium for equation \eqref{pme_gf_mod} in case of a given uniformly convex potential $b \in W^{2,\infty}(\Omega)$, that is, $\text{Hess}(b) \geq \tilde{\lambda} I$ for some $\tilde{\lambda} >0$.  Let $r_{\infty}$ denote the equilibrium solution of \eqref{pme_gf_mod}. Due to the strict convexity of the entropy functional $E_1(r)$, there exists a unique and bounded minimizer $r_\infty$, that is,  $r_\infty \in L^\infty(\Omega)$.
To show exponential convergence towards $r_\infty$ we  derive a so-called logarithmic Sobolev inequality for the corresponding relative entropy functional \cite{MV99}
\begin{align}\label{logSobolev2}
E_1^*(r)= E_1(r)-E_1(r_\infty) \leq  \frac{1}{\lambda} I(r),
\end{align} 
where $\lambda >0$ and $I(r)$ denotes the dissipation of the entropy functional. Note that the last term of the relative entropy in \eqref{bregman0} drops out in the case of a GF since $u_1(r_\infty)$ is constant.  Due to the full GF structure, the entropy functional $E_1(r)$ satisfies:
\begin{align*}
\frac{\mathrm{d} E_1(r)}{\mathrm{d}t}&= -\int_\Omega m_1(r) |\nabla u_1|^2\, \ud \bfx=:-I(r),
\end{align*} 
where $I(r)$ denotes the so called dissipation term. Furthermore, we also have
\begin{align}\label{rel_ent_dissi}
\frac{\mathrm{d} E_1^*(r)}{\mathrm{d}t}&=-I(r),
\end{align}
using $u_{1,\infty} =\chi$ constant, as well as the mass conservation property. Thanks to the bounds on $b$ \eqref{bound_b}, we have 
\begin{align}\label{inequ4}
-I(r)\leq -c_1 \int_\Omega r|\nabla u|^2\, \ud \bfx=:-I_0(r).
\end{align} 
Classical results for the modified equation with a linear mobility $\partial_t r = \nabla \cdot (c_1 r \nabla u_1 )$,  give the logarithmic Sobolev inequality
\begin{align}\label{logSobolev}
0\leq E^*(r(t))&\leq \frac{1}{\lambda}I_0(r(t)),\quad t\geq 0,
\end{align} 
with $\lambda=2 c_1 \tilde{\lambda}$. Then Gronwall's lemma gives the following result:
\begin{lemma}[Exponential convergence to equilibrium]
Let $b(x)$ be a uniformly convex potential, that is, $ \emph{Hess}(b) \geq \tilde{\lambda} I$ for some $\tilde{\lambda} >0$. Then for any weak solution $r(t)$ to equation \eqref{pme_gf_mod} with $E_1^*(r_0<\infty$ we have exponential convergence to equilibrium, 
\begin{align*}
E_1^*(r(t))&\leq  E_1^*(r_0) e^{-\lambda t},
\end{align*}
where $\lambda=2 c_1 \tilde{\lambda}$.
\end{lemma}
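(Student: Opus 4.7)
My plan is to chain together the three ingredients the excerpt has already prepared --- the relative-entropy dissipation identity \eqref{rel_ent_dissi}, the mobility comparison \eqref{inequ4}, and the logarithmic-Sobolev-type estimate \eqref{logSobolev} --- into a closed differential inequality for $E_1^*(r(t))$, and then integrate via Gronwall's lemma. This is the standard Bakry--\'Emery route, and all the nontrivial structural work has already been done upstream.

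Explicitly, for a weak solution $r(t)$ of \eqref{pme_gf_mod}, I would first invoke
\begin{align*}
\frac{\ud}{\ud t} E_1^*(r(t)) = -I(r(t)),
\end{align*}
which is \eqref{rel_ent_dissi} and uses mass conservation together with the fact that $u_{1,\infty}$ is constant on $\Omega$ to kill the Bregman cross-term in \eqref{bregman0}. The pointwise lower bound $1-\varepsilon_2 b\geq c_1$ from \eqref{bound_b} then gives $I(r)\geq I_0(r)$, and chaining with \eqref{logSobolev} produces
\begin{align*}
\frac{\ud}{\ud t} E_1^*(r(t)) \leq -I_0(r(t)) \leq -\lambda\, E_1^*(r(t)), \qquad \lambda = 2c_1\tilde{\lambda}.
\end{align*}
Setting $y(t) := E_1^*(r(t))$ reduces everything to the scalar ODE $\dot{y}\leq -\lambda y$ with initial datum $y(0) = E_1^*(r_0) < \infty$, and Gronwall's lemma immediately yields the claimed bound $E_1^*(r(t))\leq E_1^*(r_0)\, e^{-\lambda t}$.

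The only step that does real work here is the log-Sobolev estimate \eqref{logSobolev} itself; the rest is bookkeeping. The excerpt attributes it to classical results for the reference equation $\partial_t r = \nabla\cdot(c_1 r\nabla u_1)$, and to self-contain the argument I would verify geodesic convexity of $E_1$ in the Wasserstein sense. The Boltzmann piece $\int r(\log r - 1)\,\ud\bfx$ and the internal-energy correction $\tfrac12\varepsilon_1\int r^2\,\ud\bfx$ are both displacement convex with nonnegative modulus by McCann's criterion, while the potential piece $\varepsilon_2\int rb\,\ud\bfx$ inherits a strictly positive modulus from $\mathrm{Hess}(b)\geq\tilde\lambda I$; summing, $E_1$ is strictly geodesically convex. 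The standard Bakry--\'Emery/HWI argument then delivers \eqref{logSobolev}, with the factor $c_1$ entering simply as a time rescaling through the mobility. I would expect no difficulty beyond carefully tracking constants, and in particular the proof makes no use of any special structure of the nonlinear diffusion $\varepsilon_1 r$ other than its convexity in $r$.
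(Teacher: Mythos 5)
Your proposal is correct and follows essentially the same route as the paper: you chain the relative-entropy dissipation identity \eqref{rel_ent_dissi}, the mobility lower bound \eqref{inequ4}, and the logarithmic Sobolev inequality \eqref{logSobolev} into the closed inequality $\frac{\ud}{\ud t}E_1^*\leq-\lambda E_1^*$ and then invoke Gronwall, which is exactly the Bakry--\'Emery chain the paper lays out. The only difference is cosmetic: where the paper simply cites ``classical results'' for the reference equation $\partial_t r=\nabla\cdot(c_1 r\nabla u_1)$ to obtain \eqref{logSobolev}, you sketch a self-contained justification via displacement (geodesic) convexity of each term of $E_1$, which is a standard and valid way to supply that ingredient.
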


\begin{remark}
We have shown exponential convergence to equilibrium for a uniformly convex potential $b(x)\in H^1(\Omega)$ in terms of a logarithmic Sobolev inequality. We can generalize this result to bounded perturbations of the potential $b$ using the results in \cite{AMTU2001,gross1990notes,holley1987logarithmic}.
In particular, denoting $r^b_\infty$ the equilibrium solution of equation \eqref{pme_gf_mod} with potential $b$, we obtain exponential convergence to equilibrium $r^{\tilde{b}}_{\infty}$ for the case $\tilde{b}(x)=b(x)+p(x)$ with a perturbation $p(x) \in H^1(\Omega)\cap L^\infty(\Omega)$ of zero mass, $\int_\Omega p(x)\, \ud \bfx =0$ and $0<a_1\leq r^p_\infty\leq a_2<\infty$. 
\end{remark}
\begin{remark}
As we impose no-flux boundary conditions on the domain, we expect exponential convergence also for non-convex potentials $b(\bfx) \in W^{1,\infty}(\Omega)$. This is because the no-flux boundary conditions can be interpreted as a convex potential taking the value $+\infty$ outside the domain. For example, \cite{Alasio:2017th} considers a general Fokker--Planck equation with nonlinear diffusion and nonlocal terms and shows that it is possible to recover the gradient-flow formulation of the problem in a bounded domain $\Omega$ from a sequence of problems in the whole space with a strong confining potential that becomes infinity outside $\Omega$ in the limit.
Hence, we can intuitively replace the no-flux boundary conditions by a strong confining potential that becomes infinity outside the region of interest.

\end{remark}

\subsection{Global in time existence for \texorpdfstring{$(m_2,E_2)$}{(m2,E2)}.}
Next we discuss the analysis of the full GF using the second entropy pair \eqref{pair2}. This pair has the advantage  that we automatically 
obtain the necessary bounds on $r$, which ensure that the solution stays inside the set $\mathcal S$ \eqref{equ:set}.
More specifically, the entropy functional has a structure allowing us to use the boundedness by entropy method  \cite{burger2010nonlinear,jungel2015boundedness}. We recall that the first pair \eqref{pair1} does not provide this property. 

We consider the full GF 
\begin{align}\label{pme2}
\partial_t r &= \nabla \cdot \left[m_2(r)\nabla\frac{\delta E_2}{\delta r}\right],
\end{align}
using the second entropy-mobility pair \eqref{pair2}, which we rewrite below for ease of reference:
\begin{align} \label{pair2b}
m_2(r)=r(1-\varepsilon_1 r-\varepsilon_2 b), \qquad 
E_2(r)=\int_\Omega r\left[\log\left(\frac{r}{1-\varepsilon_1 r -\varepsilon_2 b}\right)-1\right] \ud \bfx.
\end{align}	

We look for a weak solution $r: \Omega \times (0,T) \to \mathcal{S}$ that satisfies the formulation
\begin{align}\label{weaksol}
\int_0^T \! \!  \left[ \langle \partial_t r,  \Phi \rangle_{H^{-1},H^1} + \int_\Omega \left(m_2(r) \nabla \frac{\partial E_2}{\partial r} \right)\cdot \nabla \Phi\ud \bfx \right] \, \ud t&=0,
\end{align}
for all $\Phi \in L^2 (0,T, H^1(\Omega))$.

\begin{theorem}[Global existence]\label{theorem2}
Let $T>0$ and $b:\Omega \to [0,1/\varepsilon_2)$ be a given function in $H^1(\Omega)$. Furthermore let $r_0:\Omega \to \mathcal{S}^\circ$, where $\mathcal{S}$ is defined by \eqref{equ:set}, be a measurable function such that $E_2( r_0)<\infty$. Then, there exists a weak solution $r:\Omega\times (0,T)\to \mathcal{S}$ to equation \eqref{pme2} in the sense of \eqref{weaksol} such that 
\begin{align*}
\partial_t  r \in L^2(0,T;H^1(\Omega)'),\qquad 
 r \in L^2(0,T;H^1(\Omega )).
\end{align*}
Moreover, the solution satisfies the following entropy dissipation inequality:
\begin{align}
E_2(r(t))+\int_0^t \int_{\Omega} C_1|\nabla  r|^2 \,\ud \bfx\leq E_2(r(0)) + Ct,
\end{align}
for some constants $C_1>0$, $C\geq0$ depending only on $\varepsilon_1$, $\varepsilon_2$, and $\nabla b$.
\end{theorem}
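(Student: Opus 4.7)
The plan is to adapt the boundedness-by-entropy method of J\"ungel to the GF \eqref{pme2}. The starting observation is that $E_2$ is strictly convex on the interior of the physical simplex $\mathcal S$ and diverges on $\partial \mathcal S$, so the entropy variable $u_2 = \delta E_2/\delta r$ defines a smooth bijection between $\mathbb R$ and $\mathcal S^\circ$ for fixed $b$. A solution constructed through $u_2$ therefore takes values in $\mathcal S$ by construction, bypassing the need for a separate maximum-principle argument.

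To produce approximate solutions, I would discretize in time by implicit Euler with step $\tau>0$, working in the entropy variable and adding a small elliptic regularization $\tau(-\Delta u_2^{k+1}+u_2^{k+1})$ to ensure coercivity in $H^1(\Omega)$ with the natural no-flux (homogeneous Neumann) boundary conditions. At each time step, a Leray--Schauder fixed-point argument combined with monotonicity of the inverse map $u_2 \mapsto r(u_2,b)$ yields $u_2^{k+1} \in H^1(\Omega)$, from which the approximate density $r^{k+1} = r(u_2^{k+1},b) \in \mathcal S^\circ$ is recovered pointwise.

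The crucial estimate comes from the entropy dissipation. Testing the time-discrete equation against $u_2^{k+1}$ and using convexity of $E_2$ gives
\begin{equation*}
E_2(r^{k+1})-E_2(r^k)+\tau \int_\Omega m_2(r^{k+1}) |\nabla u_2^{k+1}|^2 \, \ud \bfx \le 0.
\end{equation*}
To convert this into an $H^1$-bound on $r$, I would rewrite the flux as $J = m_2 \nabla u_2 = A(r,b)\nabla r + B(r,b)\nabla b$, where
\begin{equation*}
A = \frac{(1-\varepsilon_2 b)^2}{1-\varepsilon_1 r-\varepsilon_2 b}, \qquad B = \frac{\varepsilon_2 r(1-\varepsilon_2 b)}{1-\varepsilon_1 r-\varepsilon_2 b},
\end{equation*}
so that $m_2|\nabla u_2|^2 = |J|^2/m_2$. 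On $\mathcal S$ one has $A \ge c_1^2$, together with the uniform bounds $m_2/A^2 = r(1-\varepsilon_1 r - \varepsilon_2 b)^3/(1-\varepsilon_2 b)^4 \le 1/(\varepsilon_1 c_1^4)$ and $B/A = \varepsilon_2 r/(1-\varepsilon_2 b) \le \varepsilon_2/\varepsilon_1$ (using $\varepsilon_1 r \le 1-\varepsilon_2 b$ on $\mathcal S$). Writing $A\nabla r = J - B\nabla b$ and applying Young's inequality produces pointwise
\begin{equation*}
|\nabla r|^2 \le \frac{2}{\varepsilon_1 c_1^4}\,\frac{|J|^2}{m_2} + \frac{2\varepsilon_2^2}{\varepsilon_1^2}|\nabla b|^2.
\end{equation*}
Rearranging, summing over $k$ and using $b \in H^1(\Omega)$ produces the claimed entropy dissipation inequality together with a uniform $L^2(0,T;H^1(\Omega))$-bound on the time-interpolated $r_\tau$.

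For passage to the limit, the equation together with the flux bound $J \in L^2$ gives $\partial_t r_\tau$ bounded in $L^2(0,T;H^1(\Omega)')$. The Aubin--Lions lemma then yields strong convergence of $r_\tau$ in $L^2(\Omega_T)$ and a.e. convergence along a subsequence, and the pointwise constraint $r \in \mathcal S$ is inherited in the limit. Weak convergence of $J_\tau$ in $L^2$, combined with a.e. convergence of the nonlinearities $A(r_\tau,b)$ and $B(r_\tau,b)$, identifies the flux in the weak formulation. The main obstacle is the third step: both $A$ and $B$ blow up as $\varepsilon_1 r + \varepsilon_2 b \to 1$, so it is not immediately clear that the dissipation controls $\|\nabla r\|_{L^2}^2$. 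The specific algebraic structure of the second entropy, which makes the ratio $B/A$ bounded uniformly on $\mathcal S$, is precisely what makes the estimate go through — this is why the first entropy-mobility pair does not lend itself as readily to the same argument.
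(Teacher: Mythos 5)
Your proposal is correct and follows the same overall skeleton as the paper: implicit Euler discretization in the entropy variable with an added $\tau(-\Delta+\mathrm{id})$ regularization, a fixed-point argument for each elliptic problem, entropy dissipation to generate \emph{a priori} bounds, and Aubin--Lions to pass $\tau\to 0$. The one place where you diverge from the paper in a substantive way is the formulation of the key dissipation estimate (the paper's Lemma~\ref{lemma7}). The paper writes $\nabla u_2 = \tfrac{1-\varepsilon_2 b}{(1-\varepsilon_1 r-\varepsilon_2 b)^2}\bigl[(1-\varepsilon_2 b)\tfrac{\nabla r}{r}+\varepsilon_2\nabla b\bigr]$, invokes the pointwise bound $0<1-\varepsilon_1 r-\varepsilon_2 b<1$ to discard the denominator, and after Young's inequality lands on a lower bound of the form $\int C_1|\nabla\sqrt{r}|^2 - C$; this is then upgraded to a bound on $\|\nabla r\|_{L^2}$ using the \emph{a priori} upper bound $r\le (1-\varepsilon_2 b)/\varepsilon_1$, and the accompanying Lemma~\ref{lemma8} records the $H^1$-estimate at the level of $\sqrt{r}_\tau$. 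You instead factor the flux as $J=A\nabla r+B\nabla b$ and observe that, although $A$ and $B$ individually blow up as $\varepsilon_1 r+\varepsilon_2 b\to 1$, the ratios $m_2/A^2$ and $B/A$ stay uniformly bounded on $\mathcal S$; solving $A\nabla r = J-B\nabla b$ and applying Young then gives a pointwise bound on $|\nabla r|^2$ in terms of $m_2|\nabla u_2|^2 = |J|^2/m_2$ and $|\nabla b|^2$, i.e., the $L^2(0,T;H^1)$-bound on $r_\tau$ directly. This is a genuinely cleaner route to the precise statement in the theorem (which is phrased with $|\nabla r|^2$, not $|\nabla\sqrt r|^2$), and it makes explicit that the boundedness of $B/A$ -- a feature specific to the pair $(E_2,m_2)$ -- is what makes the estimate close; the paper leaves that observation implicit. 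The remaining steps (Aubin--Lions, identification of the limit flux via a.e. convergence of the coefficients, lower semicontinuity of $E_2$) coincide with the paper's.
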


The proof of Theorem \ref{theorem2} is based on an implicit in time discretization and subsequent regularization of equation \eqref{pme2}. This approach has been used for many nonlinear PDEs with a gradient flow structure, hence we state the main steps only. The proof for the AGF equation \eqref{pme_d} is based on the same idea, but requires a more detailed analysis and will be presented in Section \ref{sec:AGF_analysis}. 

To show global in time existence for the full GF, we discretize equation \eqref{pme2} using the implicit Euler scheme. More specifically, let $N\in \mathbb{N}$ and consider the discretization of $(0,T]$ into subintervals
\begin{equation}\label{time_disc}
(0,T]=\bigcup_{k=1}^N ((k-1)\tau,k\tau], \quad \text{with} \quad \tau=\frac{T}{N}.
\end{equation} Then for a given function $r_{k-1} \in \mathcal{S}$ approximating $r$ at time $\tau(k-1)$, we obtain a recursive sequence of elliptic problems, which are then regularized by higher-order terms, that is,
\begin{align}\label{pme_reg3}
\frac{ r_k- r_{k-1}}{\tau} &=\nabla\cdot\left[  m_2(r_k) \nabla u_k \right ] +\tau( \Delta u_k-u_k),
\end{align} 
where $u_k$ is the discretized entropy variable $u_2$ in \eqref{ent_var2}.
The regularization terms are needed to show existence of weak solutions to a linearized version of the problem \eqref{pme_reg3} using Lax--Milgram. The existence of the corresponding nonlinear problem is obtained by applying Schauder's fixed point theorem. The following Lemma guarantees that our solution lies in the correct set using the boundedness by entropy method, \cite{jungel2015boundedness}. 

\begin{lemma}\label{lemma6}
The entropy density 
\begin{align} \label{pair2density}
h:\mathcal{S}^\circ\to \mathbb{R}, \qquad r \mapsto r\left[\log \left( \frac{r}{1-\varepsilon_1 r-\varepsilon_2 b}\right)-1\right],
\end{align}
is strictly convex and belongs to $C^2(\mathcal{S}^\circ).$ Its derivative $h':\mathcal{S}^\circ\to \mathbb{R}$ is invertible and the inverse of second derivative $h'':\mathcal{S}^\circ\to \mathbb{R}$ is uniformly bounded.
\end{lemma}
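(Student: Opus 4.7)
The plan is to work entirely by direct computation, treating $b=b(\bfx)$ as a fixed parameter in $[0,1/\varepsilon_2)$ and writing $g(r) := 1-\varepsilon_1 r -\varepsilon_2 b$, which is strictly positive on $\mathcal{S}^\circ$. Expanding $h(r) = r\log r - r\log g(r) - r$ and differentiating, one obtains
\begin{align*}
h'(r) &= \log\frac{r}{g(r)} + \frac{\varepsilon_1 r}{g(r)},\\
h''(r) &= \frac{1}{r} + \frac{2\varepsilon_1}{g(r)} + \frac{\varepsilon_1^2 r}{g(r)^2}.
\end{align*}
Both expressions are smooth in $r$ on $\mathcal{S}^\circ$ (the denominators do not vanish there), so $h\in C^2(\mathcal{S}^\circ)$. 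Every summand in $h''$ is strictly positive, yielding $h''>0$ and hence strict convexity of $h$; in particular $h'$ is strictly increasing. Note that $h'$ coincides with the entropy variable $u_2$ from \eqref{ent_var2}, which is a useful consistency check.

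For invertibility of $h'$ I would combine strict monotonicity with an inspection of the limits at the two endpoints of the admissible interval for $r$. As $r\to 0^+$, the term $\log r$ forces $h'(r)\to-\infty$; as $r\to (1-\varepsilon_2 b)/\varepsilon_1$ from below, $g(r)\to 0^+$ and both $\log(r/g(r))$ and $\varepsilon_1 r/g(r)$ diverge to $+\infty$. Hence $h'$ is a continuous, strictly monotone bijection from the open interval $\mathcal{S}^\circ$ onto $\mathbb{R}$, with a $C^1$ inverse provided by the classical inverse function theorem together with $h''>0$.

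Finally, to bound $(h'')^{-1}$ uniformly, I would exploit the elementary pointwise inequality $h''(r)\ge 1/r$ read off from the first summand. This gives $0<1/h''(r)\le r$, and since every $r\in \mathcal{S}^\circ$ satisfies $\varepsilon_1 r <1-\varepsilon_2 b \le 1$, one concludes $1/h''(r)<1/\varepsilon_1$. The main (and really only) subtle point is to check that this bound is genuinely uniform over $\bfx\in\Omega$ as well: this is immediate because the right-hand side $1/\varepsilon_1$ is completely independent of $b$, so the estimate holds across the full admissible range and uniformly in the spatial variable, which is exactly what the boundedness-by-entropy framework requires.
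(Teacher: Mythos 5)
Your proof is correct and follows essentially the same route as the paper: compute $h'$ and $h''$ explicitly, read off strict convexity and $C^2$ regularity from $h''>0$, and deduce invertibility and the uniform bound on $1/h''$. Where the paper merely asserts that $h'$ maps $\bigl]0,(1-\varepsilon_2 b)/\varepsilon_1\bigr[$ onto $\mathbb{R}$ and that the bound on $1/h''$ is ``easily deduced,'' you supply the missing details (the endpoint limits of $h'$, and the explicit chain $1/h''\le r<1/\varepsilon_1$, uniform in $\bfx$), which is a welcome tightening but not a different argument.
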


\begin{proof}
We have that
\begin{equation*}
h'= \log  r -\log (1-\varepsilon_1 r-\varepsilon_2 b)+\frac{\varepsilon_1 r}{1-\varepsilon_1 r-\varepsilon_2 b},
\end{equation*}
and
\begin{equation*}
h''=\frac{1}{r}+\frac{2\varepsilon_1}{1-\varepsilon_1 r-\varepsilon_2 b}+\frac{\varepsilon_1^2 r}{(1-\varepsilon_1 r-\varepsilon_2 b)^2}.
\end{equation*}
As $h''$ is positive on the set $\mathcal{S}^\circ$, $h$ is strictly convex. We can easily deduce that the inverse of $h''$ is bounded on $\mathcal{S}^\circ$. Since $b$ is fixed, $h'$ is a one to one mapping from $]0,(1-\varepsilon_2 b)/\varepsilon_1[$ to $\mathbb{R}$  for every $x\in \Omega$. Hence $h'$ is invertible.
\end{proof}

Lemma \ref{lemma6} allows us to employ the transformation from the entropy variable $u_2\in \mathbb{R}$ back to the original variable $r$ and ensures that $r\in \mathcal{S}^\circ$. 

In order to provide enough regularity for passing to the limit $\tau\to 0$, we show an entropy dissipation relation for the weak formulation of the time discretization \eqref{pme_reg3}, that is,
\begin{align}
\label{pme_reg4}
\frac{1}{\tau}\int_\Omega( r_k- r_{k-1}) \Phi \, \ud \bfx &+\int_\Omega \nabla \Phi^T\, m_2(r_k) \nabla u_k \ud \bfx+\tau R(\Phi,u_k)=0,
\end{align} 
for $\Phi\in H^1(\Omega)$, where $r_k= h'^{-1}(u_k)$ and
\begin{equation*}
R(\Phi,u_k)=\int_{\Omega} \left( \Phi u_k+\nabla \Phi\cdot \nabla u_k\right)\ud \bfx.
\end{equation*}

As the entropy density $h$ is convex, we have $h(\varphi_1)-h(\varphi_2)\leq h'(\varphi_1)\cdot(\varphi_1-\varphi_2)$ for all $\varphi_1,\varphi_2\in\mathcal{S}$. If we choose $\varphi_1=r_k$ and $\varphi_2= r_{k-1}$ and using $h'( r_k)= u_k$, we obtain
\begin{align}\label{inequ5} 
\frac{1}{\tau}\int_{\Omega}&
 (r_k- r_{k-1})
 u_k\,\ud \bfx\geq\frac{1}{\tau}\int_{\Omega} \left[
h( r_k)-h( r_{k-1})
\right] \ud \bfx.
\end{align}
Inserting \eqref{inequ5} in equation \eqref{pme_reg4} with the test function $\Phi=u_k$ leads to
\begin{align}
\label{inequ6}
\int_{\Omega}h( r_k)\,\ud \bfx+\tau \! \! \int_{\Omega} \nabla u_k^T\, m_2(r_k) \nabla u_k  \ud \bfx+\tau ^2 R(u_k,u_k)\leq \int_{\Omega}h( r_{k-1})\,\ud \bfx.
\end{align}

\begin{lemma}\label{lemma7}
Let $ b :\Omega \rightarrow [0,1/\varepsilon_2)$ be such that $\nabla b \in L^2 (\Omega)$ and let $r \in L^2(\Omega), r\in \mathcal{S}^\circ$ a.e. be such that $u=h'(r)\in H^1(\Omega)$. Then, there exist two constants $C_1>0$, $C \geq 0$ depending only on $\varepsilon_1, \varepsilon_2$ and $\| \nabla b\|_{L^2 (\Omega)}$ such that 
\begin{align*}
\int_\Omega \nabla u_2^T m_2(r) \nabla u_2 \, \ud \bfx &\geq \int_\Omega C_1|\nabla \sqrt{ r}|^2\,\ud \bfx - C.
\end{align*}
\end{lemma}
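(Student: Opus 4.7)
The plan is to exploit the algebraic simplification afforded by the auxiliary variable $q := r/\psi$, where $\psi := 1-\varepsilon_1 r - \varepsilon_2 b$. Since $\log r - \log \psi = \log q$ and $\varepsilon_1 r/\psi = \varepsilon_1 q$, the entropy variable in \eqref{ent_var2} collapses to
\begin{equation*}
u_2 = \log q + \varepsilon_1 q, \qquad \nabla u_2 = \frac{1+\varepsilon_1 q}{q}\,\nabla q.
\end{equation*}
Combined with $m_2(r) = r\psi = q\psi^2$ and the identity $1-\varepsilon_2 b = \psi + \varepsilon_1 r = \psi(1+\varepsilon_1 q)$, a short computation yields the strikingly clean identity
\begin{equation*}
m_2(r)|\nabla u_2|^2 = \frac{(1-\varepsilon_2 b)^2}{q}|\nabla q|^2 = 4(1-\varepsilon_2 b)^2|\nabla \sqrt q|^2.
\end{equation*}
This substitution is the crux of the proof: a direct expansion of $m_2|\nabla u_2|^2$ in terms of $\nabla r$ and $\nabla b$ produces a cross term weighted by $1/\psi^3$, which cannot be absorbed by Young's inequality since $\psi$ is not uniformly bounded away from zero on $\mathcal{S}^\circ$.

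It then remains to bound $\int (1-\varepsilon_2 b)^2|\nabla \sqrt q|^2$ from below in terms of $\int|\nabla \sqrt r|^2$. Differentiating $q = r/\psi$ and using $\nabla r = 2\sqrt r\,\nabla \sqrt r$ gives
\begin{equation*}
2\psi^{3/2}\nabla \sqrt q = 2(1-\varepsilon_2 b)\nabla \sqrt r + \varepsilon_2\sqrt r\,\nabla b,
\end{equation*}
which I would invert to express $\nabla \sqrt r$ linearly in terms of $\nabla \sqrt q$ and $\nabla b$. Applying $(x+y)^2 \leq 2x^2 + 2y^2$ together with the trivial pointwise bound $\psi^3 \leq (1-\varepsilon_2 b)^3$ (valid since $\varepsilon_1 r \geq 0$) yields
\begin{equation*}
|\nabla \sqrt r|^2 \leq 2(1-\varepsilon_2 b)|\nabla \sqrt q|^2 + \frac{\varepsilon_2^2 r}{2(1-\varepsilon_2 b)^2}|\nabla b|^2.
\end{equation*}

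Rearranging this inequality, multiplying by $2(1-\varepsilon_2 b)$, and invoking the standing assumption $1-\varepsilon_2 b \geq c_1$ from \eqref{bound_b} (which also furnishes $1/(1-\varepsilon_2 b) \leq 1/c_1$) gives the pointwise dissipation estimate
\begin{equation*}
4(1-\varepsilon_2 b)^2|\nabla \sqrt q|^2 \geq 2c_1|\nabla \sqrt r|^2 - \frac{\varepsilon_2^2}{c_1}\,r\,|\nabla b|^2.
\end{equation*}
Integrating over $\Omega$ and absorbing the $r$-factor via the bound $r \leq 1/\varepsilon_1$ (which follows from $r \in \mathcal{S}$, i.e.\ $\varepsilon_1 r + \varepsilon_2 b \leq 1$) yields the claim with $C_1 = 2c_1$ and $C = \varepsilon_2^2 \|\nabla b\|_{L^2(\Omega)}^2/(c_1\varepsilon_1)$. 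The sole difficulty of the proof lies in discovering the substitution $q = r/\psi$; once it is identified, every subsequent step reduces to an elementary manipulation together with the a priori bounds on $b$ and on $r$ imposed by $\mathcal{S}$.
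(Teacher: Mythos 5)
Your proof is correct and follows a genuinely different route from the paper's. The paper expands $\nabla u_2$ directly, obtaining
\begin{equation*}
\nabla u_2 \;=\; \frac{1-\varepsilon_2 b}{(1-\varepsilon_1 r-\varepsilon_2 b)^2}\left[(1-\varepsilon_2 b)\frac{\nabla r}{r}+\varepsilon_2\nabla b\right],
\end{equation*}
multiplies by $m_2(r)$, and then uses $0<1-\varepsilon_1 r-\varepsilon_2 b<1$ on $\mathcal S^\circ$ to \emph{discard} the surviving factor $(1-\varepsilon_1 r-\varepsilon_2 b)^{-3}$ before applying Young's inequality to the cross term inside the bracket. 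Your substitution $q=r/(1-\varepsilon_1 r-\varepsilon_2 b)$ bypasses that first estimate entirely: it collapses $u_2$ to $\log q+\varepsilon_1 q$ and delivers the exact pointwise identity $m_2(r)|\nabla u_2|^2=4(1-\varepsilon_2 b)^2|\nabla\sqrt q|^2$, deferring the cross term to the comparison between $\nabla\sqrt q$ and $\nabla\sqrt r$, where it carries only bounded weights. Both approaches yield constants of the same flavor, controlled by the lower bound $c_1$ in \eqref{bound_b} and the a priori bound $r\leq 1/\varepsilon_1$ coming from $\mathcal S$.

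One caveat: your stated justification for why the substitution is indispensable is inaccurate. You argue that a direct expansion produces a cross term weighted by $(1-\varepsilon_1 r-\varepsilon_2 b)^{-3}$ which ``cannot be absorbed by Young's inequality since $\psi$ is not uniformly bounded away from zero.'' That would only be an obstruction if one insisted on keeping the $\psi^{-3}$ weight. Since the whole square $\bigl|(1-\varepsilon_2 b)\nabla r/r+\varepsilon_2\nabla b\bigr|^2$ is non-negative and $\psi^{-3}\geq 1$ on $\mathcal S^\circ$, one may simply drop the weight before expanding and applying Young --- which is exactly what the paper does. So the substitution $q=r/\psi$ is a clean and illuminating simplification (it makes the identity exact and avoids any preliminary inequality), but it is not forced by an obstruction in the direct approach.
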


\begin{proof}
Using the definition of $u_2$ \eqref{ent_var2}, we have
\begin{align*}
\nabla u_2 &=\frac{\nabla r}{r}+\frac{\varepsilon_1 \nabla r+\varepsilon_2 \nabla b}{1-\varepsilon_1 r-\varepsilon_2 b}+\frac{\varepsilon_1 \nabla r}{1-\varepsilon_1 r-\varepsilon_2 b}+\frac{\varepsilon_1 r (\varepsilon_1 \nabla r+\varepsilon_2 \nabla b)}{(1-\varepsilon_1 r-\varepsilon_2 b)^2}\\
&=\left(\frac{1}{r}+\frac{2\varepsilon_1}{1-\varepsilon_1 r-\varepsilon_2 b}+\frac{\varepsilon_1^2 r}{(1-\varepsilon_1 r-\varepsilon_2 b)^2}\right) \nabla r + \left(\frac{\varepsilon_2}{1-\varepsilon_1 r-\varepsilon_2 b}+\frac{\varepsilon_1 \varepsilon_2 r}{(1-\varepsilon_1 r-\varepsilon_2 b)^2}\right) \nabla b\\
&=\frac{1-2\varepsilon_2b+\varepsilon_2^2 b^2}{r(1-\varepsilon_1 r-\varepsilon_2 b)^2}\nabla r +\frac{\varepsilon_2 (1-\varepsilon_2 b)}{(1-\varepsilon_1 r-\varepsilon_2 b)^2}\nabla b\\
&=\frac{(1-\varepsilon_2 b)}{(1-\varepsilon_1 r - \varepsilon_2 b)^2}\left((1-\varepsilon_2b) \frac{\nabla r}{r}+\varepsilon_2 \nabla b \right).
\end{align*}
As $r\in \mathcal{S}^\circ$ a.e., that is, $0<1-\varepsilon_1 r-\varepsilon_2 b <1$ a.e., it holds that 
\begin{equation*}
\frac{1}{(1-\varepsilon_1 r-\varepsilon_2 b)^2}\geq \frac{1}{1-\varepsilon_1 r-\varepsilon_2 b} \quad \text{a.e.},
\end{equation*}
which leads to
\begin{align*}
\int_\Omega \nabla u_2^T m_2(r) \nabla u_2 \, \ud\bfx &\geq \int_{\Omega}  r (1-\varepsilon_2 b)^2\left|(1-\varepsilon_2b) \frac{\nabla r}{r}+\varepsilon_2\nabla b \right|^2\,\ud \bfx.
\end{align*}
Using Young's inequality to estimate the mixed term, and $1-\varepsilon_2b >0$, $r\in \mathcal{S}^\circ$ a.e., and $\nabla b \in L^2(\Omega)$, leads to 
\begin{align*}
\int_\Omega \nabla u_2^T m_2(r) \nabla u_2\,  \ud\bfx \geq \int_{\Omega} C_1 |\nabla \sqrt{r}|^2\ud\bfx- C,
\end{align*}
for some constants $C_1>0$ and $C \geq 0$, as required.
\end{proof}

Finally, Lemma \eqref{lemma8} states the uniform \emph{a priori} estimates in the discrete time step $\tau$ arising from inequality \eqref{inequ6} and Lemma \ref{lemma7}.

\begin{lemma}[\emph{a priori} estimates]\label{lemma8}
There exists a constant $K\in\mathbb{R}^+$ (independent of $\tau$), such that the following bounds hold:
\begin{align*}
\| \sqrt{r}_\tau\|_{L^2(0,T;H^1(\Omega))}\leq K,  \qquad 
\sqrt{\tau}\|u_\tau\|_{L^2(0,T;H^1(\Omega))}\leq K,
\end{align*}
where $ r_\tau({\bf x},t)= r_k({\bf x})$ for ${\bf x}\in\Omega$ and $t\in ((k-1)\tau,k\tau]$ and $u_\tau$ the corresponding entropy variable. 
\end{lemma}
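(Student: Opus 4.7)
The plan is to obtain both uniform bounds by telescoping the discrete entropy dissipation inequality \eqref{inequ6} over the time steps $k=1,\dots,N$. Summing \eqref{inequ6} yields
\begin{equation*}
\int_\Omega h(r_N)\,\ud\bfx + \sum_{k=1}^N \tau \int_\Omega \nabla u_k^T m_2(r_k) \nabla u_k \,\ud\bfx + \sum_{k=1}^N \tau^2 R(u_k,u_k) \;\leq\; \int_\Omega h(r_0)\,\ud\bfx,
\end{equation*}
so the whole argument reduces to controlling the right-hand side uniformly in $\tau$ and bounding $\int_\Omega h(r_N)\,\ud\bfx$ from below. The assumption $E_2(r_0)<\infty$ handles the first; for the second, one checks that $h$ is bounded below on $\mathcal{S}$ (it is continuous and coercive only at the upper boundary of $\mathcal{S}^\circ$, where it blows up to $+\infty$), so $\int_\Omega h(r_N)\,\ud\bfx \geq -C'|\Omega|$.

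Next I would treat the two dissipation sums separately. For the first, I apply Lemma \ref{lemma7} pointwise in $k$ to obtain
\begin{equation*}
\tau \int_\Omega \nabla u_k^T m_2(r_k) \nabla u_k \,\ud\bfx \;\geq\; \tau C_1 \int_\Omega |\nabla\sqrt{r_k}|^2\,\ud\bfx \;-\; \tau C,
\end{equation*}
where the constants depend only on $\varepsilon_1,\varepsilon_2,\|\nabla b\|_{L^2(\Omega)}$. Summing over $k$ and using $\sum_{k=1}^N \tau = T$ yields
\begin{equation*}
C_1 \int_0^T \|\nabla\sqrt{r_\tau}\|_{L^2(\Omega)}^2\,\ud t \;\leq\; E_2(r_0) + CT + C'|\Omega|.
\end{equation*}
Combined with the pointwise bound $r_\tau \leq (1-\varepsilon_2 b)/\varepsilon_1 \leq 1/\varepsilon_1$ from $r_\tau\in\mathcal{S}$ a.e.\ (which gives $\|\sqrt{r_\tau}\|_{L^2(\Omega)}^2 \leq |\Omega|/\varepsilon_1$), this yields the first uniform bound $\|\sqrt{r_\tau}\|_{L^2(0,T;H^1(\Omega))}\leq K$.

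For the second bound, I use the explicit form $R(u_k,u_k) = \|u_k\|_{H^1(\Omega)}^2$ to get from the telescoped inequality
\begin{equation*}
\tau \sum_{k=1}^N \tau \|u_k\|_{H^1(\Omega)}^2 \;=\; \sum_{k=1}^N \tau^2 \|u_k\|_{H^1(\Omega)}^2 \;\leq\; E_2(r_0) + CT + C'|\Omega|,
\end{equation*}
which is exactly $\tau \|u_\tau\|_{L^2(0,T;H^1(\Omega))}^2 \leq K^2$, i.e.\ the desired bound on $\sqrt{\tau}\,u_\tau$.

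The only genuine subtlety, and the step I would treat most carefully, is the lower bound on $\int_\Omega h(r_N)\,\ud\bfx$: while $h$ is nonnegative in the classical entropy case $r(\log r - 1) + 1$, here $h$ as defined in \eqref{pair2density} is not obviously sign-definite, so one must verify that its infimum on $\mathcal{S}^\circ$ is finite. This follows from continuity of $h$ on the compact set $\{r:0\leq r\leq (1-\varepsilon_2 b_{\min})/\varepsilon_1\}$ away from the upper boundary together with the fact that $h\to+\infty$ as $\varepsilon_1 r+\varepsilon_2 b \to 1^-$, giving a uniform lower bound depending only on $\varepsilon_1,\varepsilon_2$ and $\|b\|_{L^\infty}$. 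Everything else is bookkeeping of the telescoping sum and the pointwise containment $r_\tau\in\mathcal{S}$ guaranteed by Lemma \ref{lemma6}.
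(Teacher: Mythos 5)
Your proof is correct and follows exactly the route the paper indicates: the paper simply states that Lemma \ref{lemma8} follows by telescoping the dissipation inequality \eqref{inequ6} and invoking Lemma \ref{lemma7}, and you supply precisely the missing bookkeeping (including the lower bound on $\int_\Omega h(r_N)\,\ud\bfx$, for which an even quicker observation is that $-r\log(1-\varepsilon_1 r-\varepsilon_2 b)\geq 0$ on $\mathcal{S}^\circ$ and $r(\log r-1)\geq -1$, so $h\geq -1$ pointwise).
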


This together with the use of Aubin--Lions lemma allow to pass to the limit $\tau \to 0$, cf. Section \ref{sec:AGF_analysis} for more details.

\section{Global existence of the asymptotic gradient flow (AGF) structure.} \label{sec:AGF_analysis}

Finally we are addressing the question of global in time existence for the AGF \eqref{pme}. We shall see that we can prove
existence and uniqueness of solutions using the first entropy-mobility pair  \eqref{pair1} only. This choice may seem a bit inept, since the second entropy-mobility pair 
provides a better approximation of the stationary solutions to the AGF and the microscopic simulations. It also gives useful bounds on the
particle density $r$ for the respective GF structure, which ensure that solutions stay in the set $\mathcal{S}$. However its stronger nonlinear nature results in more complex higher-order terms, 
which we are not able to control at the moment. Our goal is to obtain a-prori estimates from the dissipation of the entropy, which cannot be uniform however in the higher-order terms.

More specifically, as 
\begin{align*}
\nabla u_2&= \frac{\nabla r}{r}+\frac{2\varepsilon_1 \nabla r +\varepsilon_2\nabla b}{1-\varepsilon_1 r-\varepsilon_2 b}+\frac{\varepsilon_1^2 r \nabla r+\varepsilon_1 \varepsilon_2 r\nabla b}{(1-\varepsilon_1 r-\varepsilon_2 b)^2}\\
&=\frac{(1-\varepsilon_2 b)}{(1-\varepsilon_1 r - \varepsilon_2 b)^2}\left((1-\varepsilon_2b) \frac{\nabla r}{r}+\varepsilon_2 \nabla b \right),
\end{align*}

equation \eqref{pme} can be written as

\begin{align}\label{equ:AGF_expl1}
\partial_t r&=\nabla \cdot \left[ m_2(r)\nabla u_2 -\frac{\varepsilon_1^2r^2}{1-\varepsilon_1r-\varepsilon_2b}\nabla r-\frac{\varepsilon_1\varepsilon_2r^2}{1-\varepsilon_2r-\varepsilon_2 b}\nabla b\right].
\end{align}

Formally, calculating the time derivative of the entropy functional $E_2$ gives

\begin{align}
\frac{\mathrm{d} E_2(r)}{\mathrm{d} t}&=-\int_\Omega \left[ m_2(r)\nabla u_2 -\frac{\varepsilon_1^2r^2}{1-\varepsilon_1r-\varepsilon_2b}\nabla r-\frac{\varepsilon_1\varepsilon_2r^2}{1-\varepsilon_2r-\varepsilon_2 b}\nabla b\right]\nabla u_2\, \ud\bfx\\
&=- \int_\Omega \frac{1-\varepsilon_2b}{(1-\varepsilon_1 r-\varepsilon_2 b)^2}\left[\frac{(1-\varepsilon_2 b)((1-\varepsilon_2b)^2-\varepsilon_1^2r^2)}{r(1-\varepsilon_1 r-\varepsilon_2b)}| \nabla r| ^2+r\varepsilon_2^2 | \nabla b| ^2\right.\\
&\quad \qquad \left.+(1-\varepsilon_2b)\varepsilon_2 \nabla r\nabla b+\frac{\varepsilon_2((1-\varepsilon_2b)^2-\varepsilon_1^2r^2)}{1-\varepsilon_1 r-\varepsilon_2 b}\nabla r \nabla b\right] \ud\bfx
\end{align}

It is not clear how to control the mixed terms especially as soon as the particle concentration reaches 
the maximum density, that is, if $\varepsilon_1 r +\varepsilon_2b \approx 1$, for which the last term can become dominant and arguments based on for example the Cauchy-Schwarz inequality do not yield a suitable estimate.

As a consequence we use the first entropy-mobility pair \eqref{pair1} in the proof of the global in time existence result Theorem \ref{theorem1}. Since it does not
provide any information about the volume constraint, we need to introduce a pre-factor in the flux term of \eqref{pme} and regularize the entropy to ensure that the solutions 
satisfy the maximum volume fraction constraint \eqref{equ:set}, that is, when $\varepsilon_1 r +\varepsilon_2 b = 1$. This pre-factor can be seen as a Lagrange multiplier for the constraint $r \in \mathcal S$.
We look for a weak solution $r:\Omega\times(0,T) \rightarrow \mathcal{S}$ to the equation
\begin{align} \nonumber
\partial_t r &= \nabla \cdot J_r\\
\label{pme_mod1}
(1-\varepsilon_1 r -\varepsilon_2 b)J_r&=(1-\varepsilon_1 r -\varepsilon_2 b)\left[   ( 1 +
 \varepsilon_1   r -\varepsilon_2 b) \nabla {  r} + \varepsilon_2 r \nabla { b} \big) \right],
\end{align}
in the sense of 
\begin{align}\label{weak_sol}
\int_0^T  \left[ \langle \partial_t r,  \Phi \rangle_{H^{-1},H^1} + \int_\Omega J_r\cdot \nabla \Phi \, \ud \bfx \right] \,\ud t &=0,
\end{align}
for all $\Phi \in L^2 (0,T, H^1(\Omega))$. In addition to the definition above, we will only consider weak solutions satisfying the entropy relation
\begin{align}\label{theorem1_2}
E_1(r(t))+\int_0^t \int_{\Omega} \frac{\varepsilon_1}{4}|\nabla  r|^2 \,\ud \bfx\leq E_1(r_0) + Ct,
\end{align}
for some constant $C\geq0$ depending only on $\varepsilon_1$, $\varepsilon_2$, and $\nabla b$.

\begin{theorem}[Global existence in the case of small volume fraction]\label{theorem1}
Let $T>0$, $b:\Omega \to [0,1/\varepsilon_2)$ a given function in $H^1(\Omega)$ and $E_1(r)$ be the entropy functional defined in \eqref{entropy}. Furthermore let $r_0:\Omega \to \mathcal{S}^\circ$, where $\mathcal{S}$ is defined by \eqref{equ:set}, be a measurable function such that $E_1( r_0)<\infty$. Then there exists a weak solution $r:\Omega\times (0,T)\to \mathcal{S}$ to equation \eqref{pme_mod1} in the sense of \eqref{weak_sol} and \eqref{theorem1_2} satisfying 
	\begin{align*}
\partial_t  r \in L^2(0,T;H^1(\Omega)'),\qquad 
 r \in L^2(0,T;H^1(\Omega )).
\end{align*}
\end{theorem}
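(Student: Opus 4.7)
The plan is to mimic the implicit time-discretisation / fixed-point / a-priori-estimate / Aubin--Lions scheme sketched for Theorem \ref{theorem2}, but working with the first entropy-mobility pair \eqref{pair1} and dealing with two new difficulties: the equation \eqref{pme_mod1} is only an \emph{asymptotic} gradient flow of $E_1$, so testing against $u_1=\delta E_1/\delta r$ does not produce a perfect dissipation square; and the entropy $E_1$ (unlike $E_2$, whose density \eqref{pair2density} blows up at $\partial\mathcal{S}$) does not diverge on the constraint boundary, so it does not automatically enforce $r\in\mathcal{S}$. The pre-factor $(1-\varepsilon_1 r -\varepsilon_2 b)$ artificially introduced in the flux of \eqref{pme_mod1} will play the role of a Lagrange multiplier that switches the flux off as soon as the constraint is saturated.

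For the discrete problem, following \eqref{time_disc}, I would partition $(0,T]$ in steps of length $\tau=T/N$ and, given $r_{k-1}\in\mathcal{S}$, look for $r_k$ via the entropy variable $u_k=\log r_k + \varepsilon_1 r_k + \varepsilon_2 b$ solving the regularised elliptic problem
\begin{equation*}
\frac{r_k-r_{k-1}}{\tau} = \nabla\cdot\Bigl[(1-\varepsilon_1 r_k-\varepsilon_2 b)\bigl((1+\varepsilon_1 r_k-\varepsilon_2 b)\nabla r_k + \varepsilon_2 r_k\nabla b\bigr)\Bigr] + \tau(\Delta u_k - u_k).
\end{equation*}
As for \eqref{pme_reg3}, I would freeze $\bar r\in L^2(\Omega)$ in the nonlinear coefficients, solve the resulting linear problem for $u_k\in H^1(\Omega)$ by Lax--Milgram (the $\tau(\Delta-I)$ term supplying the coercivity missing from the degenerate mobility), recover $r_k=(h_1')^{-1}(u_k)>0$ pointwise from the invertibility of $h_1'$ as a bijection $(0,\infty)\to\mathbb{R}$, and close a Schauder argument via the compact embedding $H^1\hookrightarrow L^2$. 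The pre-factor then prevents $r_k$ from crossing $\varepsilon_1 r_k + \varepsilon_2 b=1$, so $r_k\in\mathcal{S}$ follows without any bound coming from the entropy itself.

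The heart of the proof is the entropy-dissipation estimate. Testing with $u_k$, using the convex-duality inequality $h_1'(r_k)(r_k-r_{k-1})\ge h_1(r_k)-h_1(r_{k-1})$ exactly as in \eqref{inequ5}, and expanding $\nabla u_k=(1/r_k+\varepsilon_1)\nabla r_k+\varepsilon_2\nabla b$, the flux--gradient product $J_k\cdot\nabla u_k$ becomes a positive multiple of $(1-\varepsilon_1 r_k-\varepsilon_2 b)(1+\varepsilon_1 r_k-\varepsilon_2 b)(1+\varepsilon_1 r_k)|\nabla r_k|^2/r_k$ plus a mixed $\nabla r_k\cdot\nabla b$ contribution and a $|\nabla b|^2$ contribution. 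Young's inequality, combined with \eqref{bound_b} and $r_k\in\mathcal{S}$, should then absorb the mixed term and leave the claimed $\tfrac{\varepsilon_1}{4}\int|\nabla r_k|^2$ on the left-hand side, giving uniform bounds on $r_\tau$ in $L^2(0,T;H^1(\Omega))$ and on $\partial_t r_\tau$ in $L^2(0,T;H^1(\Omega)')$. Aubin--Lions then yields strong $L^2$-convergence of a subsequence, the pointwise constraint $r_\tau\in\mathcal{S}$ passes to the limit, and the nonlinear fluxes are identified by standard weak/strong convergence, while the $\tau$-regularisation vanishes.

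The main obstacle, compared to Theorem \ref{theorem2}, is precisely this a-priori estimate: since \eqref{pme_mod1} is not a full gradient flow, $J_k\cdot\nabla u_k$ is not a perfect square and the sign of its cross terms is not manifest, so it is exactly the pre-factor $(1-\varepsilon_1 r_k -\varepsilon_2 b)$ that tames the dangerous terms and keeps them absorbable by Young's inequality uniformly up to $\partial\mathcal{S}$. A careful balance of Young weights will be needed to extract the explicit coefficient $\varepsilon_1/4$ in \eqref{theorem1_2}; this is also what would fail if one tried to run the same argument directly on \eqref{pme} without the pre-factor, as already foreshadowed by the problematic terms in the expansion \eqref{equ:AGF_expl1} discussed at the start of this section.
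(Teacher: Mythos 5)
Your proposal has the right overall skeleton (implicit Euler, fixed point, entropy dissipation, Aubin--Lions), but it misidentifies which part of the construction actually keeps $r_k\in\mathcal{S}$, and this error propagates into a dissipation estimate that cannot produce the claimed $\tfrac{\varepsilon_1}{4}|\nabla r_k|^2$. You keep the unregularised entropy variable $u_k=\log r_k+\varepsilon_1 r_k+\varepsilon_2 b$ and rely on the pre-factor $(1-\varepsilon_1 r_k-\varepsilon_2 b)$ multiplying the discrete flux to confine $r_k$ to $\mathcal{S}$. But the derivative of the density of $E_1$ is a bijection $(0,\infty)\to\mathbb{R}$, so recovering $r_k$ from the Lax--Milgram solution $u_k\in H^1(\Omega)$ gives only $r_k>0$, with no upper bound; and a degenerate coefficient in an elliptic equation does not, by itself, supply a pointwise bound for a weak solution (the porous medium equation degenerates at $u=0$ and still reaches it). The paper's mechanism is different: it \emph{regularises the entropy}, adding $E_\tau=\tau\int_\Omega(1-\varepsilon_1 r-\varepsilon_2 b)[\log(1-\varepsilon_1 r-\varepsilon_2 b)-1]\,\ud\bfx$ so that the modified entropy variable $\tilde u$ in \eqref{u_reg} is a bijection from $\mathcal{S}^\circ$ onto $\mathbb{R}$ (Lemma~\ref{lemma1}); the inversion $r_k=\tilde h'^{-1}(\tilde u_k)$ then lands in $\mathcal{S}^\circ$ automatically, by boundedness-by-entropy. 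That is what ``introduce a pre-factor \emph{and} regularise the entropy'' means in the text preceding Theorem~\ref{theorem1}; you kept the first half and dropped the second, which is the one that actually does the confinement.

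The second gap is the a priori estimate. If you multiply the flux by $(1-\varepsilon_1 r_k-\varepsilon_2 b)$ inside the discrete equation, the coefficient of $|\nabla r_k|^2$ in $J_k\cdot\nabla u_k$ is $(1-\varepsilon_1 r_k-\varepsilon_2 b)(1+\varepsilon_1 r_k-\varepsilon_2 b)(1+\varepsilon_1 r_k)/r_k$, which vanishes as $\varepsilon_1 r_k+\varepsilon_2 b\to 1$; no choice of Young weights will then extract a uniform $\tfrac{\varepsilon_1}{4}|\nabla r_k|^2-C$. The paper never puts the pre-factor on the discrete flux. Instead it rewrites \eqref{pme} as $\partial_t r=\nabla\cdot[n(r)\nabla(\delta E_1/\delta r)+g(r)]$ in \eqref{pme_gf2}, with $n(r)=r(1-\varepsilon_2 b)+\varepsilon_1\varepsilon_2 r^2 b/(1+\varepsilon_1 r)$ and $g(r)$ free of $\nabla r$, so that the dissipation term is $\int[n(r)|\nabla\tilde u|^2+g(r)\cdot\nabla\tilde u]$, and the elementary split $r(1-\varepsilon_2 b)=r(1-\varepsilon_1 r-\varepsilon_2 b)+\varepsilon_1 r^2\geq\varepsilon_1 r^2$ on $\mathcal{S}$ gives a nondegenerate lower bound $\varepsilon_1 r^2|\nabla\tilde u|^2$, from which $\tfrac{\varepsilon_1}{4}|\nabla r|^2$ plus a useful $\tau^2$-weighted regularisation term follow (Lemma~\ref{lemma2}). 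Finally, the pre-factor in \eqref{pme_mod1} is needed only when passing to the limit $\tau\to0$: the estimate \eqref{apriori2} on the entropy-regularisation flux is merely $O(1)$, so one cannot show it vanishes; but after multiplying \eqref{limit1} by $(1-\varepsilon_1 r_\tau-\varepsilon_2 b)$ it turns into $\tau\varepsilon_1 n(r_\tau)(\varepsilon_1\nabla r_\tau+\varepsilon_2\nabla b)\to0$ strongly. Inserting the pre-factor already at the discrete stage, as you do, both changes the equation being approximated and fails to address this identification issue.
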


To prove Theorem \ref{theorem1}, it is convenient to rewrite \eqref{pme} as
\begin{align}\label{pme_gf2}
\partial_t r &= \nabla \cdot \left[n(r)\nabla\frac{\delta E_1}{\delta r}+g(r)\right],
\end{align}
where $E$ is given in \eqref{entropy}, 
\begin{equation*}
n(r)=r(1-\varepsilon_2 b)+\frac{\varepsilon_1 \varepsilon_2 r^2b}{1+\varepsilon_1 r},
\end{equation*}
and 
\begin{equation*} 
g(r)= \frac{\varepsilon_2 ^2 rb}{1+\varepsilon_1 r}\nabla b.
\end{equation*}
We note that \eqref{pme_gf2} is not in AGF form, since the second term of $n(r)$ is of order $\varepsilon^2$. However, this structure is more convenient for the analysis since $g$ does not depend on any derivatives of $r$, and it does not change the final result (as it is only a reformulation of the original equation \eqref{pme_d}). Note also that $n(r)\geq 0$ for $r\in \mathcal{S}$. 

The proof is based on the following approximation argument. We discretize equation \eqref{pme_gf2} in time using the implicit Euler scheme with time step $\tau>0$. This gives us a recursive sequence of elliptic problems, which we regularize to obtain sufficiently smooth solutions. More specifically, we perform the time discretization \eqref{time_disc}. Then for a given function $ r_{k-1} \in \mathcal{S}$, which approximates $r$ at time $\tau(k-1)$, we want to find $r_k\in \mathcal{S}$ and the associated entropy variable $\tilde{u}_k$ solving the regularized time discrete problem
\begin{align}
\label{pme_reg}
\frac{ r_k- r_{k-1}}{\tau} =\nabla\cdot\left[ n( r_k)
\nabla \tilde{u}_k+g(r_k) \right]+\tau( \Delta \tilde{u}_k-\tilde{u}_k).
\end{align} 
Here we use the modified entropy $\tilde{E} =E_1+E_\tau$, with 
\begin{align*}
	E_\tau =   \tau  \int_{\Omega}   (1-\varepsilon_1 r-\varepsilon_2 b) [\log (1-\varepsilon_1 r-\varepsilon_2 b)-1] \ud \bfx,
\end{align*}
and the associated entropy variable
\begin{align} \label{u_reg}
\tilde{u}=u_1 + u_\tau &=  \log  r + \varepsilon_1 r +\varepsilon_2 b -\tau \varepsilon_1 \log (1-\varepsilon_1 r-\varepsilon_2 b).
\end{align}

The additional term in the entropy provides an upper bound on the solution and the higher-order regularization term guarantees coercivity of the elliptic equation in $H^1(\Omega)$. This guarantees existence of weak solutions to a linearized version of equation \eqref{pme_reg} using Lax--Milgram. Then existence of solutions to the corresponding nonlinear problem follows from Schauder's fixed point theorem  \cite{zeidler1994nonlinear}. Finally uniform \emph{a priori} estimates in $\tau$ and the use of Aubin--Lions lemma (see for example \cite{zamponi2015analysis}) allow to pass to the limit $\tau \to 0$ leading to a weak solution of \eqref{pme_mod1}. 

\begin{lemma}\label{lemma1}
The entropy density 
\begin{align*}
\tilde{h}: ~ &\mathcal{S}^\circ\to \mathbb{R}, \\
&r \mapsto r(\log   r-1) + \frac{\varepsilon_1}{2} r^2 + \varepsilon_2  r  b  + \tau(1-\varepsilon_1 r-\varepsilon_2 b)[\log(1-\varepsilon_1 r-\varepsilon_2 b)-1],
\end{align*}
is strictly convex and belongs to $C^2(\mathcal{S}^\circ).$ Its derivative $\tilde{h}':\mathcal{S}^\circ\to \mathbb{R}$ is invertible and the inverse of the second derivative $\tilde{h}'':\mathcal{S}^\circ\to \mathbb{R}$ is uniformly bounded.
\end{lemma}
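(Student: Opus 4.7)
The plan is to mirror the proof of Lemma \ref{lemma6} for the second pair, with straightforward modifications to account for the extra regularization term in $\tilde{h}$. First I would compute the first and second derivatives of $\tilde{h}$ directly. Using that $\frac{d}{dr}[w(\log w - 1)] = -\varepsilon_1 \log w$ when $w = 1 - \varepsilon_1 r - \varepsilon_2 b$, I get
\begin{align*}
\tilde{h}'(r) &= \log r + \varepsilon_1 r + \varepsilon_2 b - \tau \varepsilon_1 \log(1-\varepsilon_1 r - \varepsilon_2 b),\\
\tilde{h}''(r) &= \frac{1}{r} + \varepsilon_1 + \frac{\tau \varepsilon_1^2}{1-\varepsilon_1 r - \varepsilon_2 b}.
\end{align*}
Note that $\tilde{h}'$ is exactly the entropy variable $\tilde{u}$ given in \eqref{u_reg}, as expected. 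Since $b$ is fixed in $H^1(\Omega)$ with $\varepsilon_2 b < 1$, both functions are of class $C^2$ on $\mathcal{S}^\circ$.

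Next I would establish strict convexity and the uniform bound on $(\tilde{h}'')^{-1}$ in one stroke. On $\mathcal{S}^\circ$ we have $r > 0$ and $1 - \varepsilon_1 r - \varepsilon_2 b > 0$, so every term in $\tilde{h}''$ is positive and, in particular,
\begin{equation*}
\tilde{h}''(r) \geq \varepsilon_1 > 0 \qquad \text{on } \mathcal{S}^\circ.
\end{equation*}
This immediately gives strict convexity of $\tilde{h}$ and the uniform bound $(\tilde{h}'')^{-1} \leq 1/\varepsilon_1$ on $\mathcal{S}^\circ$, independent of $\tau$. The uniformity in $\tau$ is important for the later passage to the limit $\tau \to 0$, so I would emphasize it explicitly.

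Finally, for the invertibility of $\tilde{h}'$ I would argue that, for each fixed $x \in \Omega$, the map $r \mapsto \tilde{h}'(r)$ is continuous and strictly increasing on the open interval $I_b := (0, (1-\varepsilon_2 b(x))/\varepsilon_1)$. I then need to check the limits at the endpoints. As $r \to 0^+$, the term $\log r \to -\infty$ while the remaining three terms stay bounded (because $1 - \varepsilon_2 b$ is bounded away from zero by \eqref{bound_b}, so $\log(1-\varepsilon_1 r - \varepsilon_2 b)$ has a finite limit), hence $\tilde{h}'(r) \to -\infty$. As $r \to (1-\varepsilon_2 b)/\varepsilon_1$ from below, $1 - \varepsilon_1 r - \varepsilon_2 b \to 0^+$, so $-\tau \varepsilon_1 \log(1-\varepsilon_1 r - \varepsilon_2 b) \to +\infty$ and therefore $\tilde{h}'(r) \to +\infty$. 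By the intermediate value theorem, $\tilde{h}': I_b \to \mathbb{R}$ is a bijection.

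I do not expect a real obstacle here; the computation is essentially an elementary exercise in calculus, and the only point that requires care is noting that the $\tau$-regularization $u_\tau = -\tau\varepsilon_1 \log(1-\varepsilon_1 r - \varepsilon_2 b)$ is precisely what forces $\tilde{h}' \to +\infty$ at the upper endpoint of $I_b$. Without it, one would only have $\tilde{h}' \to $ finite value at that endpoint (namely the first three terms evaluated there), so the inverse image of $\tilde{u}$ would not lie in $\mathcal{S}^\circ$ a priori. This is the main role of the regularization and should be pointed out, as it is what later allows the boundedness-by-entropy strategy to yield $r_k \in \mathcal{S}^\circ$ for each $k$.
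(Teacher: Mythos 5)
Your proof is correct and takes essentially the same route as the paper: compute $\tilde{h}'$ and $\tilde{h}''$, observe that $\tilde{h}''>0$ on $\mathcal{S}^\circ$ for strict convexity, bound $(\tilde{h}'')^{-1}$ from above, and argue invertibility of $\tilde{h}'$ as a monotone bijection from the interval $(0,(1-\varepsilon_2 b)/\varepsilon_1)$ onto $\mathbb{R}$. The paper's proof simply states the last two points ``follow from the same arguments as in Lemma \ref{lemma6}'', whereas you spell them out: the uniform bound $\tilde{h}''\geq\varepsilon_1$ (independent of $\tau$) and the endpoint limits $\tilde{h}'\to-\infty$ as $r\to0^+$ and $\tilde{h}'\to+\infty$ as $r\to(1-\varepsilon_2 b)/\varepsilon_1$. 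Your closing remark — that without the $\tau$-regularization $\tilde{h}'$ would tend to a finite value at the upper endpoint, so the inverse would not map into $\mathcal{S}^\circ$ — is a correct and worthwhile observation that the paper leaves implicit; it explains precisely why $E_\tau$ is added.
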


\begin{proof}
We have that
\begin{equation*}
\tilde{h}'= \log  r + \varepsilon_1 r +\varepsilon_2 b -\tau \varepsilon_1 \log (1-\varepsilon_1 r-\varepsilon_2 b),
\end{equation*}
and
\begin{equation*}
\tilde{h}''=\frac{1}{ r}+\varepsilon_1+\tau\frac{\varepsilon_1^2}{1-\varepsilon_1 r-\varepsilon_2 b}.
\end{equation*}
Since $\tilde{h}''$ is positive on the set $\mathcal{S}^\circ$, $\tilde{h}$ is strictly convex. The boundedness of $h''$ and the invertibility of $h'$ follow from the same arguments as in Lemma \ref{lemma6}.
\end{proof}

\subsection{Time discretization and regularization of equation \texorpdfstring{\eqref{pme_gf2}}{20}.}
The weak formulation of equation \eqref{pme_reg} is given by: 
\begin{align}
\label{pme_reg2}
\frac{1}{\tau}\int_\Omega( r_k- r_{k-1}) \Phi \, \ud \bfx +\int_\Omega  \left[ \nabla \Phi^T\, n(r_k) \nabla \tilde{u}_k +g(r_k) \nabla \Phi\right] \ud \bfx+\tau R(\Phi,\tilde{u}_k)=0,
\end{align} 
for $\Phi\in H^1(\Omega)$, where $r_k=\tilde h'^{-1}(\tilde{u}_k)$ and
\begin{equation*}
R(\Phi,\tilde{u}_k)=\int_{\Omega} \left( \Phi\tilde{u}_k+\nabla \Phi\cdot \nabla \tilde{u}_k\right)\ud \bfx.
\end{equation*}

\begin{lemma}
Let $r_{k-1} \in L^2(\Omega)$ and $\tau > 0$. Then there exists a weak solution of \eqref{pme_reg2}, given as $r_k = \tilde h'^{-1}(\tilde{u}_k) \in L^2(\Omega)$ with $\tilde{u}_k \in H^1(\Omega)$ .
\end{lemma}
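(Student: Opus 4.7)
The plan is to linearize the regularized time-discrete equation by freezing the argument of the nonlinear coefficients and then to apply Schauder's fixed-point theorem in $L^2(\Omega)$. The regularization term $\tau(\Delta\tilde u_k-\tilde u_k)$ has been engineered precisely so that the linearized bilinear form is coercive on the full space $H^1(\Omega)$, while Lemma \ref{lemma1} ensures that the change of variables $r=\tilde h'^{-1}(\tilde u)$ is well defined, continuous and takes values in the bounded set $\mathcal S^\circ$. Rellich--Kondrachov then delivers the required compactness.

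Concretely, for a given $v\in L^2(\Omega)$ I would set $r=\tilde h'^{-1}(v)$ pointwise, which by Lemma \ref{lemma1} is measurable, takes values in $\mathcal S^\circ$, and therefore lies in $L^\infty(\Omega)$. Then consider the linear auxiliary problem: find $\tilde u\in H^1(\Omega)$ such that
\begin{align*}
a(\tilde u,\Phi) &:= \int_\Omega n(r)\nabla\tilde u\cdot\nabla\Phi\,\ud\bfx + \tau\int_\Omega\left(\tilde u\,\Phi + \nabla\tilde u\cdot\nabla\Phi\right)\ud\bfx \\
&= -\frac{1}{\tau}\int_\Omega (r-r_{k-1})\Phi\,\ud\bfx - \int_\Omega g(r)\cdot\nabla\Phi\,\ud\bfx =: F(\Phi),
\end{align*}
for all $\Phi\in H^1(\Omega)$. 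Since $n(r)\geq 0$ on $\mathcal S$ and $\tau>0$, the form $a$ is continuous and coercive on $H^1(\Omega)$ with $a(\tilde u,\tilde u)\geq \tau\|\tilde u\|_{H^1}^2$, while $F$ is a bounded linear functional because $r\in L^\infty(\Omega)$ (so that $r-r_{k-1}\in L^2(\Omega)$) and $g(r)=\tfrac{\varepsilon_2^2 rb}{1+\varepsilon_1 r}\nabla b\in L^2(\Omega)$ by the hypothesis $\nabla b\in L^2(\Omega)$. Lax--Milgram then yields a unique $\tilde u\in H^1(\Omega)$, which defines a solution operator $T:L^2(\Omega)\to L^2(\Omega)$ by $T(v)=\tilde u$. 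From $\tau\|\tilde u\|_{H^1}^2\leq \|F\|_{(H^1)'}\|\tilde u\|_{H^1}$ together with the uniform $L^\infty$-bound on $r\in\mathcal S^\circ$, there exists a constant $M=M(\tau,|\Omega|,\|r_{k-1}\|_{L^2},\|\nabla b\|_{L^2},\varepsilon_1,\varepsilon_2)$ such that $T$ maps the closed convex ball $\overline{B_M}\subset L^2(\Omega)$ into itself and factors through the compact embedding $H^1(\Omega)\hookrightarrow L^2(\Omega)$, so that $T$ is compact on $\overline{B_M}$.

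To close the argument it remains to verify the continuity of $T$ on $L^2(\Omega)$ and invoke Schauder's theorem. For any sequence $v_n\to v$ in $L^2(\Omega)$, pass to an a.e.\ convergent subsequence; continuity of $\tilde h'^{-1}$ together with the uniform bound on $\mathcal S^\circ$ and dominated convergence yield $r_n\to r$, $n(r_n)\to n(r)$ and $g(r_n)\to g(r)$ in $L^p(\Omega)$ for every $p<\infty$. The a priori $H^1$-bound then produces a weakly convergent subsequence $\tilde u_n\rightharpoonup \tilde u^*$ in $H^1(\Omega)$, and passing to the limit in the linear weak formulation (strong convergence of the coefficients against the weakly convergent $\nabla\tilde u_n$) identifies $\tilde u^* = T(v)$; uniqueness upgrades this to full convergence in $L^2(\Omega)$. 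Schauder's fixed-point theorem then furnishes a fixed point $\tilde u_k$, and setting $r_k:=\tilde h'^{-1}(\tilde u_k)\in\mathcal S^\circ\subset L^2(\Omega)$ completes the proof. The only delicate step is the continuity verification, since it requires passing the nonlinear coefficients $n(r_n)$ and $g(r_n)$ to the limit against weakly convergent gradients; this is handled routinely by the strong $L^q$-convergence of the coefficients, itself a by-product of the pointwise $L^\infty$-bound inherited from $\mathcal S^\circ$.
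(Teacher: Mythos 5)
Your proof is correct and follows essentially the same strategy as the paper: freeze the nonlinear coefficients to obtain a linear problem, invoke Lax--Milgram (with the $\tau$-regularization providing coercivity on $H^1(\Omega)$), and then close via Schauder's fixed-point theorem using Lemma~\ref{lemma1}, the compact embedding $H^1(\Omega)\hookrightarrow L^2(\Omega)$, and a strong-coefficient/weak-gradient pairing argument for continuity. The one cosmetic difference is that you iterate on the entropy variable $\tilde u \in L^2(\Omega)$ (mapping a large ball $\overline{B_M}$ to itself via the $\tau$-uniform $H^1$ bound), whereas the paper iterates on the density variable, defining $U:\mathcal{S}\to\mathcal{S}$, $\tilde r \mapsto \tilde h'^{-1}(\tilde u)$, which maps $\mathcal{S}$ to itself automatically since $\tilde h'^{-1}$ takes values in $\mathcal{S}^\circ$; both are valid, and the paper's choice makes the invariant convex set immediate without tracking the constant $M$.
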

\begin{proof}
In order to perform a fixed point argument
we define $U:\mathcal{S}\subseteq L^2(\Omega)\to\mathcal{S}\subseteq L^2(\Omega), \tilde{r} \mapsto r=\tilde h'^{-1}(\tilde{u})$, where
$\tilde{u}$ is the unique solution in $H^1(\Omega)$ to the linear problem
\begin{equation} \label{equ_lax} 
a(\tilde{u},\Phi)=F(\Phi) \qquad \text{for all} \qquad \Phi \in H^1(\Omega),
\end{equation}
with 
\begin{align*}
a(\tilde{u},\Phi)&=\int_{\Omega}\nabla \Phi^T\, n(\tilde{r}) \nabla \tilde{u}\,\ud \bfx +\tau R(\Phi,\tilde{u}),\\
F(\Phi)&=-\frac{1}{\tau}\int_\Omega \left[( \tilde{r}- r_{k-1}) \Phi -g(\tilde{r}) \nabla \Phi\right] \ud \bfx.
\end{align*}
The bilinear form $a:H^1(\Omega)\times H^1(\Omega)\to \mathbb{R}$ and the linear functional $F:H^1(\Omega)\to \mathbb{R}$ are bounded. Moreover, $a$ is coercive since the positivity of $n(r)$ implies that
\begin{align*}
a(\tilde{u},\tilde{u}) =\int_{\Omega}\nabla \tilde{u}^T\, n(\tilde{r}) \nabla \tilde{u}\,\ud \bfx +\tau R(\tilde{u},\tilde{u})\geq \tau\|\tilde{u}\|_{H^1(\Omega)}^2.
\end{align*}
Then the lemma of Lax--Milgram guarantees the existence of a unique solution $\tilde{u}\in H^1(\Omega)$ to \eqref{equ_lax}.

To apply Schauder's fixed point theorem, we need to show that the map $U$:
\begin{enumerate}[(1)]
\item  sends a convex, closed set onto itself,
\item  is compact,
\item  is continuous.
\end{enumerate}
For the continuity, let $\tilde{r}_k$ be a sequence in $\mathcal{S}$ converging strongly to $\tilde{r}$ in $L^2(\Omega)$ and let $\tilde{u}_k$ be the corresponding unique solution to \eqref{equ_lax} in $H^1(\Omega)$. Due to the structure of $n$, we also have that $n(\tilde{r}_k)\to n(\tilde{r})$ strongly in $L^2(\Omega)$. The positivity of $n$ for $ r\in \mathcal{S}$ provides a uniform bound for $\tilde{u}_k$ in $H^1(\Omega)$. Hence, there exists a subsequence with $\tilde{u}_k\rightharpoonup \tilde{u}$ weakly in $H^1(\Omega)$. The $L^{\infty}$ bounds of $n(\tilde{r}_k)$ and the application of a density argument allow us to pass from test functions $\Phi\in W^{1,\infty}(\Omega)$ to test functions $\Phi\in H^1(\Omega)$. So, the limit $\tilde{u}$ as the solution of problem \eqref{equ_lax} with coefficients $\tilde{r}$ is well defined. 
Due to the compact embedding $H^1(\Omega)\hookrightarrow L^2(\Omega)$, we have a strongly converging subsequence of $\tilde{u}_k$ in $L^2(\Omega)$. Since the limit is unique, the whole sequence converges. From Lemma \ref{lemma1} we know that $ r=h'^{-1}(\tilde{u})$ is Lipschitz continuous, which yields continuity of $U$.
Since $\mathcal{S}$ is convex and closed, property (1) is satisfied and (2) follows from the compact embedding $H^1(\Omega)\hookrightarrow L^2(\Omega)$. 
Hence, we can apply Schauder's fixed point theorem, which assures the existence of a solution $r\in \mathcal{S}$ to  \eqref{equ_lax} with $\tilde{r}$ replaced by $r$.
\end{proof}

\subsection{Entropy dissipation.}
In this subsection we show an entropy dissipation relation for the time discretization \eqref{pme_reg2}. As the entropy density $\tilde{h}$ is convex, we have $\tilde{h}(\varphi_1)-\tilde{h}(\varphi_2)\leq \tilde{h}'(\varphi_1)\cdot(\varphi_1-\varphi_2)$ for all $\varphi_1,\varphi_2\in\mathcal{S}$. If we choose $\varphi_1=r_k$ and $\varphi_2= r_{k-1}$ and using $\tilde{h}'( r_k)= \tilde{u}_k$, we obtain
\begin{align}\label{inequ1}
\frac{1}{\tau}\int_{\Omega}&
 (r_k- r_{k-1})
 \tilde{u}_k\,\ud \bfx\geq\frac{1}{\tau}\int_{\Omega} \left[
\tilde{h}( r_k)-\tilde{h}( r_{k-1})
\right] \ud \bfx.
\end{align}
Inserting \eqref{inequ1} in equation \eqref{pme_reg2} with the test function $\Phi=\tilde{u}_k$ leads to
\begin{equation}
\label{inequ2}
\int_{\Omega}\tilde{h}( r_k)\,\ud \bfx+\tau \! \! \int_{\Omega} \left[ \nabla \tilde{u}_k^T\, n(r_k) \nabla \tilde{u}_k +g(r_k) \nabla \tilde{u}_k\right] \ud \bfx+\tau ^2 R(\tilde{u}_k,\tilde{u}_k)\leq \int_{\Omega}\tilde{h}( r_{k-1})\,\ud \bfx.
\end{equation}

\begin{lemma}\label{lemma2}
Let $ b :\Omega \rightarrow [0,1/\varepsilon_2)$ be such that $\nabla b \in L^2(\Omega)$, $n$ and $g$ as above and let $r \in L^2(\Omega)$, $r \in \mathcal{S}^\circ$ a.e. be such that $\tilde u = \tilde h'(r) \in H^1(\Omega)$. Then there exists a constant $C \geq 0$ depending only on $\varepsilon_1$, $\varepsilon_2$ and $\Vert \nabla b \Vert_{L^2(\Omega)}$ such that \begin{multline} \label{entropyinequality}
\int_\Omega \big[\nabla \tilde u^T n(r) \nabla \tilde u + g(r) \nabla \tilde u 
\big]\,\ud \bfx  \\  \geq \int_{\Omega} \left[\frac{\varepsilon_1}{4}|\nabla  r|^2+\frac{\tau^2}{2} \frac{ \varepsilon_1^3  r^2}{(1-\varepsilon_1 r-\varepsilon_2 b)^2}|\varepsilon_1 \nabla r+\varepsilon_2 \nabla b|^2\right] \ud \bfx - C. 
\end{multline}
\end{lemma}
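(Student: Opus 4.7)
My plan is to bound the integrand by decomposing $\nabla \tilde u$, isolating the contributions of order $\tau^0$, $\tau$, and $\tau^2$, and applying Young's inequality to the $\nabla r \cdot \nabla b$ cross terms. First I would simplify
\begin{equation*}
n(r) = r(1-\varepsilon_2 b) + \frac{\varepsilon_1 \varepsilon_2 r^2 b}{1+\varepsilon_1 r} = \frac{r(1+\varepsilon_1 r - \varepsilon_2 b)}{1+\varepsilon_1 r},
\end{equation*}
set $a = 1-\varepsilon_1 r - \varepsilon_2 b > 0$ (since $r\in\mathcal{S}^\circ$ a.e.), and write $\nabla\tilde u = \nabla u_1 + \tau\varepsilon_1 w$ with $w = (\varepsilon_1 \nabla r + \varepsilon_2 \nabla b)/a$. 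This yields the splitting
\begin{equation*}
n|\nabla \tilde u|^2 + g\cdot\nabla\tilde u = J\cdot\nabla u_1 + \tau\varepsilon_1(2n\nabla u_1 + g)\cdot w + \tau^2\varepsilon_1^2 n|w|^2,
\end{equation*}
where $J = (1+\varepsilon_1 r - \varepsilon_2 b)\nabla r + \varepsilon_2 r \nabla b$ is the flux of the original AGF \eqref{pme}.

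The leading term $J\cdot\nabla u_1$ is the formal entropy dissipation of the AGF. A direct expansion, combined with the coercivity bound $(1+\varepsilon_1 r - \varepsilon_2 b)/r \geq 2\varepsilon_1$ (which follows from $\varepsilon_1 r + \varepsilon_2 b \leq 1$ in $\mathcal{S}$) and Young's inequality applied to the $\nabla r \cdot \nabla b$ cross term, gives $J \cdot \nabla u_1 \geq \varepsilon_1 |\nabla r|^2 - C_1 |\nabla b|^2$ pointwise, which provides the target $\frac{\varepsilon_1}{4}|\nabla r|^2$ with three-quarters of slack. For the quadratic-in-$\tau$ term, $n(r) \geq \varepsilon_1 r^2$ in $\mathcal{S}$ (from $\beta := 1+\varepsilon_1 r - \varepsilon_2 b \geq 2\varepsilon_1 r$ and $\gamma := 1+\varepsilon_1 r \leq 2$) yields
\begin{equation*}
\tau^2\varepsilon_1^2 n |w|^2 \geq \frac{\tau^2\varepsilon_1^3 r^2}{a^2}\,|\varepsilon_1 \nabla r + \varepsilon_2 \nabla b|^2,
\end{equation*}
which is twice the required contribution, leaving a factor $1/2$ in reserve.

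The main obstacle is the linear-in-$\tau$ mixed term, whose coefficients carry the factor $1/a$ and hence become singular near the packing constraint. Writing $2n\nabla u_1 + g = 2J - g$ and expanding in $(\nabla r, \nabla b)$, the pure $|\nabla r|^2$ and $|\nabla b|^2$ parts are non-negative (and can be discarded), so only the $\nabla r \cdot \nabla b$ piece needs genuine control. I would apply Young's inequality with weights scaled by $\tau\varepsilon_1/a$: the $|\nabla r|^2$ portion of the cross term is absorbed into the $\frac{3\varepsilon_1}{4}|\nabla r|^2$ slack from the leading-order estimate, while the $|\nabla b|^2$ portion is balanced against the reserved half of the $\tau^2$ term, so that the factor $\tau\varepsilon_1/a$ pairs with $\tau^2\varepsilon_1^2/a^2$ and produces a ratio bounded in $\tau/a$ rather than an uncontrollable $|\nabla b|^2$ weight. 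After integration, the cumulative $|\nabla b|^2$ contributions are bounded by $\|\nabla b\|_{L^2(\Omega)}^2$ times a constant depending only on $\varepsilon_1$, $\varepsilon_2$, and $c_1$, giving the stated additive constant $C$.
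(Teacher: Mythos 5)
Your plan correctly identifies the three ingredients ($\tau^0$, $\tau^1$, $\tau^2$) coming from $\nabla\tilde u = \nabla u_1 + \tau\varepsilon_1 w$, and the bounds on the outer two pieces are fine: writing $n(r) = r(1+\varepsilon_1 r - \varepsilon_2 b)/(1+\varepsilon_1 r)$ and using $1+\varepsilon_1 r - \varepsilon_2 b \geq 2\varepsilon_1 r$ together with $1+\varepsilon_1 r \leq 2$ gives $n\geq\varepsilon_1 r^2$; the leading estimate $J\cdot\nabla u_1 \geq \varepsilon_1|\nabla r|^2 - C_1|\nabla b|^2$ also checks out. But the handling of the $\tau$-linear mixed term has a genuine gap, and it is precisely the step you flag as the ``main obstacle.''

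The $\nabla r\cdot\nabla b$ cross piece of $\tau\varepsilon_1(2J-g)\cdot w$ has a coefficient of the form $\tau\varepsilon_1 K(r,b)/a$ with $a = 1-\varepsilon_1 r - \varepsilon_2 b$ and $K$ bounded but \emph{not} vanishing as $a\to 0$ (near the packing constraint $K\sim 6\varepsilon_1\varepsilon_2 r$ and $r$ is bounded \emph{below} there, so $K/a \sim 1/a$). Any Young split of this term produces a $|\nabla b|^2$ residual whose coefficient contains $\tau^2/a^2$. Your plan is to absorb this residual into the reserved half of the $\tau^2$ term, but that reserve is $\tfrac{\tau^2\varepsilon_1^3 r^2}{2a^2}|v|^2$ with $v = \varepsilon_1\nabla r + \varepsilon_2\nabla b$: it carries the \emph{wrong} vector, $|v|^2$ rather than $|\nabla b|^2$ (these are not comparable, since $v$ can vanish while $\nabla b$ does not), and there is no $r$- or $a$-uniform way to trade one for the other. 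Consequently the residual cannot be pushed into the additive constant $C$, which by the statement must depend only on $\varepsilon_1,\varepsilon_2$ and $\|\nabla b\|_{L^2}$; the absorption you describe leaves an uncontrolled singular weight on $|\nabla b|^2$. The ratio ``bounded in $\tau/a$'' you invoke is not actually bounded: $\tau$ and $a$ are independent small quantities.

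The paper avoids this entirely by a sharper decomposition. It first uses Young to absorb $g\cdot\nabla\tilde u$ into the $\tfrac{\varepsilon_1\varepsilon_2 r^2 b}{1+\varepsilon_1 r}|\nabla\tilde u|^2$ part of $n|\nabla\tilde u|^2$, leaving $r(1-\varepsilon_2 b)|\nabla\tilde u|^2 - C|\nabla b|^2$; it then splits $r(1-\varepsilon_2 b) = r a + \varepsilon_1 r^2$ and discards the nonnegative $ra|\nabla\tilde u|^2$. The remaining term is written as
\begin{equation*}
\varepsilon_1 r^2|\nabla\tilde u|^2 = \varepsilon_1\left|\left(\tfrac{1}{\varepsilon_1}+r+\tfrac{\tau\varepsilon_1 r}{a}\right)v - \tfrac{\varepsilon_2}{\varepsilon_1}\nabla b\right|^2,
\end{equation*}
so the singular factor $\tau/a$ multiplies only $v$, and the coefficient of $\nabla b$ is the constant $\varepsilon_2/\varepsilon_1$. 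Young applied to the cross term $v\cdot\nabla b$ therefore leaves a $|\nabla b|^2$ residual with coefficient $\varepsilon_2^2/\varepsilon_1$, independent of $r$, $a$, and $\tau$; the half of the $A^2|v|^2$ term retained then dominates both $\tfrac{1}{2\varepsilon_1}|v|^2$ (which yields $\tfrac{\varepsilon_1}{4}|\nabla r|^2$ minus another bounded $|\nabla b|^2$ correction) and $\tfrac{\tau^2\varepsilon_1^3 r^2}{2a^2}|v|^2$. To repair your argument you would need to perform the analogous re-expansion, i.e.\ express $r\nabla\tilde u$ in the $(v,\nabla b)$ basis rather than expanding $(2J-g)\cdot w$ in $(\nabla r,\nabla b)$, so that all singular weights accompany $v$ and the Young residual on $|\nabla b|^2$ is bounded.
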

\begin{proof}
Inserting the definition of $n$ and $g$ we have
\begin{align*}
 \int_{\Omega} \left[ \nabla \tilde{u}^T n(r) \nabla \tilde{u}+ g(r) \nabla \tilde{u}\right] \ud \bfx  =\int_{\Omega}  \left[ r (1 -\varepsilon_2  b) |\nabla \tilde{u}|^2+\frac{\varepsilon_1\varepsilon_2r^2 b}{1+\varepsilon_1 r} |\nabla \tilde{u}|^2+\frac{ \varepsilon_2^2 rb}{1+\varepsilon_1 r}\nabla b \nabla \tilde{u}\right] \ud \bfx.
\end{align*}
Using Young's inequality we deduce that
\begin{align*}
- \frac{ \varepsilon_2^2 rb}{1+\varepsilon_1 r}\nabla b \nabla \tilde{u}\leq \frac{\varepsilon_1\varepsilon_2r^2 b}{1+\varepsilon_1 r} |\nabla \tilde{u}|^2+\frac{\varepsilon_2^3 b}{4\varepsilon_1(1+\varepsilon_1 r)} |\nabla b|^2 \leq  \frac{\varepsilon_1\varepsilon_2r^2 b}{1+\varepsilon_1 r} |\nabla \tilde{u}|^2+\frac{\varepsilon_2^2}{4\varepsilon_1 } |\nabla b|^2.
\end{align*}
Since
\begin{align*}
r (1-\varepsilon_2 b) |\nabla \tilde{u}|^2=r (1 - \varepsilon_1 r-\varepsilon_2 b) |\nabla \tilde{u}|^2+\varepsilon_1 r^2 |\nabla \tilde{u}|^2,
\end{align*}
we obtain
\begin{align*}
\int_{\Omega} \left[ \nabla \tilde{u}^T n(r) \nabla \tilde{u}+ g(r) \nabla \tilde{u}\right] \ud \bfx \geq \int_{\Omega} \varepsilon_1 r^2 |\nabla \tilde{u}|^2 \, \ud \bfx - C_1,
\end{align*}
for some constant $C_1\geq 0$.
Using the definition of $\tilde{u}$ \eqref{u_reg}, Young's inequality to estimate the mixed terms, and
\begin{equation*}
\varepsilon_1 r^2 |\nabla \tilde{u}|^2 =\varepsilon_1 \left|(\varepsilon_1 \nabla r+\varepsilon_2\nabla b)\left( \frac{1}{\varepsilon_1}+r+\tau \frac{r\varepsilon_1}{1-\varepsilon_1 r-\varepsilon_2 b}\right)-\frac{\varepsilon_2}{\varepsilon_1}\nabla b\right|^2,
\end{equation*}
gives
\begin{align*}
\int_{\Omega}  \big[ \nabla \tilde{u}^T n(r) \nabla \tilde{u} &+ g(r) \nabla \tilde{u}\big] \ud \bfx \\ & \geq \int_\Omega \left[ \frac{1}{2\varepsilon_1} |\varepsilon_1 \nabla r+\varepsilon_2 \nabla b|^2+\frac{\tau^2}{2} \frac{ \varepsilon_1^3  r^2}{(1-\varepsilon_1 r-\varepsilon_2 b)^2}|\varepsilon_1 \nabla r+\varepsilon_2 \nabla b|^2\right] \ud \bfx - C_2\\
&\geq \int_\Omega  \left[ \frac{\varepsilon_1}{4}| \nabla r|^2+\frac{\tau^2}{2} \frac{ \varepsilon_1^3  r^2}{(1-\varepsilon_1 r-\varepsilon_2 b)^2}|\varepsilon_1 \nabla r+\varepsilon_2 \nabla b|^2\right] \ud \bfx- C,
\end{align*}
for some constants $C_2,C\geq0$. 
\end{proof}

Applying the dissipation inequality \eqref{entropyinequality} and resolving the recursion \eqref{inequ2} yields
\begin{multline}\label{discrete_entropyinequality}
\int_{\Omega}\tilde{h}( r_k)\,\ud \bfx  +\tau\sum_{j=1}^k\int_{\Omega} \left[ \frac{\varepsilon_1}{4}|\nabla  r_j|^2+\frac{\tau^2}{2} \frac{ \varepsilon_1^3  r_j^2}{(1-\varepsilon_1 r_j-\varepsilon_2 b)^2}|\varepsilon_1 \nabla r_j+\varepsilon_2 \nabla b|^2\right] \ud \bfx\\
+\tau^2\sum_{j=1}^k R (\tilde{u}_j,\tilde{u}_j) \leq \int_{\Omega} \tilde{h}( r_0)\,\ud \bfx+T C.
\end{multline}
This entropy dissipation property is the basis for obtaining sufficient compactness that allows to pass to the limit $\tau \rightarrow 0$ in the next subsection.

\subsection{The limit \texorpdfstring{$\tau \to 0$}{t to 0}.} \label{sec:limit_tau}
In this subsection we perform the limit to a solution of the time-continuous problem. Let $r_k$ be a sequence of solutions to \eqref{pme_reg2}. We define $ r_\tau({\bf x},t)= r_k({\bf x})$ for ${\bf x}\in\Omega$ and $t\in ((k-1)\tau,k\tau]$. Then $r_\tau$ solves the following problem,
\begin{multline}\label{equ1_tau}
\int_0^T  \! \! \int_{\Omega} \left \{ \frac{1}{\tau} (r_\tau-\sigma_\tau  r_\tau)\Phi+
\left [(1+\varepsilon_1 r_\tau-\varepsilon_2 b)\nabla  r_\tau+\varepsilon_2 r_\tau \nabla b \right ]\nabla \Phi \right \} \ud \bfx\,\ud t \\
+\int_0^T  \! \!  \left[\int_{\Omega}\frac{\tau \varepsilon_1}{1-\varepsilon_1 r_\tau -\varepsilon_2 b} n(r_\tau) (\varepsilon_1\nabla r_\tau+\varepsilon_2 \nabla b) \nabla \Phi \,\ud \bfx+\tau R(\Phi,\tilde{u}_\tau)\right] \ud t=0,
\end{multline}
for $\Phi \in L^2(0,T;H^1(\Omega))$. Here $\sigma_\tau$ denotes a shift operator, that is, $(\sigma_\tau  r_\tau)({\bf x},t)= r_\tau ({\bf x},t-\tau)$ for $\tau \leq t\leq T$.
Note that the terms in the second line of \eqref{equ1_tau} are the regularization terms. Using \eqref{equ1_tau}, the inequality \eqref{discrete_entropyinequality} becomes
\begin{multline*}
\int_{\Omega}\tilde{h}( r_\tau(T))\,\ud \bfx + \int_0^T \! \! \int_{\Omega} \left[ \frac{\varepsilon_1}{4}|\nabla  r_\tau|^2+\frac{\tau^2}{2} \frac{ \varepsilon_1^3  r_\tau^2}{(1-\varepsilon_1 r_\tau-\varepsilon_2 b)^2}|\varepsilon_1 \nabla r_\tau+\varepsilon_2 \nabla b|^2\right] \ud \bfx\,\ud t\\
+\tau\int_0^TR (\tilde{u}_\tau,\tilde{u}_\tau)\,\ud t \leq \int_{\Omega} \tilde{h}( r_0)\,\ud \bfx+T C,
\end{multline*}
which leads to the \emph{a priori} estimates in Lemma \ref{lemma3} below. 
\begin{lemma}[\emph{a priori} estimates]\label{lemma3}
There exists a constant $K\in\mathbb{R}^+$ (independent of $\tau$), such that the following bounds hold:
\begin{align}
\| r_\tau\|_{L^2(0,T;H^1(\Omega))}&\leq K,  \label{apriori1}\\
\tau \left\|\frac{ r_\tau}{1-\varepsilon_1 r_\tau-\varepsilon_2 b}(\varepsilon_1 \nabla r_\tau+\varepsilon_2 \nabla b)\right\|_{L^2(\Omega_T)}&\leq K,  \label{apriori2}\\
\sqrt{\tau}\|\tilde{u}_\tau\|_{L^2(0,T;H^1(\Omega))}&\leq K,\label{apriori3}
\end{align}
where $\Omega_T=\Omega \times (0,T)$.
\end{lemma}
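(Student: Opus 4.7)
The plan is to extract all three estimates directly from the discrete entropy dissipation inequality \eqref{discrete_entropyinequality} rewritten in terms of $r_\tau$ and $\tilde u_\tau$. Every term on its left-hand side is nonnegative except $\int_\Omega \tilde h(r_\tau(T))\,\ud\bfx$, so once I control that entropy term from below and the right-hand side from above, both uniformly in $\tau$, each of the three remaining terms yields one of the bounds \eqref{apriori1}--\eqref{apriori3}.

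First I would secure the two uniform bounds on $\tilde h$. On the right, $\int_\Omega \tilde h(r_0)\,\ud\bfx$ is finite because the hypothesis $E_1(r_0)<\infty$ handles the first three terms of $\tilde h$, and the $\tau$-regularization contribution $\tau(1-\varepsilon_1 r_0-\varepsilon_2 b)[\log(1-\varepsilon_1 r_0-\varepsilon_2 b)-1]$ is bounded on $\mathcal S^\circ$ since the map $s\mapsto s(\log s-1)$ is bounded on $[0,1]$. For the lower bound on the entropy term, I would use that $r_\tau\in\mathcal S$ pointwise, so that $r_\tau\in[0,1/\varepsilon_1]$ and $1-\varepsilon_1 r_\tau-\varepsilon_2 b\in[0,1]$; applying the universal inequality $s(\log s-1)\ge -1$ (valid for all $s\ge 0$) to both arguments then yields $\tilde h(r_\tau(T))\ge -1-\tau$ pointwise, which is uniform as $\tau\to 0$.

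With those in hand, each bound drops out term by term. Estimate \eqref{apriori3} follows immediately from the identity $\tau\int_0^T R(\tilde u_\tau,\tilde u_\tau)\,\ud t = \tau\|\tilde u_\tau\|_{L^2(0,T;H^1(\Omega))}^2$. Estimate \eqref{apriori2} is obtained by recognizing that the second integrand on the left of \eqref{discrete_entropyinequality} equals $\varepsilon_1^3/2$ times the square of the quantity bounded in \eqref{apriori2} (with the factor $\tau$ inside the $L^2(\Omega_T)$-norm absorbing the $\tau^2$ prefactor); taking square roots gives the claim. For \eqref{apriori1}, the $L^2(\Omega_T)$ bound on $\nabla r_\tau$ is read off the $\tfrac{\varepsilon_1}{4}|\nabla r_\tau|^2$ contribution, and the $L^2(\Omega_T)$ bound on $r_\tau$ itself is supplied independently by the pointwise bound $r_\tau\le 1/\varepsilon_1$ (from $r_\tau\in\mathcal S$) via $\|r_\tau\|_{L^2(\Omega_T)}^2\le T|\Omega|/\varepsilon_1^2$; together these give the full $L^2(0,T;H^1(\Omega))$ bound.

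I do not anticipate a substantial obstacle: the lemma is essentially bookkeeping once \eqref{discrete_entropyinequality} is in place, and all constants depend only on $\varepsilon_1,\varepsilon_2,\|\nabla b\|_{L^2(\Omega)},T,|\Omega|$ and $E_1(r_0)$, hence are independent of $\tau$. The most delicate point is the uniform lower bound on $\tilde h(r_\tau(T))$: one must verify that the extra $\tau$-regularization of the entropy density does not smuggle in a $\tau$-dependence that degenerates as $\tau\to 0$, which is precisely why the crude $s(\log s-1)\ge -1$ is the right estimate to invoke rather than anything finer.
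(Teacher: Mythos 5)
Your proof is correct and follows precisely the route the paper intends but leaves implicit: the paper states only that the rewritten discrete entropy inequality ``leads to the \emph{a priori} estimates in Lemma~\ref{lemma3}'' and provides no explicit derivation, so your job was to fill in the bookkeeping, which you do cleanly. The one point that genuinely requires care is the uniform (in $\tau$) lower bound on $\int_\Omega \tilde h(r_\tau(T))\,\ud\bfx$, and you handle it correctly with the pointwise bound $s(\log s -1)\ge -1$ applied to both the $r$- and the $(1-\varepsilon_1 r-\varepsilon_2 b)$-arguments, giving $\tilde h\ge -(1+\tau)$ on $\mathcal S$; the nonnegativity of the quadratic and $\varepsilon_2 rb$ terms makes the rest of the lower bound trivial. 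The remaining three bounds then drop out term by term exactly as you describe, with the $L^2(\Omega_T)$ control on $r_\tau$ itself supplied by the pointwise constraint $0\le r_\tau\le 1/\varepsilon_1$ from $\mathcal S$, and the identity $R(\tilde u_\tau,\tilde u_\tau)=\|\tilde u_\tau\|_{H^1(\Omega)}^2$ giving \eqref{apriori3} directly.
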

\begin{lemma} \label{lemma4}
There exists a constant $K\in\mathbb{R}^+$ (independent of $\tau$), such that the discrete time derivative of $ r_\tau$ is uniformly bounded, that is,
\begin{align}\label{inequ3}
\frac{1}{\tau}\| r_\tau-\sigma_\tau  r_\tau\|_{L^2(0,T;H^1(\Omega)')}&\leq K. 
\end{align}
\end{lemma}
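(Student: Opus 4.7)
The plan is to estimate the $L^2(0,T;H^1(\Omega)')$-norm of $\tau^{-1}(r_\tau-\sigma_\tau r_\tau)$ by dualizing against an arbitrary test function $\Phi\in L^2(0,T;H^1(\Omega))$ and reading off its action from the discrete weak formulation \eqref{equ1_tau}. Rearranging \eqref{equ1_tau} gives
\begin{align*}
\int_0^T\!\!\int_\Omega \tfrac{1}{\tau}(r_\tau-\sigma_\tau r_\tau)\Phi\,\ud\bfx\,\ud t
&= -\int_0^T\!\!\int_\Omega\bigl[(1+\varepsilon_1 r_\tau-\varepsilon_2 b)\nabla r_\tau+\varepsilon_2 r_\tau\nabla b\bigr]\cdot\nabla\Phi\,\ud\bfx\,\ud t\\
&\quad -\int_0^T\!\!\int_\Omega \frac{\tau\varepsilon_1 n(r_\tau)}{1-\varepsilon_1 r_\tau-\varepsilon_2 b}(\varepsilon_1\nabla r_\tau+\varepsilon_2\nabla b)\cdot\nabla\Phi\,\ud\bfx\,\ud t\\
&\quad -\tau\int_0^T R(\Phi,\tilde u_\tau)\,\ud t,
\end{align*}
so it suffices to bound each of the three right-hand pieces by a $\tau$-independent multiple of $\|\Phi\|_{L^2(0,T;H^1(\Omega))}$.

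For the first (AGF) flux, I would use that the constraint $r_\tau\in\mathcal S$ forces $(1+\varepsilon_1 r_\tau-\varepsilon_2 b)$ and $r_\tau$ to be bounded in $L^\infty(\Omega_T)$, so Cauchy--Schwarz combined with \eqref{apriori1} and the assumption $\nabla b\in L^2(\Omega)$ yields the desired bound. For the second (regularization) flux, observe that $n(r_\tau)\le C\,r_\tau$ uniformly on $\mathcal S$, since $n(r)/r=(1-\varepsilon_2 b)+\varepsilon_1\varepsilon_2 r b/(1+\varepsilon_1 r)$ is bounded there. Hence this piece is controlled by a constant multiple of
\begin{equation*}
\tau\left\|\frac{r_\tau}{1-\varepsilon_1 r_\tau-\varepsilon_2 b}(\varepsilon_1\nabla r_\tau+\varepsilon_2\nabla b)\right\|_{L^2(\Omega_T)}\|\nabla\Phi\|_{L^2(\Omega_T)},
\end{equation*}
and the first factor is bounded by $K$ thanks to \eqref{apriori2}. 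Finally, for the $\tau R(\Phi,\tilde u_\tau)$ term, Cauchy--Schwarz gives $|R(\Phi,\tilde u_\tau)|\le\|\Phi\|_{H^1(\Omega)}\|\tilde u_\tau\|_{H^1(\Omega)}$, so after integrating in time and invoking \eqref{apriori3} we obtain the bound $\sqrt\tau\,K\,\|\Phi\|_{L^2(0,T;H^1(\Omega))}$, which is even $o(1)$ in $\tau$.

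Summing these three estimates and taking the supremum over $\|\Phi\|_{L^2(0,T;H^1(\Omega))}=1$ yields \eqref{inequ3}. The main obstacle I anticipate is the regularization flux, where the explicit $\tau$ prefactor must cancel against the implicit $\tau^{-1}$ growth already absorbed into \eqref{apriori2}; once this matching is recognized, the remaining estimates are standard consequences of Cauchy--Schwarz and the $\mathcal S$-valued nature of $r_\tau$. This uniform bound on the discrete time derivative, combined with the $H^1$-bound from \eqref{apriori1}, then places us in the scope of the Aubin--Lions lemma needed to extract a strongly converging subsequence in the limit $\tau\to 0$.
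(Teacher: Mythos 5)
Your proof is correct and follows essentially the same route as the paper: dualize against $\Phi$ using \eqref{equ1_tau}, split into the AGF flux, the regularization flux, and the $\tau R$ term, then bound each using the $\mathcal{S}$-valuedness of $r_\tau$, the bound $n(r_\tau)\le C r_\tau$, and the \emph{a priori} estimates \eqref{apriori1}--\eqref{apriori3}. Your explicit observation that the $\tau$ prefactor on the regularization flux is exactly what makes \eqref{apriori2} applicable is the same matching the paper relies on.
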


\begin{proof}
Let $\Phi \in L^2(0,T;H^1(\Omega))$. Using Lemma \ref{lemma3} gives  
\begin{align*}
\frac{1}{\tau}\int_0^T \langle  r_\tau-\sigma_\tau  r_\tau,\Phi\rangle\,\ud t =&  -\int_0^T \int_{\Omega}\left[(1+\varepsilon_1 r_\tau-\varepsilon_2 b)\nabla  r_\tau+\varepsilon_2 r_\tau \nabla b\right] \nabla \Phi\,\ud \bfx\,\ud t\\
&-\tau \varepsilon_1\int_0^T \int_{\Omega} \frac{n(r_\tau)}{1-\varepsilon_1 r_\tau-\varepsilon_2 b}(\varepsilon_1 \nabla r_\tau+\varepsilon_2 \nabla b)\nabla \Phi\,\ud \bfx\,\ud t\\
&-\tau \int_0^T  \int_{\Omega}\left( \tilde{u}_\tau\Phi+ \nabla \tilde{u}_\tau\cdot\nabla \Phi\right) \ud \bfx\,\ud t\\
\leq & \, \|(1+\varepsilon_1 r_\tau-\varepsilon_2 b)\|_{L^{\infty}(\Omega_T)}\|\nabla  r_\tau\|_{L^2(\Omega_T)}\|\nabla \Phi\|_{L^2(\Omega_T)}\\
&+\varepsilon_2\| r_\tau \nabla b \|_{L^{\infty}(\Omega_T)}\|\nabla \Phi\|_{L^2(\Omega_T)}\\
&+\tau  \varepsilon_1 \left\|\frac{n(r_\tau)}{1-\varepsilon_1 r_\tau-\varepsilon_2 b} (\varepsilon_1 \nabla r_\tau+\varepsilon_2 \nabla b)\right\|_{L^2(\Omega_T)}\|\nabla \Phi\|_{L^2(\Omega_T)}\\
&+\tau \|\tilde{u}_\tau\|_{L^2(0,T;H^1(\Omega))}\|\Phi\|_{L^2(0,T;H^1(\Omega))}\\
\leq & \,  K\|\Phi\|_{L^2(0,T;H^1(\Omega))},
\end{align*}
as required.
\end{proof}

From Lemmas \ref{lemma3} and \ref{lemma4} we have that $r_\tau\in L^2(0,T;H^1(\Omega))$ and $\frac{1}{\tau}(r_\tau-\sigma_\tau  r_\tau)\in L^2(0,T;H^1(\Omega)')$. Then we can use Aubin--Lions lemma to deduce the existence of a subsequence, also denoted by $r_\tau$, such that, as $\tau\to 0$,
\begin{equation*}
r_\tau \to r \qquad \text{strongly in} \qquad L^2(0,T;L^2(\Omega)).
\end{equation*}
Even though the \emph{a priori} estimates from Lemma \ref{lemma3} are enough to get boundedness for all terms in \eqref{equ1_tau} in $L^2(\Omega_T)$, the compactness results are not enough to identify the correct limits as $\tau \to 0$. In particular, the \emph{a priori} estimates from Lemma \ref{lemma3} and the strong convergence of $r_\tau$ allow us to pass to the correct limit in all the terms except the one resulting from the entropy regularization.
Together with Lemma \ref{lemma3}, we obtain a solution to
\begin{align}
- \int_0^T \! \int_{\Omega}  r \partial_t \Phi \,\ud \bfx\,\ud t = \int_0^T \! \int_{\Omega}J_r \nabla \Phi\,\ud \bfx\,\ud t,
\end{align} 
where 
\begin{align}
&(1+\varepsilon_1 r_\tau-\varepsilon_2 b)\nabla  r_\tau+\varepsilon_2 r_\tau \nabla b+\frac{\tau \varepsilon_1}{1-\varepsilon_1 r_\tau -\varepsilon_2 b} n(r_\tau) (\varepsilon_1\nabla r_\tau+\varepsilon_2 \nabla b)\rightharpoonup J_r \label{limit1},
\end{align}
weakly in  $L^2(\Omega_T)$.
In order to pass to the correct limits in all the terms, we multiply equation \eqref{limit1} by $(1-\varepsilon_1 r_\tau-\varepsilon_2 b)$. Then we obtain that for $\tau\to 0$, we have
\begin{align*}
\frac{\tau \varepsilon_1}{1-\varepsilon_1 r_\tau -\varepsilon_2 b}(1-\varepsilon_1 r_\tau-\varepsilon_2 b)n(r_\tau)(\varepsilon_1\nabla r_\tau+\varepsilon_2 \nabla b)&=\tau\varepsilon_1 n(r_\tau)(\varepsilon_1\nabla r_\tau+\varepsilon_2 \nabla b)\to 0,
\end{align*}
strongly in $L^2(\Omega_T)$.
Since the entropy functional $E_1$ is convex and continuous, it is weakly lower semi-continuous. Because of the weak convergence of $r_\tau(t)$,
\begin{equation*}
\int_\Omega \tilde{h}(r(t))\,\ud \bfx\leq \liminf_{\tau\to 0}\int_\Omega \tilde{h}(r_\tau (t))\,\ud \bfx \qquad \text{for a.e.}\qquad t>0,
\end{equation*}
the limit satisfies the entropy inequality \eqref{theorem1_2}. This completes the proof of Theorem \ref{theorem1}.

\begin{remark}
Note that setting $\varepsilon_2 = 0$ in \eqref{pme}, we obtain an enhanced diffusion equation of the form
\begin{equation}
\partial_t r=\nabla \cdot [(1+\varepsilon_1 r) \nabla r],
\end{equation}
which is a GF with respect to the entropy $E(r)=\int_\Omega  \left( r \log r +\varepsilon_1 r^2/2 \right) \ud \bfx$ and mobility $m(r)=r$. If we pass to the limit $\varepsilon_2 \to 0$ in Theorem \ref{theorem1}, all terms in its proof that depend on $\varepsilon_2$ are well-defined and their limits coincide with setting $\varepsilon_2=0$ in advance, that is, they vanish. 
\end{remark}

\begin{remark}\label{remark_globalexistence}
Theorem \ref{theorem1} also applies to a simpler model, namely the GF induced by the first entropy-mobility pair in \eqref{pme_gf_mod}. In particular, the proof is analogous but simpler: comparing \eqref{pme_gf_mod} with \eqref{pme_gf2} as we do not have the second term in the mobility $n(r)$ and the additional term $g(r)$ is zero. 
\end{remark}

\section{Conclusion.} \label{sec:conclusion}

In this paper we have used the framework of gradient flows (GFs) as well as asymptotic gradient flows (AGFs) to study the existence of a nonlinear Fokker--Planck equation lacking a full gradient flow (GF) structure. The equation describes the diffusion of hard-core interacting particles through a domain with obstacles distributed according to a given porosity function. While this equation can be studied using existing techniques for nonlinear scalar equations, here we used it to showcase how GF techniques can be generalized for AGFs. In a full GF, although there may be different entropies, the equilibrium  is fully determined by its entropy and the dynamic and equilibration behavior can be understood by its interplay with the mobility. In contrast, an AGF is only a GF up to a certain order in a small parameter, and this results in additional freedom when it comes to choosing an entropy-mobility pair.

We performed numerical simulations of the original equation and three choices of AGFs, and compared their steady states and the rates of convergence to these. As an additional way to weight the qualities of each AGF, we also performed stochastic simulations of the underlying  microscopic particle system. By doing so, we could not just compare which AGF structure captures best the behavior of our Fokker--Planck equation, but more importantly, the ``true'' underlying system. We found that, depending on the ratio of self- to obstacles crowding, different AGF structures better captured the system behavior. The simulations also indicated exponential convergence to equilibrium.

We discussed how our equation admits several entropy-mobility pairs to define an AGF, and identified the advantages and disadvantages of each choice when it comes to the analysis. In particular, we found that one pair includes the natural potential term in the entropy coming from the effects of porosity gradients but lacks a bound representing the maximum allowed crowding. As a result, we had to be particularly careful in the existence proof to ensure that solutions do not exceed the maximum packing, which also led to our specific definition of weak solutions. In contrast, an alternative entropy-mobility pair included the correct bounds in the mobility and entropy, leading to better estimates and boundedness by entropy, but lacked the natural potential term in the entropy. 

This work shows that in general it is not possible to determine the entropy and GF structure of a macroscopic equation derived using asymptotic methods from an underlying particle system. We can choose different AGF structures to suit our needs (for example, better entropy dissipations, better bounds), but this will not necessarily ``select'' the AGF that brings physical insight or more accurately captures the behavior of the underlying particle system. This leads to questions such as: How can we propagate the GF structure of the particle system in the derivation, rather than trying to establish it at the level of the macroscopic Fokker--Planck equation? What defines a better AGF? Is there a systematic way to always pick it? Those will be key challenges for future research. 

\section*{Acknowledgments} 

The authors thank Ansgar J\"ungel, Daniel Matthes and Mark Peletier for their constructive comments and remarks. This work was completed during the Workshop  ``Asymptotic gradient flows in multiscale models of interacting particle systems'' that took place in Oxford in July 2017. We thank all participants of the workshop for useful discussions and L'Or\'eal UK and Ireland Fellowship For Women In Science for their financial support. H.R. acknowledges support by the Austrian Science Fund (FWF) project F 65. M.T.W. was partly supported by the Austrian Academy of Sciences \"OAW via the New Frontier's Group NST-001 and by the EPSRC via the First grant EP/P01240X/1.


\end{document}